\documentclass[journal,twoside,web]{ieeecolor}
\usepackage{generic}
\usepackage{cite}
\usepackage{amsmath,amssymb,amsfonts}
\usepackage{algorithmic}
\usepackage{graphicx}
\usepackage{algorithm,algorithmic}
\usepackage{textcomp}

\def\BibTeX{{\rm B\kern-.05em{\sc i\kern-.025em b}\kern-.08em
    T\kern-.1667em\lower.7ex\hbox{E}\kern-.125emX}}
\usepackage[english]{babel}

\usepackage{amsmath}
\usepackage{graphicx}
\usepackage[colorlinks=true, allcolors=blue]{hyperref}
\hypersetup{hidelinks=true}  

\usepackage{epstopdf}
\usepackage{amsmath} 
\usepackage{amssymb}  

\newcommand{\R}{\mathbb{R}}

\newcommand{\N}{\mathbb{N}}
\usepackage{mathtools}
\usepackage{arydshln}

\usepackage{xcolor}

\usepackage{comment}
\usepackage{algorithmic}
\usepackage{algorithm}

\usepackage{stfloats}

\makeatletter
\def\@begintheorem#1#2{%
  \@IEEEtmpitemindent\itemindent
  \topsep 0pt\rmfamily\trivlist%
  \item[]\textit{\indent #1\ #2:\ }\itemindent\@IEEEtmpitemindent
}
\def\@opargbegintheorem#1#2#3{%
  \@IEEEtmpitemindent\itemindent
  \topsep 0pt\rmfamily\trivlist%
  \item[]\textit{\indent #1\ #2\ (#3):\ }\itemindent\@IEEEtmpitemindent
}
\def\@endtheorem{\endtrivlist\unskip}
\makeatother

\newtheorem{definition}{Definition}
\newtheorem{assumption}{Assumption}
\newtheorem{lemma}{Lemma}
\newtheorem{proposition}{Proposition}
\newtheorem{theorem}{Theorem}
\newtheorem{corollary}{Corollary}
\newtheorem{remark}{Remark}
\newtheorem{example}{Example}

\usepackage[most]{tcolorbox}

\usepackage[prependcaption,colorinlistoftodos]{todonotes}

\usetikzlibrary{decorations.pathmorphing, arrows.meta, calc}

\newcommand{\subscr}[2]{#1_{\textup{#2}}}

\newcommand{\Vreg}{V_{\textup{reg}}}
\newcommand{\Freg}{F_{\textup{reg}}}
\newcommand{\ellreg}{\ell_{\textup{reg}}}

\newcommand{\diag}[1]{\mathrm{diag}\,[#1]}

\newcommand{\timeset}{\N}
\newcommand{\feedback}{k}
\newcommand{\nstate}{n}
\newcommand{\ninput}{m}

\allowdisplaybreaks

\begin{document}

\title{Regularized Model Predictive Control \\ via Contractivity and Implicit Lur'e Analysis}

\author{Ryotaro Shima, Anand Gokhale,  \IEEEmembership{Graduate Student Member, IEEE}, Alexander Davydov, \IEEEmembership{Member, IEEE}, and Francesco Bullo, \IEEEmembership{Fellow, IEEE}
\thanks{This work was supported in part by AFOSR award FA9550-22-1-0059.}
\thanks{Ryotaro Shima is with Toyota Central Research and Development Laboratories, Incorporated, 41-1, Yokomichi, Nagakute, Aichi, Japan
(e-mail: ryotaro.shima@mosk.tytlabs.co.jp).
This work was partially done while the author was at the Center for Control, Dynamical Systems, and Computation, UC Santa Barbara.}
\thanks{Anand Gokhale and Francesco Bullo are with the Center for Control, Dynamical Systems, and Computation, UC Santa Barbara, Santa Barbara, CA 93106 USA
(e-mail: anand\_gokhale@ucsb.edu; bullo@ucsb.edu).}
\thanks{Alexander Davydov was with the Center for Control, Dynamical Systems, and Computation, UC Santa Barbara, Santa Barbara, CA 93106 USA.
He is now with the Department of Mechanical Engineering, Rice University, Houston, TX 77005 USA
(e-mail: davydov@rice.edu).}}

\maketitle

\begin{abstract}
This paper develops a contraction-based stability analysis for \emph{regularized model predictive control} (MPC), whose feedback law is defined implicitly by a finite-horizon optimal control problem with an additional regularizing cost.
The proposed approach interprets regularized MPC as an \emph{implicit Lur'e system}, in which the regularizing cost perturbs the optimality conditions.
We develop a multiplier-based contraction framework for implicit Lur'e systems and derive linear matrix inequality conditions for regularized MPC with three broad classes of regularizers: convex smooth stage costs, convex closed proper stage costs, and differentiable regularizers with Lipschitz gradients.
Numerical studies on input and state soft penalties, hard input constraints, and sparsity-promoting penalties illustrate that regularization shapes closed-loop performance while retaining formal contraction-based stability guarantees.
\end{abstract}

\begin{IEEEkeywords}
Model predictive control, contraction theory, linear matrix inequality, convex optimization, Lur'e system, robust control.
\end{IEEEkeywords}

\section{Introduction}

\subsection{Context and motivation}

Model Predictive Control (MPC) is a model-based control methodology in which the control input is determined by solving a finite-horizon optimal control problem.
An attractive feature of MPC is that its cost function can be designed flexibly to reflect performance objectives.
In particular, \emph{regularizing} costs are often introduced to encode specific control objectives, such as soft constraint enforcement \cite{NMCDO-LTB:94} or sparsity promotion \cite{MG:16, TA-HK-DA-GO-SL:16, TO-CRR:17, TD-WM-VI-IH-SV:21}.
Because MPC defines its feedback law implicitly through an optimization problem, such regularizers complicate the theoretical analysis of the closed-loop behavior.

Two analysis tools are central to this paper.
The first is the \emph{Lur'e form}---linear dynamics interconnected with nonlinear perturbation operators---which enables stability analysis via multipliers, that is, quadratic inequalities satisfied by the nonlinearities.
This tool has been applied to open-loop shaping of robust MPC \cite{SY-CM-HC-FA:13, JK-RS-MAM-FA:21}, whereas Lur'e analysis of \emph{closed-loop} MPC systems has received little attention.
The second tool is \emph{contraction theory}, which characterizes stability through \emph{incremental} analysis, i.e., by studying the difference between each pair of trajectories.
This incremental viewpoint yields a unified treatment of exponential stability, exponential convergence of time-varying systems, entrainment in periodically time-varying dynamics, robustness against disturbances and time delays, and modularity and interconnection properties \cite{FB:26-CTDS, AD-VC-AG-GR-FB:23f}.

This paper develops a stability framework for regularized MPC through an implicit Lur'e approach, enabling contractivity analysis of the closed-loop regularized MPC system.
We interpret the regularizer as a perturbation operator and derive linear matrix inequalities (LMIs) for closed-loop contractivity using incremental multipliers of that perturbation.
Establishing closed-loop contractivity is valuable precisely because of the unified guarantees noted above: it certifies that the closed loop is exponentially stable to a globally unique fixed point, that this fixed point is robust to disturbances arising from modeling error or stochasticity, and that the system remains well behaved under time-varying parameters---a property that is especially useful for time-varying setpoints.


\subsection{Literature review}

The classical stability theory of MPC has been developed primarily within a Lyapunov-based framework.
One representative approach achieves monotonic decrease of the optimal value function by using a control Lyapunov function as a terminal cost; see \cite{DQM-JBR-CVR-POMS:20} for thorough reviews.
This technique is also applied to sparsity-promoting MPC \cite{MG:16}.

In robust MPC, the contraction analysis of open-loop systems is conducted in \cite{SY-CM-HC-FA:13} to ensure the constraint satisfaction under disturbances.
Such approaches are often called tube MPC or constraint tightening, and have been extended to non-quadratic contractivity \cite{JK-RS-MAM-FA:21}.

From a practical perspective, MPC designers introduce regularizing costs to tailor closed-loop performance.
For instance, soft penalties that encourage constraint satisfaction have been used for chemical plants \cite{NMCDO-LTB:94} and quadrotors \cite{DAAL-DIAL-MSF-DKDV:25}, while sparsity-promoting regularization has been used for power management \cite{TA-HK-DA-GO-SL:16}, wafer control \cite{TO-CRR:17}, and satellite control \cite{TD-WM-VI-IH-SV:21}.

In robust control, Lur'e systems are analyzed by representing the quadratic constraints satisfied by the nonlinearities through multipliers, yielding contractivity conditions expressed as LMIs; see \cite[Chapter~8]{SB-LEG-EF-VB:94}.
The multiplier theory has been extended to contraction analysis; see \cite{MG-VA-ST-DA:23} for PI control and observer design, \cite{AL-AG-NH:20} for $\ell_1$ adaptive control, \cite{AD-FB:24i} for safety filters, \cite{MF-MM-GJP:20} for safety verification of neural networks,~\cite{AG-AVP-YK-FB:26a-journal} for control of neural networks.
This direction has been pushed further into \emph{implicit} Lur'e analysis \cite{BA-MC:11, LDA-MC:13}, which has also been applied to anti-windup control design \cite{IQ-ST-GV-LZ:22} and recurrent equilibrium networks \cite{MR-RW-IRM:24}.
The recent work \cite{SH-GB-FD-DLMP:26} draws on incremental quadratic constraints to analyze the stability of receding horizon games, yielding an approach closely related to implicit Lur'e analysis in a special case; see Appendix~\ref{appendix:comparison_multipliers} for a detailed comparison with one of our results.

To the best of the authors' knowledge, the present paper is the first to bring implicit Lur'e analysis into the MPC context and, in doing so, the first to establish contraction conditions for the \emph{closed-loop MPC system}.

\subsection{Contributions}

The first contribution of this paper is to reformulate closed-loop regularized MPC systems as implicit Lur'e systems, opening a new route to analyzing their closed-loop behavior.
Building on this reformulation, the second contribution is a contraction analysis of regularized MPC, which, compared with ordinary (Lyapunov) stability, delivers the unified guarantees discussed in Section~\ref{sec:preliminaries} \cite{FB:26-CTDS, AD-VC-AG-GR-FB:23f}.

Third, to enable this analysis, we develop a contractivity theory for implicit Lur'e systems.
We construct an incremental multiplier matrix for the solution mapping of a generalized equation and derive a matrix inequality that is sufficient for closed-loop contractivity, extending the technique of \cite{LDA-MC:13} to a more general setting (see Proposition~\ref{prop:implicit_lure} and Corollary~\ref{cor:implicit_lure_contraction}).
We further provide a sufficient condition for the feasibility of this matrix inequality (Proposition~\ref{prop:contractivity_feasibility}), as well as a sufficient condition for the uniqueness of solutions to the generalized equation (Lemma~\ref{lemma:wellposedness_subdifferential} and Proposition~\ref{prop:well-posedness_contraction}).

The fourth contribution is a contractivity analysis of regularized MPC for three classes of regularizers:
convex smooth regularizers (Theorem~\ref{thm:contractivity_lmi:convex_smooth}),
convex closed proper regularizers (Theorem~\ref{thm:contractivity_lmi:ccp}), and
Lipschitz-gradient regularizers (Theorem~\ref{thm:contractivity:lipschitz}), together with certificates for the feasibility of the proposed matrix inequality and the well-posedness of the optimal control problem (Theorem~\ref{thm:lipschitz:feasibility_wellposedness}); see Table~\ref{table:summary} for a summary.
These three classes encompass the following practical regularizers:
\begin{enumerate}
    \item soft penalties on state constraints,
    \item soft penalties on input constraints,
    \item hard constraints on inputs, and
    \item sparsity-promoting regularizers.
\end{enumerate}
Finally, for each of these regularizers we present a numerical case study demonstrating the performance improvements induced by regularization while preserving formal contraction-based stability guarantees.

\subsection{Organization}

The remainder of this paper is organized as follows.
Section~\ref{sec:preliminaries} recaps contraction theory and Lur'e analysis with incremental multipliers.
Section~\ref{sec:problem_setup} presents the regularized MPC problem setting and the three classes of regularizers considered in this paper.
Section~\ref{sec:implicit_lure} develops the methods used to analyze the contractivity of regularized MPC.
Section~\ref{sec:main_results} presents our main results on the contractivity of regularized MPC for the three classes of regularizers.
Section~\ref{sec:numerical_examples} provides numerical examples illustrating the effectiveness of the proposed framework.
Section~\ref{sec:conclusion} concludes the paper.

\subsection{Notation}

Let $\mathbb{R}$ denote the set of real numbers, with closed and open
intervals denoted by $[a,b]$ and $(a,b)$, respectively. The power set of a
set $A$ is $\mathcal{P}(A) \coloneqq \{B \mid B \subseteq A\}$, and the
direct sum of sets $A$ and $B$ is $A \oplus B$.  For vectors, $v_i$ denotes
the $i$-th element of $v$, and we define the weighted Euclidean norm as
$\|v\|_P \coloneqq \sqrt{v^\top P v}$, where $P$ is positive definite. We
concisely denote a column vector of scalars as $(a_1, \dots, a_n) \in
\mathbb{R}^n$, and the vertical concatenation of vectors $v^1, \dots, v^k$
as $[v^1; \dots; v^k] \coloneqq [(v^1)^\top \cdots\ (v^k)^\top]^\top$.  For
matrices, $I_n$ is the $n \times n$ identity matrix, $A \otimes B$ is the
Kronecker product, and $\mathrm{diag}(A_1, \dots, A_n)$ represents a
block-diagonal matrix with diagonal blocks $A_i$. The spectral radius of
$A$ is $\rho(A)$; $A$ is termed \emph{Schur} if $\rho(A) < 1$.  For a
scalar function $f \colon \mathbb{R}^n \to \mathbb{R}$, its gradient is
$\nabla f(x) \in \mathbb{R}^n$. The Jacobian of a vector-valued function $f
\colon \mathbb{R}^n \to \mathbb{R}^m$ is defined as $\frac{\partial
  f}{\partial x} \coloneqq \left[ \frac{\partial f}{\partial x_1} \ \cdots
  \ \frac{\partial f}{\partial x_n} \right] \in \mathbb{R}^{m \times n}$.

\section{Preliminaries}
\label{sec:preliminaries}

Throughout this paper, we consider the discrete-time linear time-invariant system
\begin{align}
    x(t+1) = Ax(t) + Bu(t),
    \label{plant_lti}
\end{align}
where $t \in \timeset$ denotes the time step, $x(t) \in \R^{\nstate}$ the state, $u(t) \in \R^{\ninput}$ the input, and $A \in \R^{\nstate \times \nstate}$, $B \in \R^{\nstate \times \ninput}$.

\subsection{Contraction Theory \cite{FB:26-CTDS}}

\begin{definition}[Contractivity]
    The discrete-time system $x(t+1) = f_t(x(t))$ is strongly contracting with respect to the norm $\|\cdot\|_P$ with factor $\eta \in (0,1)$ if, for any trajectories $\{x^{(1)}(t)\}$ and $\{x^{(2)}(t)\}$ of the closed-loop system and any time $t \in \timeset$,
    \begin{align}
        \label{eq:contraction_definition}
        \| x^{(1)}(t+1) - x^{(2)}(t+1)\|_{P}
        \le \eta \|x^{(1)}(t) - x^{(2)}(t)\|_{P}.
    \end{align}
\end{definition}

\smallskip

Let the (time-varying) feedback law for the system \eqref{plant_lti} be
\begin{align}
    \label{feedback_law}
    u(t) = \feedback_t(x(t)),
\end{align}
where $\feedback_t : \R^{\nstate} \to \R^{\ninput}$ for each $t \in \timeset$.

\begin{lemma}
\label{lemma:contraction_benefit}
Suppose that the system \eqref{plant_lti} with feedback law \eqref{feedback_law} is strongly contracting with factor $\eta$.
\begin{enumerate}
    \item \label{item:contraction:stability}
    All trajectories converge exponentially to the origin\footnote{
        A discrete-time system $x(t+1) = f_t(x(t))$ is said to converge to the origin exponentially with factor $\eta$ if there exists $c>0$ such that any trajectory $\{x(t)\}_{t \in \timeset}$ satisfies $\|x(t)\| \le c \eta^t \|x(0)\|$.
    } if $\feedback_t(0) \equiv 0$ for all $t \in \timeset$.
    \item \label{item:contraction:periodic}
    All trajectories converge to a periodic orbit with period $T$ if $\feedback_t(x)$ is periodic with respect to $t$ with period $T$.
\end{enumerate}
\end{lemma}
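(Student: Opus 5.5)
Both parts follow from the one-step contraction inequality \eqref{eq:contraction_definition}, applied to a suitably chosen second trajectory. Throughout, write $f_t(x) \coloneqq Ax + Bk_t(x)$ for the closed-loop map.

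For item~\ref{item:contraction:stability}, the key observation is that $k_t(0)=0$ gives $f_t(0)=0$, so $x^{(2)}(t)\equiv 0$ is a trajectory. Taking an arbitrary trajectory $x^{(1)}$ and this zero trajectory in \eqref{eq:contraction_definition}, and iterating, yields
\begin{align*}
\|x(t)\|_P \le \eta^t \|x(0)\|_P .
\end{align*}
Equivalence of norms, $\lambda_{\min}(P)\|v\|^2 \le \|v\|_P^2 \le \lambda_{\max}(P)\|v\|^2$, then gives the claim with $c=\sqrt{\lambda_{\max}(P)/\lambda_{\min}(P)}$.

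For item~\ref{item:contraction:periodic}, I would reduce to a fixed-point argument on the period map $F \coloneqq f_{T-1}\circ\cdots\circ f_0$. Periodicity gives $f_{t+T}=f_t$, so for any $j$ the map $f_{t_0+T-1}\circ\cdots\circ f_{t_0}$ coincides with $F$ whenever $t_0=jT$. Iterating \eqref{eq:contraction_definition} shows $F$ is a contraction with factor $\eta^T<1$ on the complete space $(\R^n,\|\cdot\|_P)$. Banach's fixed-point theorem then gives a unique $x^*$ with $F(x^*)=x^*$.

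Next I would define $x^\star(t)$ as the trajectory starting at $x^\star(0)=x^*$. By periodicity of the $f_t$ and $F(x^*)=x^*$, induction gives $x^\star(t+T)=x^\star(t)$, so this trajectory is a $T$-periodic orbit. Finally, I would compare any trajectory $x(t)$ with $x^\star(t)$ using \eqref{eq:contraction_definition}, which gives
\begin{align*}
\|x(t)-x^\star(t)\|_P \le \eta^t\|x(0)-x^*\|_P \to 0 ,
\end{align*}
so every trajectory converges, in fact exponentially, to the periodic orbit.

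The only step needing some care is verifying that the time-shifted compositions equal $F$ and that the resulting orbit is genuinely periodic. One also has to read ``converge to a periodic orbit'' as convergence to the specific periodic trajectory $x^\star(t)$ in the time-synchronized sense, which is stronger than convergence to the orbit as a set. Everything else is routine.
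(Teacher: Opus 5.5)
Your proposal is correct and follows the paper's route: item 1 is exactly the paper's argument of comparing an arbitrary trajectory with the zero trajectory, and you additionally make the constant $c=\sqrt{\lambda_{\max}(P)/\lambda_{\min}(P)}$ explicit. For item 2 the paper only points to the continuous-time entrainment theorem in the cited contraction-theory text; the standard proof of that result is the period-map/Banach fixed-point argument you carry out in discrete time, and your argument needs contraction only along trajectories starting at time $0$, which is what the definition provides.
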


\proof
\ref{item:contraction:stability})
$\feedback_t(0) \equiv 0$ for all $t \in \timeset$ implies that the origin is an equilibrium of the closed-loop system.
The proof is completed by taking $\{x^{(1)}(t)\}$ arbitrarily and $x^{(2)}(t) \equiv 0$.

\ref{item:contraction:periodic})
The continuous-time version is presented in \cite[Theorem 3.15]{FB:26-CTDS}, and the proof is parallel to the discrete-time setting.
\endproof

\subsection{Lur'e Analysis for Contractivity}

\begin{definition}[Incremental multipliers for set-valued mappings]
\label{def:incremental_multiplier_set_valued}
Let $M \in \R^{(\nstate+r)\times(\nstate+r)}$ be symmetric.
A set-valued mapping $\Psi: \R^{\nstate} \to \mathcal{P}(\R^{r})$ admits $M$ as an incremental multiplier matrix (IMM) if, for all $z^{(1)}, z^{(2)} \in \R^{\nstate}$ and all $g^{(1)} \in \Psi(z^{(1)}),\, g^{(2)} \in \Psi(z^{(2)})$,
\begin{align}
    \label{incremental_multiplier_set_valued}
    \begin{bmatrix}
    z^{(1)} - z^{(2)} \\
    g^{(1)} - g^{(2)}
    \end{bmatrix}^\top
    M
    \begin{bmatrix}
    z^{(1)} - z^{(2)} \\
    g^{(1)} - g^{(2)}
    \end{bmatrix}
    \ge 0.
\end{align}
In particular, if $\Psi$ is a single-valued mapping, i.e., $\Psi: \R^{\nstate} \to \R^{r}$,\footnote{
    In this paper, a function $\psi: \R^{\nstate} \to \R^{r}$ is identified with the single-valued mapping $\Psi: \R^{\nstate} \to \mathcal{P}(\R^{r})$ defined by $\Psi(z) = \{\psi(z)\}$ for all $z \in \R^{\nstate}$.
} then necessarily $g^{(1)} = \Psi(z^{(1)})$ and $g^{(2)} = \Psi(z^{(2)})$ in \eqref{incremental_multiplier_set_valued}.
We refer to \eqref{incremental_multiplier_set_valued} as the IMM condition for $\Psi$.
\end{definition}

\begin{example}[Multiplier from monotonicity {\cite[\S 6.2]{LDA-MC:13}}]
\label{example:multiplier_monotone}
A set-valued mapping $\Psi: \R^{r} \to \mathcal{P}(\R^{r})$ is monotone if, for any $z^{(1)}, z^{(2)} \in \R^{r}$ and any $g^{(1)} \in \Psi(z^{(1)}), g^{(2)} \in \Psi(z^{(2)})$, we have $(z^{(1)} - z^{(2)})^\top (g^{(1)} - g^{(2)}) \ge 0$.
In this case, $\Psi$ admits
$
    \begin{bmatrix}
    0 & I_r \\
    I_r & 0
    \end{bmatrix}
$
as an IMM.
\end{example}

\begin{example}[Multiplier from Lipschitzness {\cite[\S 6.1]{LDA-MC:13}}]
\label{example:multiplier_lipschitz}
Let $\psi: \R^n \to \R^m$ be Lipschitz with Lipschitz constant $L$, i.e., $\|\psi(u) - \psi(v)\| \le L \|u-v\|$ holds for all $u, v \in \R^n$.
Then $\psi$ admits $M=\diag{I_n, -\frac{1}{L^2} I_m}$ as an IMM.
\end{example}

\begin{example}[Multiplier from convex analysis {\cite[Theorem 18.15]{HHB-PLC:17}, \cite[Theorem 2.1.5]{YN:18}}]
\label{example:multiplier_convex_analysis}
A function $f: \R^n \to \R$ is $L$-smooth if its gradient is Lipschitz with Lipschitz constant $L$.
If $f$ is convex as well, then we have $(\nabla f(u) - \nabla f(v))^\top (u-v) \ge \frac{1}{L} \|\nabla f(u) - \nabla f(v)\|^2$ for all $u, v \in \R^n$, which means that $\nabla f: \R^n \to \R^n$ admits $M=\begin{bmatrix} 0 & I_n \\ I_n & -\frac{2}{L} I_n \end{bmatrix}$ as an IMM.
\end{example}

\begin{lemma}[Contractivity via IMM]
\label{lemma:contraction_multiplier}
Consider the system \eqref{plant_lti} with feedback control \eqref{feedback_law}.
Let $P \in \R^{\nstate \times \nstate}$ be positive definite and $\eta \in (0,1)$.
Suppose that $\feedback_t$ in \eqref{feedback_law} admits $M_1, \ldots, M_p \in \R^{(\nstate+\ninput)\times(\nstate+\ninput)}$ as IMMs.
If there exist nonnegative scalars $\lambda_1, \ldots, \lambda_p$ satisfying
\begin{align}
    \label{contractivity_lmi_discrete}
    \begin{bmatrix}
        A^\top P A - \eta^2 P & A^\top P B \\
        B^\top P A & B^\top P B
    \end{bmatrix}
    +
    \sum_{i=1}^{p} \lambda_i M_i
    \preceq 0
    ,
\end{align}
then the closed-loop system is strongly contracting with respect to the norm $\|\cdot\|_P$ with factor $\eta$.
\end{lemma}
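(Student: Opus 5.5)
Fix any time $t \in \timeset$ and any two trajectories $\{x^{(1)}(t)\}$, $\{x^{(2)}(t)\}$ of the closed loop \eqref{plant_lti}--\eqref{feedback_law}. We write $u^{(i)} = \feedback_t(x^{(i)}(t))$, $\Delta x = x^{(1)}(t) - x^{(2)}(t)$, $\Delta u = u^{(1)} - u^{(2)}$ and $\xi = [\Delta x; \Delta u] \in \R^{\nstate+\ninput}$. The closed-loop dynamics are linear in the increments, so
\begin{align*}
    x^{(1)}(t+1) - x^{(2)}(t+1) = A\Delta x + B\Delta u .
\end{align*}
Hence
\begin{align*}
    \|x^{(1)}(t+1) - x^{(2)}(t+1)\|_P^2 - \eta^2\|\Delta x\|_P^2
    = \xi^\top
    \begin{bmatrix}
        A^\top P A - \eta^2 P & A^\top P B \\
        B^\top P A & B^\top P B
    \end{bmatrix}
    \xi .
\end{align*}
This rewrites the contraction inequality \eqref{eq:contraction_definition}, after squaring both sides, as a quadratic form in $\xi$. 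The goal is then to show that this quadratic form is nonpositive.

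Next I apply the matrix inequality \eqref{contractivity_lmi_discrete}. Multiplying it on the left by $\xi^\top$ and on the right by $\xi$ gives
\begin{align*}
    \xi^\top
    \begin{bmatrix}
        A^\top P A - \eta^2 P & A^\top P B \\
        B^\top P A & B^\top P B
    \end{bmatrix}
    \xi
    \le -\sum_{i=1}^p \lambda_i\, \xi^\top M_i \xi .
\end{align*}
Each $M_i$ is an IMM for $\feedback_t$. Since $\feedback_t$ is single-valued, Definition~\ref{def:incremental_multiplier_set_valued} with $z^{(j)} = x^{(j)}(t)$ and $g^{(j)} = \feedback_t(x^{(j)}(t))$ yields $\xi^\top M_i \xi \ge 0$. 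The weights satisfy $\lambda_i \ge 0$, so the right-hand side is nonpositive. Therefore $\|x^{(1)}(t+1)-x^{(2)}(t+1)\|_P^2 \le \eta^2 \|\Delta x\|_P^2$. Taking square roots, which is valid because both sides are nonnegative and $\eta>0$, gives \eqref{eq:contraction_definition}.

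The argument is an S-procedure, and none of its steps is a genuine obstacle. The one point to handle carefully is that the IMMs must hold for $\feedback_t$ at the same time $t$ at which the contraction step is taken. I read the hypothesis as saying that $\feedback_t$ admits the $M_i$ for every $t$, with the same $P$, $\eta$ and $\lambda_i$ throughout. Under that reading the bound is uniform in $t$, which is what the definition of strong contractivity requires.
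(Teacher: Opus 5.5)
Your proposal is correct and follows the paper's proof: you multiply the LMI on both sides by the increment vector $[\Delta x;\Delta u]$, use $\xi^\top M_i \xi \ge 0$ together with $\lambda_i \ge 0$, and identify the remaining quadratic form with $\|A\Delta x + B\Delta u\|_P^2 - \eta^2\|\Delta x\|_P^2$. Your remark that the IMMs must hold for $k_t$ at every $t$, with fixed $P$, $\eta$ and $\lambda_i$, states explicitly an assumption the paper leaves implicit.
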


\proof
Let $x^{(1)}, x^{(2)} \in \R^{\nstate}$ be  arbitrary states, and let $Y \coloneqq [x^{(1)} - x^{(2)}; \, \feedback_t(x^{(1)}) - \feedback_t(x^{(2)})]$.
Upon pre-multiplying \eqref{contractivity_lmi_discrete} by $Y^\top$, post-multiplying by $Y$, and utilizing the fact that $Y^\top M_i Y \geq 0 \ \forall i = 1, \ldots, p$, we obtain
\begin{align}
     Y^\top
    \begin{bmatrix}
        A^\top P A - \eta^2 P & A^\top P B \\
        B^\top P A & B^\top P B
    \end{bmatrix}
    Y
    \le 0,
\end{align}
which in turn yields the contractivity condition~\eqref{eq:contraction_definition}.
\endproof

\begin{remark}
    A continuous-time counterpart for Lemma~\ref{lemma:contraction_multiplier} with a single matrix multiplier is presented in~\cite[Theorem 4.2]{LDA-MC:13}.
\end{remark}

\section{Problem Setup: Regularized MPC}

\label{sec:problem_setup}

Given the system \eqref{plant_lti} defined by $A \in \R^{\nstate \times \nstate}$ and $B \in \R^{\nstate \times \ninput}$, the \emph{regularized MPC} solves the following optimal control problem (OCP) at each time step:
\begin{subequations}
\label{ocp}
\begin{align}
    \label{argmin}
    & \hspace{-60pt}
    \begin{aligned}
        \min_{U \in \R^{H\ninput}}
        & \frac12 \sum_{h=1}^{H} (\| x_{h} \|_{Q}^2 + \|u_{h}\|_{R}^2)
        + \frac12 \|x_{H+1}\|_{Q_\mathrm{f}}^2
        \\
        & \hspace{100pt}
        + \Vreg(X,U,t)
    \end{aligned}
    \\
    \label{ocp_plant_lti}
    \mathrm{where} \quad
    x_{h+1} &= A x_h + B u_h, \quad h = 1, \ldots, H
    ,
    \\
    \label{ocp_initial_state}
    x_1 &= x
    ,
    \\
    U &\coloneqq [u_1; \ldots ; u_{H}] \in \R^{H\ninput}
    ,
    \\
    X &\coloneqq [x_2; \ldots ; x_{H+1}] \in \R^{H\nstate}
    .
\end{align}
\end{subequations}
Here, $H \in \N$ is the prediction horizon,
$u_1, \ldots, u_H$ form the input sequence over the horizon, and
$x_1, \ldots, x_{H+1}$ form the state sequence over the horizon with $x_1=x$.
Moreover, $Q \in \R^{n \times n}$, $R \in \R^{m \times m}$, and $Q_{\mathrm{f}} \in \R^{n \times n}$ are the state, input, and terminal cost matrices, respectively.
We refer to the cost term $\Vreg(X,U,t) \in \R \cup \{+\infty\}$ as the \emph{regularizer} (or \emph{regularizing cost}).
\footnote{
    The regularizer may be time-varying when, for instance, the cost weights vary over time or set points are specified along the prediction horizon.
    See also \S \ref{sec:numerical_example_hard_input_penalty} for an example of tracking MPC.
}
For brevity, when the regularizer does not depend on some of its arguments, we omit them; for instance, we write $\Vreg(U,t)$, $\Vreg(X,U)$, or $\Vreg(U)$ when $\Vreg$ is independent of $X$ and/or $t$.
The control law of the regularized MPC is
\begin{align}
    \label{mpc_control_law}
    u(t) = \Pi_1 U^\ast_t(x(t))
    ,
\end{align}
where $\Pi_1 = [I_{\ninput} \ 0] \in \R^{\ninput \times H\ninput}$ denotes the selection matrix that extracts $u_1$ from $U$ and $U^\ast_t(x)$ denotes the optimal solution to the OCP \eqref{ocp}.

By substituting \eqref{ocp_plant_lti} and \eqref{ocp_initial_state} into \eqref{argmin} and omitting the terms independent of $U$, we can rewrite the OCP \eqref{ocp} as
\begin{align}
    \label{ocp_expanded}
    \min_{U \in \R^{H\ninput}}
    U^\top C x + \frac12 U^\top D U
    + \Vreg(\bar{A}x + \bar{B}U, U, t)
    ,
\end{align}
where $\bar{A}, \bar{B}, C, D$ are defined as
\begin{gather*}
    \bar{A}
    \coloneqq
    \begin{bmatrix}
        A
        \\
        \vdots
        \\
        A^H
    \end{bmatrix}
    ,
    \quad
    \bar{B}
    \coloneqq
    \begin{bmatrix}
        B & 0 & \cdots & 0
        \\
        AB & B & \ddots & \vdots
        \\
        \vdots & \ddots & \ddots & 0
        \\
        A^{H-1}B & \cdots & AB & B
    \end{bmatrix}
    ,
    \\
    C \coloneqq \bar{B}^\top \bar{Q} \bar{A},
    \quad
    D \coloneqq \bar{R} + \bar{B}^\top \bar{Q} \bar{B}
    ,
    \\
    \begin{aligned}
    \bar{Q} &\coloneqq \diag{Q, \ldots, Q, Q_\mathrm{f}} \in \R^{H\nstate \times H\nstate}
    ,
    \\
    \bar{R} &\coloneqq \diag{R, \ldots, R} \in \R^{H\ninput \times H\ninput}
    .
    \end{aligned}
\end{gather*}
Here, $\bar{Q}$ has $H-1$ copies of $Q$ followed by $Q_\mathrm{f}$.

\begin{assumption}
\label{assumption:positive_definite_cost}
The matrix $D$ is positive definite, i.e., $D\succ 0$.
\end{assumption}

Assumption~\ref{assumption:positive_definite_cost} holds if $Q_\mathrm{f} \succeq 0$, $Q \succeq 0$, and $R \succ 0$.

We refer to the MPC with $\Vreg \equiv 0$ as the \emph{nominal MPC}.
The control law of the nominal MPC is given by $u(t) = - \Pi_1 D^{-1} C x(t)$, and the closed-loop system is given by
\begin{gather}
    \notag
    x(t+1) = A_\mathrm{nom} x(t),
    \\
    \label{A_nom}
    A_\mathrm{nom} \coloneqq A - B \Pi_1 D^{-1} C.
\end{gather}
The matrix $A_\mathrm{nom} \in \R^{\nstate \times \nstate}$ is Schur if and only if the closed-loop system of the nominal MPC is stable, which is the case when, for example, the terminal cost matrix $Q_\mathrm{f}$ is chosen as the solution to the discrete algebraic Riccati equation (DARE):
\begin{align}
    \label{dare}
    Q_\mathrm{f} = A^\top Q_\mathrm{f} A - A^\top Q_\mathrm{f} B (R + B^\top Q_\mathrm{f} B)^{-1} B^\top Q_\mathrm{f} A + Q
    .
\end{align}
See also \cite[\S 2.5.1]{JBR-DQM-MMD:19} for the stability of the nominal MPC.

The following sections introduce the three classes of regularizers considered in this paper, together with practical examples.

\subsection{Case 1: Convex smooth regularizers}

\begin{assumption}[Convex smooth regularizer]
\label{assumption:convex_smooth_regularizer}
The regularizer $\Vreg$ is of the form
\begin{align}
    \label{Vreg:input_stage_costs}
    \Vreg(U, t) = \sum_{h=1}^H \ellreg(u_h, t+h)
    ,
\end{align}
where $\ellreg(\cdot,t)$ is convex and $L$-smooth for each $t \in \timeset$.
\end{assumption}

\begin{example}[Soft penalty for input constraints]
\label{example:input_soft_penalty}
To encourage the inputs to satisfy $\underline{u} \leq u_i \leq \bar{u}$ $(i=1,\dots,\ninput)$, we introduce a soft penalty on constraint violations.
First, for a box constraint $ - a_\mathrm{th} \leq z \leq a_\mathrm{th}$ for scalar variable $z$ with threshold $a_\mathrm{th}>0$, we define the violation function
\begin{align}
    \label{penalty_function}
    f_\mathrm{p} (z, a_\mathrm{th})
    &=
    \begin{cases}
    \frac12 (z - a_\mathrm{th})^2
    & \mathrm{if}\ z > a_\mathrm{th}
    ,
    \\
    \frac12 (z + a_\mathrm{th})^2
    & \mathrm{if}\ z < - a_\mathrm{th}
    ,
    \\
    0
    & \mathrm{if}\ |z| \le a_\mathrm{th}
    .
    \end{cases}
\end{align}
With respect to $z$, the function $f_\mathrm{p}$ is convex and its gradient has Lipschitz constant at most $1$.
Using this function, we introduce the following quadratic input penalty as a regularizer:
\begin{align}
\label{input_soft_penalty}
\Vreg(U)
&= \sum_{h=1}^H \ellreg(u_h) ,
\\
\label{ellreg:input_soft_penalty}
\ellreg(u)
&= L \sum_{i=1}^\ninput f_\mathrm{p}(u_i - u_\mathrm{c}, a_\mathrm{th}) ,
\end{align}
where $a_\mathrm{th} \coloneqq \frac12(\bar{u} - \underline{u})$ and $u_\mathrm{c} \coloneqq \frac12(\bar{u} + \underline{u})$.
Since $f_\mathrm{p}$ is convex and $1$-smooth, $\ellreg$ is convex and $L$-smooth, and Assumption~\ref{assumption:convex_smooth_regularizer} holds for $\Vreg$ in \eqref{input_soft_penalty}.
\end{example}

\subsection{Case 2: Convex closed proper regularizers}

\begin{assumption}[Convex closed proper regularizer]
\label{assumption:convex_closed_proper_regularizer}
The regularizer $\Vreg$ is of the form \eqref{Vreg:input_stage_costs}, where $\ellreg(\cdot, t)$ is convex closed and proper (CCP) for each $t \in \timeset$.
\end{assumption}

\begin{example}[Hard constraints on inputs]
\label{example:input_hard_constraint}
The regularized MPC with hard input constraints satisfies Assumption~\ref{assumption:convex_closed_proper_regularizer} if the feasible set $\mathcal{U}_t \subseteq \R^{\ninput}$ is convex, closed, and nonempty for each $t \in \timeset$.
Indeed, the restriction of $u$ to $\mathcal{U}_t$ at each time $t$ can be achieved by choosing the regularizer as the indicator function of $\mathcal{U}_t$, i.e.,
\begin{align}
    \label{indicator_regularization}
    \ellreg(u, t) =
    \begin{cases}
        0, & u \in \mathcal{U}_t
        ,
        \\
        +\infty, & u \notin \mathcal{U}_t
        ,
    \end{cases}
\end{align}
which is CCP for each $t \in \timeset$ if $\mathcal{U}_t$ is convex, closed, and nonempty.
\end{example}

\begin{example}[Sparse control]
\label{example:sparse}
A common regularizer to promote sparsity of the input $u$ is the $\ell_1$ norm \cite{TA-HK-DA-GO-SL:16,TO-CRR:17}:
\begin{align}
\label{l1_regularization}
\ellreg(u) = \lambda \|u\|_1 ,
\end{align}
where $\lambda > 0$ is a tuning parameter that controls the sparsity.
Since $\ellreg$ in \eqref{l1_regularization} is CCP, Assumption~\ref{assumption:convex_closed_proper_regularizer} holds for $\Vreg$ in \eqref{l1_regularization}.
\end{example}

\subsection{Case 3: Lipschitz-gradient regularizers}

\begin{assumption}[Well-posedness of OCP]
    \label{assumption:lipschitz:well-posedness}
    For any $x$ and $t$, the OCP \eqref{ocp} (or \eqref{ocp_expanded}) admits a unique optimal solution and no other stationary points.\footnote{
        Cases~1 and 2 do not require this well-posedness assumption, since the objective functions are strongly convex under Assumption~\ref{assumption:positive_definite_cost}.
    }
\end{assumption}

We will provide a sufficient condition for Assumption~\ref{assumption:lipschitz:well-posedness} in Theorem~\ref{thm:lipschitz:feasibility_wellposedness} below.

Suppose $\Vreg$ is differentiable with respect to $(X,U)$.
Under Assumption~\ref{assumption:lipschitz:well-posedness}, the optimal input of the OCP \eqref{ocp} (or \eqref{ocp_expanded}) is given by the unique solution to the following first-order optimality equation:
\begin{align}
    \label{equation:implicit}
    Cx + DU + \psi_t(\bar{A}x + \bar{B}U, U) = 0
    ,
    \\
    \label{psi_gradient_lipschitz}
    \psi_t(X, U)
    \coloneqq
    \nabla_U \Vreg(X, U, t)
    +
    \bar{B}^\top \nabla_X \Vreg(X, U, t)
    .
\end{align}
Note that the function $\psi_t(\bar{A}x + \bar{B}U, U)$ plays the role of the gradient of the regularizer with respect to $U$.
Throughout this paper, we refer to $\psi_t$ as the \emph{gradient} of the regularizer.

\begin{assumption}[Lipschitz-gradient regularizer]
\label{assumption:gradient_lipschitz_regularizer}
The regularizer $\Vreg$ is differentiable with respect to $(X,U)$, and its gradient $\psi_t(X, U)$ defined in \eqref{psi_gradient_lipschitz} is Lipschitz with respect to $(X,U)$ uniformly in $t$, with Lipschitz constant $L$.
\end{assumption}

Assumption~\ref{assumption:gradient_lipschitz_regularizer} allows nonconvex regularizers, as illustrated by the following example.

\begin{example}[Soft penalty for state constraints]
\label{example:state_soft_penalty}
To encourage states to satisfy box constraints of the form $\underline{x} \leq x_i \leq \bar{x}$ $(i=1,\dots,\nstate)$, consider the following regularizer:
\begin{align}
    \label{Vreg:state_soft_penalty}
    \Vreg(X, U)
    &= \Freg(x_{H+1}) + \sum_{h=1}^{H} \ellreg(x_h, u_h)
    ,
    \\
    \label{ellreg_state_soft_penalty}
    \ellreg (x, u)
    &= \frac{L}{\sqrt{2}}
    \Big(
        \sum_{i=1}^n \frac{f_\mathrm{p}(x_i - x_\mathrm{c}, a_\mathrm{th})}{\|\bar{B}^\top\|} - \frac{\|u\|^2}{2}
    \Big)
    ,
    \\
    \label{Freg_state_soft_penalty}
    \Freg(x)
    &= \frac{L}{\sqrt{2}} \sum_{i=1}^n \frac{f_\mathrm{p}(x_i - x_\mathrm{c}, a_\mathrm{th})}{\|\bar{B}^\top\|}
    ,
\end{align}
where $a_\mathrm{th} = \frac12(\bar{x} - \underline{x})$, $x_\mathrm{c} = \frac12(\bar{x} + \underline{x})$, and $f_\mathrm{p}$ is defined in \eqref{penalty_function}.
In this setting, the gradient $\psi$ is given by
\begin{align}
    \notag
    \psi(X, U) &= - \frac{L}{\sqrt{2}} U + \psi_\mathrm{p}(X)
    ,
    \\
    \notag
    \psi_\mathrm{p}(X)
    &\coloneqq
    \frac{L}{\sqrt{2}\|\bar{B}^\top\|} \bar{B}^\top
    \begin{bmatrix}
    \frac{\partial f_\mathrm{p}}{\partial a}(X_1 - x_\mathrm{c}, a_\mathrm{th}) \\
    \vdots \\
    \frac{\partial f_\mathrm{p}}{\partial a}(X_{H\nstate} - x_\mathrm{c}, a_\mathrm{th})
    \end{bmatrix}
    .
\end{align}
As mentioned in Example~\ref{example:input_soft_penalty}, $\frac{\partial f_\mathrm{p}}{\partial a}$ is Lipschitz with respect to $a$ with Lipschitz constant at most $1$.
Thus, $\psi_\mathrm{p}$ has a Lipschitz constant at most $L/\sqrt{2}$.
For all $X, X^\prime \in \R^{H\nstate}$ and $U, U^\prime \in \R^{Hm}$, we have
\begin{align*}
    &
    \|\psi(X, U) - \psi(X^\prime, U^\prime)\|
    \\
    &\leq
    \|\psi_\mathrm{p}(X) - \psi_\mathrm{p}(X^\prime)\| + \frac{L}{\sqrt{2}} \|U - U^\prime\|
    \\
    &\leq \frac{L}{\sqrt{2}} (\|X - X^\prime\| + \|U - U^\prime\|)
    \\
    &\leq L \|[X; U] - [X^\prime; U^\prime]\|
    ,
\end{align*}
where the first inequality is from the triangle inequality, the second inequality is from the Lipschitzness of $\psi_\mathrm{p}$, and the last inequality is from Cauchy--Schwarz inequality.
Therefore, the overall mapping $\psi$ is Lipschitz with Lipschitz constant at most $L$, and the regularizer in \eqref{Vreg:state_soft_penalty}--\eqref{Freg_state_soft_penalty} satisfies Assumption~\ref{assumption:gradient_lipschitz_regularizer}.
\end{example}

\section{Methods: Implicit Lur'e Analysis}
\label{sec:implicit_lure}

Our key observation in the contractivity analysis of regularized MPC is that the gradient of the regularizer acts as a perturbation of the optimality conditions of the OCP \eqref{ocp}.
This viewpoint naturally motivates us to consider the \emph{implicit Lur'e system}:
\begin{subequations}
\label{implicit_lure_system}
\begin{gather}
    \label{implicit_lure_plant}
    x(t+1) = A x(t) + B u(t)
    ,
    \\
    \label{generalized_equation}
    0 \in Cx(t) + Du(t) + \Psi_t(Fx(t) + Gu(t))
    ,
\end{gather}
\end{subequations}
where $x \in \R^{\nstate}$ is the state and $u \in \R^{\ninput}$ is the input.
The input $u(t)$ at time $t$ is implicitly determined by the inclusion relation \eqref{generalized_equation} (which is referred to as the \emph{generalized equation} \cite[Chapter 2]{ALD-RTR:09}) with $C \in \R^{\ninput \times \nstate}$, $D \in \R^{\ninput \times \ninput}$, $F \in \R^{r \times \nstate}$, $G \in \R^{r \times \ninput}$ and $\Psi_t: \R^{r} \to \mathcal{P}(\R^{\ninput})$.
In regularized MPC, the generalized equation \eqref{generalized_equation} plays the role of the first-order optimality conditions of the OCP \eqref{ocp}, and $\Psi_t: \R^{r} \to \mathcal{P}(\R^{\ninput})$ plays the role of the gradient of the regularizer $\Vreg(X,U,t)$.

\subsection{Contraction Theory via Implicit Lur'e Analysis}

To facilitate theoretical analysis, we impose the following assumption on well-posedness of \eqref{generalized_equation}.

\begin{assumption}[Well-posedness of generalized equation]
\label{assumption:generalized_equation}
The generalized equation \eqref{generalized_equation} admits a unique solution for each $x$ and $t$.
\end{assumption}

We will provide sufficient conditions for Assumption~\ref{assumption:generalized_equation} in \S\ref{sec:well_posedness}.

\begin{proposition}[IMM for solutions to generalized equations]
\label{prop:implicit_lure}
Under Assumption~\ref{assumption:generalized_equation}, consider the generalized equation \eqref{generalized_equation} and denote its solution mapping by $u^\ast_t : \R^{\nstate} \to \R^{\ninput}$.
If $\Psi_t$ in \eqref{generalized_equation} admits the IMM $M \in \R^{(r+\ninput)\times(r+\ninput)}$, then the solution mapping $u^\ast_t$ admits the IMM
\begin{align}
    \label{multiplier_imp}
    \mathtt{sol}(M)
    \coloneqq
    \begin{bmatrix}
    F & G
    \\
    -C & -D
    \end{bmatrix}^\top
    M
    \begin{bmatrix}
    F & G
    \\
    -C & -D
    \end{bmatrix}
    .
\end{align}
\end{proposition}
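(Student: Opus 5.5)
The argument is a direct unwinding of the definition of an IMM. The idea is to read the generalized equation as a relation between an input of $\Psi_t$ and one of its outputs, and then to observe that these are affine images of the pair $(x, u^\ast_t(x))$.

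Fix $t$ and take arbitrary $x^{(1)}, x^{(2)} \in \R^{\nstate}$. By Assumption~\ref{assumption:generalized_equation} the values $u^{(i)} \coloneqq u^\ast_t(x^{(i)})$ are well defined. Since $u^{(i)}$ solves \eqref{generalized_equation}, there exist
\begin{align*}
    z^{(i)} \coloneqq F x^{(i)} + G u^{(i)}, \qquad g^{(i)} \coloneqq -C x^{(i)} - D u^{(i)},
\end{align*}
with $g^{(i)} \in \Psi_t(z^{(i)})$ for $i = 1,2$. Indeed, $0 \in Cx^{(i)} + Du^{(i)} + \Psi_t(z^{(i)})$ says exactly that some element of $\Psi_t(z^{(i)})$ equals $-Cx^{(i)} - Du^{(i)}$.

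Next I apply the IMM condition \eqref{incremental_multiplier_set_valued} for $\Psi_t$ to these two pairs, which is legitimate because $g^{(i)} \in \Psi_t(z^{(i)})$. This gives $[\Delta z; \Delta g]^\top M [\Delta z; \Delta g] \ge 0$, where
\begin{align*}
    \begin{bmatrix} \Delta z \\ \Delta g \end{bmatrix}
    =
    \begin{bmatrix} F & G \\ -C & -D \end{bmatrix}
    \begin{bmatrix} x^{(1)} - x^{(2)} \\ u^{(1)} - u^{(2)} \end{bmatrix}
\end{align*}
by linearity. Substituting this expression shows that $[x^{(1)}-x^{(2)}; u^{(1)}-u^{(2)}]^\top \mathtt{sol}(M) [x^{(1)}-x^{(2)}; u^{(1)}-u^{(2)}] \ge 0$. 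This is precisely the IMM condition for the single-valued map $u^\ast_t$ with matrix $\mathtt{sol}(M)$.

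There is no real obstacle here. The only care needed is in the set-valued step: the element $g^{(i)}$ must actually be chosen inside $\Psi_t(z^{(i)})$, and this is guaranteed by the inclusion itself. Uniqueness is used only so that $u^\ast_t$ is a function. The block dimensions also need checking: $M$ is $(r+\ninput)\times(r+\ninput)$ and the outer factor is $(r+\ninput)\times(\nstate+\ninput)$, so $\mathtt{sol}(M)$ has the size required of an IMM for a map $\R^{\nstate}\to\R^{\ninput}$.
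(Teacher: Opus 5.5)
Your proof is correct and follows the paper's argument essentially line for line. Both read off $-Cx^{(i)} - Du^{(i)} \in \Psi_t(Fx^{(i)} + Gu^{(i)})$ from the inclusion, express the increment $[\Delta z; \Delta g]$ as $\begin{bmatrix} F & G \\ -C & -D \end{bmatrix}$ applied to $[x^{(1)}-x^{(2)}; u^{(1)}-u^{(2)}]$, and substitute this into the IMM condition for $\Psi_t$; your remarks on where uniqueness enters and on the block dimensions are accurate.
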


\smallskip

\proof
Let $x^{(1)}, x^{(2)} \in \R^{\nstate}$ be arbitrary and let $Y \coloneqq [x^{(1)} - x^{(2)}; u^\ast_t(x^{(1)}) - u^\ast_t(x^{(2)})]$ and $z_\ast^{(i)} \coloneqq Fx^{(i)} + Gu^\ast_t(x^{(i)})$ for $i=1,2$.
Then, by \eqref{generalized_equation}, there exist $g_\ast^{(i)} \in \Psi_t(z_\ast^{(i)})$ for each $i=1,2$ such that $Cx^{(i)} + Du^\ast_t(x^{(i)}) + g_\ast^{(i)} = 0$, which yields
\begin{align*}
    \begin{bmatrix}
        z_\ast^{(1)} - z_\ast^{(2)} \\ g_\ast^{(1)} - g_\ast^{(2)}
    \end{bmatrix}
    =
    \begin{bmatrix}
        F & G
        \\
        -C & -D
    \end{bmatrix}
    Y
    .
\end{align*}
Upon taking $z^{(i)} = z_\ast^{(i)}$ and $g^{(i)} = g_\ast^{(i)}$ in \eqref{incremental_multiplier_set_valued}, we obtain
\begin{align*}
    Y^\top
    \begin{bmatrix}
        F & G
        \\
        -C & -D
    \end{bmatrix}^\top
    M
    \begin{bmatrix}
        F & G
        \\
        -C & -D
    \end{bmatrix}
    Y
    \ge 0
    ,
\end{align*}
which is exactly the IMM condition for $u^\ast_t$ with $\mathtt{sol}(M)$ defined in \eqref{multiplier_imp}.
\endproof

\begin{corollary}[Contractivity via implicit Lur'e analysis]
\label{cor:implicit_lure_contraction}
Consider the implicit Lur'e system \eqref{implicit_lure_system} with Assumption~\ref{assumption:generalized_equation}.
Suppose that $\Psi_t$ in \eqref{generalized_equation} admits $M_1, \dots, M_p \in \R^{(r+\ninput)\times(r+\ninput)}$ as IMMs.
Let $P \in \R^{\nstate \times \nstate}$ be positive definite and $\eta \in (0,1)$.
If there exist nonnegative scalars $\lambda_1, \dots, \lambda_p$ satisfying
\begin{align}
    \label{contractivity_lmi_implicit}
    \begin{aligned}
    &
    \begin{bmatrix}
        A^\top P A - \eta^2 P & A^\top P B \\
        B^\top P A & B^\top P B
    \end{bmatrix}
    +
    \mathtt{sol} \Big( \sum_{i=1}^p \lambda_i M_i \Big)
    \preceq 0
    ,
    \end{aligned}
\end{align}
where $\mathtt{sol}(\cdot)$ is defined as in \eqref{multiplier_imp},
then the closed-loop system is strongly contracting with respect to the norm $\|\cdot\|_P$ with factor $\eta$.
\end{corollary}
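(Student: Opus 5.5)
The plan is to reduce the statement to Lemma~\ref{lemma:contraction_multiplier} by combining Proposition~\ref{prop:implicit_lure} with the linearity of the map $\mathtt{sol}(\cdot)$.

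First, under Assumption~\ref{assumption:generalized_equation}, the generalized equation \eqref{generalized_equation} defines a single-valued solution mapping $u^\ast_t$. The closed loop of \eqref{implicit_lure_system} is therefore exactly the system \eqref{plant_lti} with the feedback law $\feedback_t \coloneqq u^\ast_t$ of the form \eqref{feedback_law}.

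Next, I will note that IMMs are closed under nonnegative combinations. If $\Psi_t$ admits $M_1,\dots,M_p$, then for any $z^{(1)},z^{(2)}$ and any selections $g^{(i)}\in\Psi_t(z^{(i)})$, summing the inequalities \eqref{incremental_multiplier_set_valued} weighted by $\lambda_i\ge 0$ shows that $\Psi_t$ also admits $M\coloneqq\sum_i\lambda_i M_i$. Applying Proposition~\ref{prop:implicit_lure} to this single $M$ then shows that $u^\ast_t$ admits $\mathtt{sol}(M)$ as an IMM.

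Finally, \eqref{contractivity_lmi_implicit} is precisely \eqref{contractivity_lmi_discrete} with $p=1$, multiplier $\mathtt{sol}(M)$, and weight $1$. Lemma~\ref{lemma:contraction_multiplier} therefore yields strong contractivity with respect to $\|\cdot\|_P$ with factor $\eta$.

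An equivalent route avoids the summation step. Since $\mathtt{sol}(\sum_i\lambda_iM_i)=\sum_i\lambda_i\,\mathtt{sol}(M_i)$, because $\mathtt{sol}$ is a congruence, Proposition~\ref{prop:implicit_lure} can be applied to each $M_i$ separately. Lemma~\ref{lemma:contraction_multiplier} is then invoked with the $p$ multipliers $\mathtt{sol}(M_i)$.

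I do not expect a genuine obstacle. The only point needing care is that Lemma~\ref{lemma:contraction_multiplier} requires a well-defined single-valued feedback, and Assumption~\ref{assumption:generalized_equation} supplies this. The possible time variation of $\Psi_t$ is harmless, because the contraction inequality is checked pointwise at each $t$ with the same $P$ and $\eta$.
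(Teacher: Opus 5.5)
Your proposal is correct and matches the paper's proof, which simply states that the corollary follows from Proposition~\ref{prop:implicit_lure} and Lemma~\ref{lemma:contraction_multiplier}. You also fill in the one step the paper leaves implicit: either nonnegative combinations of IMMs are again IMMs, or, equivalently, $\mathtt{sol}(\sum_i \lambda_i M_i) = \sum_i \lambda_i\, \mathtt{sol}(M_i)$ by linearity of the congruence.
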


\proof
The statement follows directly from Proposition~\ref{prop:implicit_lure} and Lemma~\ref{lemma:contraction_multiplier}.
\endproof

\begin{remark}
    \label{remark:novelty:implicit_lure}
    Our contractivity analysis includes matrices $C$ and $D$, whereas \cite{LDA-MC:13} considers $C=0$ and $D=I_{\ninput}$.
\end{remark}

\subsection{Feasibility of Sufficient Conditions for Contractivity}

\begin{proposition}[Feasibility of contractivity LMI]
\label{prop:contractivity_feasibility}
Consider the implicit Lur'e system \eqref{implicit_lure_system}.
Suppose $\Psi_t(z) = \alpha \Psi^\mathrm{base}_t(z)$ with $\alpha > 0$, and let $\Psi^\mathrm{base}_t: \R^{r} \to \mathcal{P}(\R^{\ninput})$ admit the IMMs
$
\begin{bmatrix}
    M_{11}^i & M_{12}^i
    \\
    (M_{12}^i)^\top & M_{22}^i
\end{bmatrix}
, i=1,\ldots,p
$
for each $t \in \timeset$, where $M_{11}^i \in \R^{r \times r}$, $M_{12}^i \in \R^{r \times \ninput}$, and $M_{22}^i \in \R^{\ninput \times \ninput}$.
Then $\Psi_t$ admits the IMMs
$
\begin{bmatrix}
    M_{11}^i & \frac{1}{\alpha} M_{12}^i \\
    \frac{1}{\alpha} (M_{12}^i)^\top & \frac{1}{\alpha^2} M_{22}^i
\end{bmatrix}
, i=1,\ldots,p
.
$
Furthermore, suppose that $D$ is invertible, $A-BD^{-1}C$ is Schur, and there exist nonnegative scalars $\bar{\lambda}_1,\dots,\bar{\lambda}_p$ such that
$
    \bar{M}_{22}
    \coloneqq
    \sum_{i=1}^p \bar{\lambda}_i M_{22}^i
    \prec 0.
$
Then, for any $\eta \in (\rho(A-BD^{-1}C), 1)$, there exist a positive definite matrix $P$, nonnegative scalars $\lambda_1,\dots,\lambda_p$, and a positive scalar $\alpha$ satisfying the contractivity inequality strictly, i.e.,
\begin{align}
    \label{contractivity_lmi:feasibility}
    \begin{multlined}
    \begin{bmatrix}
        A^\top P A - \eta^2 P & A^\top P B \\
        B^\top P A & B^\top P B
    \end{bmatrix}
    \\
    +
    \mathtt{sol}
    \Big(
    \sum_{i=1}^p \lambda_i
    \begin{bmatrix}
        M^i_{11} & \frac{1}{\alpha} M^i_{12} \\
        \frac{1}{\alpha} (M^i_{12})^\top & \frac{1}{\alpha^2} M^i_{22}
    \end{bmatrix}
    \Big)
    \prec 0
    .
    \end{multlined}
\end{align}
\end{proposition}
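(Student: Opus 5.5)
The plan has two parts. The scaling claim is a direct substitution, and the feasibility claim follows from a limiting argument in $\alpha \to 0$ combined with Finsler's lemma.

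\emph{Scaling of the IMMs.} Take $z^{(1)},z^{(2)}\in\R^r$ and $g^{(j)}\in\Psi_t(z^{(j)})=\alpha\Psi^\mathrm{base}_t(z^{(j)})$, so that $g^{(j)}=\alpha g_\mathrm{b}^{(j)}$ with $g_\mathrm{b}^{(j)}\in\Psi^\mathrm{base}_t(z^{(j)})$. Writing $\Delta z = z^{(1)}-z^{(2)}$ and $\Delta g=\alpha\Delta g_\mathrm{b}$, we expand the quadratic form of the scaled matrix. It gives $\Delta z^\top M_{11}^i\Delta z+2\Delta z^\top M_{12}^i\Delta g_\mathrm{b}+\Delta g_\mathrm{b}^\top M_{22}^i\Delta g_\mathrm{b}$, which is nonnegative by the IMM condition for $\Psi^\mathrm{base}_t$. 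This proves the first claim.

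\emph{Choice of multipliers.} For feasibility, I set $\lambda_i = c\,\alpha^2\bar\lambda_i$ with a constant $c>0$ to be fixed later. The aggregated multiplier inside $\mathtt{sol}$ then becomes
$N(\alpha)=c\begin{bmatrix}\alpha^2\bar M_{11} & \alpha \bar M_{12}\\ \alpha\bar M_{12}^\top & \bar M_{22}\end{bmatrix}$, where $\bar M_{11},\bar M_{12}$ are the $\bar\lambda$-weighted sums. As $\alpha\to0$, $N(\alpha)\to c\,\mathrm{diag}(0,\bar M_{22})$. Because $\mathtt{sol}$ is linear, $\mathtt{sol}(N(\alpha))\to c\,[C\ D]^\top\bar M_{22}[C\ D]$. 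This limit is negative semidefinite, and since $\bar M_{22}\prec0$ its kernel is exactly $\{(x,u): Cx+Du=0\}=\{(x,-D^{-1}Cx)\}$.

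\emph{Strict inequality at $\alpha=0$ via Finsler.} Since $\eta>\rho(A-BD^{-1}C)$, the matrix $(A-BD^{-1}C)/\eta$ is Schur. Hence there is $P\succ0$ with $(A-BD^{-1}C)^\top P(A-BD^{-1}C)-\eta^2P\prec0$, obtained as the solution of a discrete Lyapunov equation with right-hand side $-I$. Denote by $T$ the first block matrix in \eqref{contractivity_lmi:feasibility}, so that $Y^\top T Y=\|Ax+Bu\|_P^2-\eta^2\|x\|_P^2$. Evaluating on the kernel, with $Y=(x,-D^{-1}Cx)$, gives $x^\top[(A-BD^{-1}C)^\top P(A-BD^{-1}C)-\eta^2P]x<0$ for $x\neq0$. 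So $T$ is negative definite on the kernel of the negative semidefinite matrix $[C\ D]^\top\bar M_{22}[C\ D]$. Finsler's lemma, or equivalently a Schur-complement/compactness argument on the unit sphere, then yields $c>0$ large enough that $T+c[C\ D]^\top\bar M_{22}[C\ D]\prec0$.

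\emph{Passing to small $\alpha>0$.} With $P$ and $c$ fixed, the left-hand side of \eqref{contractivity_lmi:feasibility} depends continuously (polynomially) on $\alpha$. The set of negative definite matrices is open, so the strict inequality persists for all sufficiently small $\alpha>0$. Fixing any such $\alpha$ gives the required $P$, $\lambda_i=c\alpha^2\bar\lambda_i\ge0$, and $\alpha$.

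\emph{Main obstacle.} The only delicate step is the Finsler argument. It requires recognizing that $\mathtt{sol}(\mathrm{diag}(0,\bar M_{22}))$ has kernel equal to the graph of the nominal law $u=-D^{-1}Cx$, which uses invertibility of $D$. On that graph the contractivity form reduces to the Lyapunov inequality for $A-BD^{-1}C$.
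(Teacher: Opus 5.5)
Your proof is correct. The scaling claim is handled exactly as in the paper. For feasibility, you use the same two ingredients as the paper:
\begin{itemize}
\item the same $P$, from the Lyapunov inequality for $(A-BD^{-1}C)/\eta$;
\item the fact that the $\bar M_{22}$-part of $\mathtt{sol}$ is negative semidefinite with kernel equal to the graph $u=-D^{-1}Cx$ of the nominal law.
\end{itemize}
Where the two arguments differ is in how they make that term dominate.

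The paper uses the one-parameter family $\lambda_i=\alpha\bar\lambda_i$. The aggregated multiplier is then $\left[\begin{smallmatrix}\alpha\bar M_{11} & \bar M_{12}\\ \bar M_{12}^\top & \alpha^{-1}\bar M_{22}\end{smallmatrix}\right]$. It applies the congruence by $\left[\begin{smallmatrix} I & 0\\ -D^{-1}C & I\end{smallmatrix}\right]$, which turns the $\bar M_{22}$-term into $\alpha^{-1}\mathrm{diag}(0,D^\top\bar M_{22}D)$ and the $(1,1)$ block into the Lyapunov matrix. It then concludes with a direct Schur-complement computation for small $\alpha$ (its ``core for LMI feasibility'' lemma). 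In that route the $\bar M_{12}$ cross terms stay $O(1)$ and are absorbed by the Schur complement.

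Your family $\lambda_i=c\alpha^2\bar\lambda_i$ contains the paper's choice as the slice $c=1/\alpha$. Keeping $c$ fixed instead makes the $\bar M_{11}$ and $\bar M_{12}$ contributions vanish at $\alpha=0$. You can then settle the limit problem with Finsler's lemma and finish by openness of the negative definite cone.

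What each route buys:
\begin{itemize}
\item Your version is more modular. It makes transparent that only $\bar M_{22}\prec 0$ and the nominal Lyapunov inequality on the graph of $u=-D^{-1}Cx$ matter, and it relies on a standard citation.
\item The paper's version is self-contained, since the congruence plus Schur complement effectively reproves the needed Finsler-type statement. It also exhibits an explicit single-parameter choice of multipliers.
\end{itemize}
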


\smallskip

The proof is shown in Appendix~\ref{appendix:proof_implicit_lure}.

\subsection{Sufficient Conditions for Well-Posedness}
\label{sec:well_posedness}

We present two sufficient conditions for Assumption~\ref{assumption:generalized_equation}.
One follows from convex analysis and establishes well-posedness of the OCP for regularized MPC in Cases~1 and 2.

\begin{lemma}[Well-posedness via convex analysis]
\label{lemma:wellposedness_subdifferential}
Consider the generalized equation \eqref{generalized_equation} with $r = \ninput$, $G = I_{\ninput}$, and $D \succ 0$.
Suppose that, for each $t$, there exists a CCP function $f_t: \R^{\ninput} \to \R \cup \{+\infty\}$ such that $\Psi_t(z) = \partial f_t(z)$ for each $z$.
Then Assumption~\ref{assumption:generalized_equation} holds.
\end{lemma}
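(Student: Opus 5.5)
The plan is to identify the generalized equation \eqref{generalized_equation} as the optimality condition of a strongly convex minimization problem. Once that identification is made, existence and uniqueness follow from standard convex analysis.

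With $r=\ninput$ and $G=I_{\ninput}$, the inclusion reads
\[
0 \in Cx + Du + \partial f_t(Fx+u)
\]
for fixed $x$ and $t$. We change variables to $z \coloneqq Fx + u$, which is a bijective affine map on $\R^{\ninput}$. In this variable the inclusion becomes
\[
0 \in Cx - DFx + Dz + \partial f_t(z).
\]
Now define
\[
\phi(z) \coloneqq \tfrac12 z^\top D z + (Cx - DFx)^\top z + f_t(z).
\]
We may use $Dz$ as the gradient of the quadratic term only because $D$ is symmetric. This holds here, since $D\succ 0$ in the paper's usage is understood for symmetric matrices; in the MPC instance, $D=\bar R+\bar B^\top\bar Q\bar B$ is symmetric. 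If one wanted to allow nonsymmetric $D$, the argument would instead go through strong monotonicity, as described in the last paragraph.

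The quadratic part of $\phi$ is finite, differentiable and convex. The subdifferential sum rule therefore applies, because the quadratic has full domain, and it gives
\[
\partial\phi(z) = Dz + Cx - DFx + \partial f_t(z).
\]
Hence $z$ solves the inclusion if and only if $0\in\partial\phi(z)$, that is, if and only if $z$ minimizes $\phi$, by Fermat's rule for convex functions.

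Next, $\phi$ is CCP because $f_t$ is CCP and the quadratic is finite and continuous. It is also $\lambda_{\min}(D)$-strongly convex because $D\succ 0$. To see that a minimizer exists, note that a proper closed convex $f_t$ admits an affine minorant. Then $\phi$ is bounded below by a strongly convex quadratic, so it is coercive. Being lower semicontinuous and proper, $\phi$ attains its minimum, which is finite and reached at a point of $\mathrm{dom} f_t$. Strong convexity then forces the minimizer to be unique. Mapping back through $u=z-Fx$ yields a unique solution $u$ for each $x$ and $t$, which is Assumption~\ref{assumption:generalized_equation}.

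The step needing the most care is existence, specifically justifying coercivity via the affine minorant of $f_t$ and the sum rule for subdifferentials. Both are standard results (e.g., \cite{HHB-PLC:17}).

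If one prefers not to rely on symmetry of $D$, there is an alternative route through monotone operator theory. Since $D\succ 0$, the map $z\mapsto Dz$ is strongly monotone. The operator $\partial f_t$ is maximally monotone because $f_t$ is CCP. Their sum plus a constant is therefore maximally and strongly monotone. By Minty's theorem and strong monotonicity, its inverse is single-valued with full domain, which again gives a unique zero.
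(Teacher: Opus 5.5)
Your proposal is correct and follows essentially the same route as the paper. The paper rewrites the inclusion as $-Cx \in \partial_u \phi(u;x)$ with $\phi(u;x) = f_t(Fx+u) + \frac12 u^\top D u$ and concludes uniqueness from strong convexity, so your change of variables $z = Fx+u$ is only cosmetic. You are more careful than the paper in three respects: you justify existence explicitly (affine minorant and coercivity) along with the subdifferential sum rule, you note that the optimization reformulation needs $D$ symmetric, and you supply the maximal-monotone argument for the nonsymmetric case.
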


\proof
The generalized equation \eqref{generalized_equation} can be rewritten as $-Cx \in \partial_u \phi(u;x)$, where $\phi(u;x) = f(Fx + u) + \frac12 u^\top D u$.
Since $D \succ 0$, $\phi(u;x)$ is strongly convex with respect to $u$.
Therefore, the solution is unique for each $x$ and $t$.
\endproof

The other sufficient condition for Assumption~\ref{assumption:generalized_equation} is given by IMMs and a contractivity LMI, and it certifies the well-posedness of the OCP in Case~3 of regularized MPC.

\begin{proposition}[Well-posedness via contractivity LMI]
\label{prop:well-posedness_contraction}
Consider the implicit Lur'e system \eqref{implicit_lure_system}.
Suppose $D$ in \eqref{implicit_lure_plant} is invertible, and that $\Psi_t$ in \eqref{generalized_equation} admits $M_1, \dots, M_p \in \R^{(r+\ninput)\times(r+\ninput)}$ as IMMs.
Let $\eta \in (0,1)$.
If there exist a positive definite matrix $P \in \R^{\nstate \times \nstate}$ and nonnegative scalars $\lambda_1, \dots, \lambda_p$ such that \eqref{contractivity_lmi_implicit} holds strictly, $\bar{M}_{22} \prec 0$, and $\bar{M}_{12} = 0$, then Assumption~\ref{assumption:generalized_equation} holds.
\end{proposition}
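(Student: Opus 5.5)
The plan is to treat uniqueness and existence separately. Uniqueness follows from the strict inequality \eqref{contractivity_lmi_implicit} alone. Existence comes from turning the generalized equation \eqref{generalized_equation} into a fixed-point problem that is a contraction in a suitable weighted norm. Throughout, write $\bar M \coloneqq \sum_{i=1}^p \lambda_i M_i$, which is itself an IMM for $\Psi_t$ because the $\lambda_i$ are nonnegative. Partition $\bar M$ into blocks $\bar M_{11}, \bar M_{12}, \bar M_{22}$ conformally with $(z,g)$.

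\emph{Uniqueness.} Fix $x$ and $t$, and suppose $u^{(1)}, u^{(2)}$ both solve \eqref{generalized_equation}. Set $\delta \coloneqq u^{(1)} - u^{(2)}$.
\begin{enumerate}
\item Arguing exactly as in the proof of Proposition~\ref{prop:implicit_lure}, with the same $x$ in both slots, the IMM condition gives $Y^\top \mathtt{sol}(\bar M) Y \ge 0$ for $Y = [0;\delta]$.
\item Evaluating the strict version of \eqref{contractivity_lmi_implicit} at $Y$ gives $\delta^\top B^\top P B \delta + Y^\top \mathtt{sol}(\bar M) Y < 0$ whenever $\delta \neq 0$.
\item Since $B^\top P B \succeq 0$, the two inequalities are incompatible unless $\delta = 0$.
\end{enumerate}

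\emph{Single-valuedness and Lipschitz-type bound of $\Psi_t$.} Because $\bar M_{12}=0$, the IMM condition reads $(z^{(1)}-z^{(2)})^\top \bar M_{11}(z^{(1)}-z^{(2)}) \ge (g^{(1)}-g^{(2)})^\top(-\bar M_{22})(g^{(1)}-g^{(2)})$.
\begin{enumerate}
\item Taking $z^{(1)}=z^{(2)}$ and using $-\bar M_{22}\succ 0$ shows that $\Psi_t(z)$ contains at most one point.
\item I will assume, as is implicit in the setting, that $\Psi_t(z)$ is nonempty for every $z$. Then $\Psi_t$ is a function $\psi_t$ satisfying $\|\psi_t(z)-\psi_t(z')\|_{-\bar M_{22}}^2 \le (z-z')^\top \bar M_{11}(z-z')$.
\end{enumerate}
I expect handling possibly empty values of $\Psi_t$ to be the main obstacle. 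If the statement intends general set-valued $\Psi_t$, an additional nonemptiness (or maximality) hypothesis must be invoked here.

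\emph{Existence.} Since $D$ is invertible, \eqref{generalized_equation} is equivalent to the fixed-point equation $u = T(u) \coloneqq -D^{-1}(Cx + \psi_t(Fx+Gu))$.
\begin{enumerate}
\item Introduce the weight $W \coloneqq -D^\top \bar M_{22} D \succ 0$.
\item For any $u, u'$ with $\delta = u-u'$, the bound above gives $\|T(u)-T(u')\|_W^2 = \|\psi_t(Fx+Gu)-\psi_t(Fx+Gu')\|_{-\bar M_{22}}^2 \le \delta^\top G^\top \bar M_{11} G \delta$.
\item The lower-right block of the strict inequality \eqref{contractivity_lmi_implicit} is $B^\top P B + G^\top \bar M_{11} G + D^\top \bar M_{22} D \prec 0$. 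Combined with $B^\top P B\succeq 0$, this yields $G^\top \bar M_{11} G \prec W$.
\item By compactness of the $W$-unit sphere, there is $\kappa<1$ with $G^\top \bar M_{11} G \preceq \kappa^2 W$. Hence $T$ is a $\kappa$-contraction in $\|\cdot\|_W$.
\item The Banach fixed-point theorem then gives existence, and it independently recovers uniqueness.
\end{enumerate}
Together these steps establish Assumption~\ref{assumption:generalized_equation} for every $x$ and $t$.
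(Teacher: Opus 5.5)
Your proof is correct, but it takes a different route from the paper for existence. The paper does not argue uniqueness separately. It uses the same weight $W=-D^\top\bar M_{22}D$ you chose and reads off from the lower-right block of the strict LMI that $G^\top\bar M_{11}G\preceq(1-2\mu)W$ for some $\mu\in(0,\tfrac12)$. That is the certificate \eqref{condition:strong_monotonicity} of Lemma~\ref{lemma:well-posedness_imm}. The lemma then shows that the residual map $u\mapsto u+D^{-1}\Psi_t(Fx+Gu)$ is $\mu$-strongly monotone in the $W$-metric. Using its Lipschitz continuity (hence maximal monotonicity), monotone-operator theory gives a unique solution of the equation equivalent to \eqref{generalized_equation}.

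You instead note that the same inequality $G^\top\bar M_{11}G\prec W$ makes $u\mapsto D^{-1}\psi_t(Fx+Gu)$ a strict contraction in $\|\cdot\|_W$, and you apply Banach's theorem to $T$. When $\bar M_{12}=0$ the two conditions are interchangeable: a $\kappa$-contractive perturbation of the identity is $(1-\kappa)$-strongly monotone, and the paper's certificate gives a contraction with $\kappa=\sqrt{1-2\mu}$.

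What each route buys:
\begin{itemize}
\item Your argument is more elementary and self-contained, and it avoids the maximal-monotonicity step.
\item It also gives a convergent iteration $u^{k+1}=T(u^k)$ for actually computing the solution.
\item The paper's lemma is stated for general $W$, $\mu$, and $\bar M_{12}\neq 0$, so it can be reused beyond this proposition.
\item Your separate uniqueness step is a small bonus: it shows uniqueness follows from the strict LMI alone, without needing $\bar M_{12}=0$ or $\bar M_{22}\prec 0$.
\end{itemize}

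The nonemptiness issue you flag is not a gap relative to the paper. Lemma~\ref{lemma:well-posedness_imm} explicitly assumes $\Psi_t(z)\neq\emptyset$ before identifying $\Psi_t$ with a function, and the proposition inherits this hypothesis silently. Without it the claim is false, since $\Psi_t\equiv\emptyset$ vacuously admits every IMM. The assumption is automatic in the paper's only application, Theorem~\ref{thm:lipschitz:feasibility_wellposedness}, where $\Psi_t$ is a single-valued gradient.
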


The proof is shown in Appendix~\ref{appendix:proof_well-posedness}.

\section{Main Results: Contractivity Analysis of Regularized MPC}
\label{sec:main_results}

Using the methods developed in \S\ref{sec:implicit_lure}, we analyze the contractivity of the regularized MPC for the three cases defined in \S\ref{sec:problem_setup}.
The results are summarized in Table~\ref{table:summary}.

\begin{table*}[t]
\centering
\resizebox{.99\textwidth}{!}{
\begin{tabular}{|c|c|c|c|c|}
\hline
    Case &
    \begin{tabular}{c}
    Assumptions on the regularizer $\Vreg(X,U,t)$
    \end{tabular}
    &
    \begin{tabular}{c}
    Contractivity LMI
    \end{tabular}
    & 
    \begin{tabular}{c}
    Numerical studies
    \end{tabular}
\\
\hline
    1
    &
    \begin{tabular}{c}
    Sum of regularizing stage costs $\ellreg (u,t)$ \\
    which are convex and smooth
    \end{tabular}
    &
    Equation~\eqref{contractivity_lmi:convex_smooth} in Theorem~\ref{thm:contractivity_lmi:convex_smooth}
    &
    \begin{tabular}{c}
    Input soft penalty \\
    Fig.~\ref{fig:smooth} in \S\ref{sec:numerical_example_soft_input_penalty}
    \end{tabular}
\\
\hline
    2
    &
    \begin{tabular}{c}
    Sum of regularizing stage costs $\ellreg (u,t)$ \\
    which are convex, closed, and proper
    \end{tabular}
    &
    Equation~\eqref{contractivity_lmi:ccp} in Theorem~\ref{thm:contractivity_lmi:ccp}
    &
    \begin{tabular}{c}
    Input constraints
    \\
    Fig.~\ref{fig:tracking_convex} in \S\ref{sec:numerical_example_hard_input_penalty}
    \\
    \hline
    Sparse MPC
    \\
    Fig.~\ref{fig:consensus_sparse} in \S\ref{sec:numerical_example_sparse}
    \end{tabular}
\\
\hline
    3
    &
    The gradient of the regularizer is Lipschitz
    &
    Equation~\eqref{contractivity_lmi:lipschitz} in Theorem~\ref{thm:contractivity:lipschitz}
    &
    \begin{tabular}{c}
    State soft penalty \\
    Fig.~\ref{fig:full} in \S\ref{sec:numerical_example_soft_state_penalty}
    \end{tabular}
\\
\hline
\end{tabular}
}
\caption{Summary of our results.
\label{table:summary}}
\end{table*}

\subsection{Case 1: Convex smooth regularizers}

In this case, the gradient of the regularizing cost admits the IMM in Example~\ref{example:multiplier_convex_analysis}, which leads to the following theorem.

\begin{theorem}[Contractivity for convex smooth regularizers]
\label{thm:contractivity_lmi:convex_smooth}
Consider the regularized MPC under Assumptions~\ref{assumption:positive_definite_cost} and \ref{assumption:convex_smooth_regularizer}.
Let $\eta \in (0,1)$.
If there exist a positive definite matrix $P$ and nonnegative scalars $\lambda_1, \ldots, \lambda_H$ satisfying
\begin{align}
    \label{contractivity_lmi:convex_smooth}
    \begin{aligned}
    &
    \begin{bmatrix}
        A^\top P A - \eta^2 P & A^\top P B \Pi_1 \\
        \Pi_1^\top B^\top P A & \Pi_1^\top B^\top P B \Pi_1
    \end{bmatrix}
    \\
    &
    +
    \begin{bmatrix}
        0 & I_{H\ninput} \\ -C & -D
    \end{bmatrix}^\top
    \begin{bmatrix}
        0
        &
        \Lambda
        \\
        \Lambda
        &
        - \frac{2}{L} \Lambda
    \end{bmatrix}
    \begin{bmatrix}
        0 & I_{H\ninput} \\ -C & -D
    \end{bmatrix}
    \preceq 0
    ,
    \end{aligned}
\end{align}
where $\Lambda \coloneqq \diag{ \lambda_1, \dots, \lambda_H } \otimes I_{\ninput}$,
then the system is strongly contracting with factor $\eta$.
Furthermore, suppose $A_\mathrm{nom}$ in \eqref{A_nom} is Schur.
Then, for any $\eta \in (\rho(A_\mathrm{nom}), 1)$, there exist a positive definite matrix $P$, nonnegative scalars $\lambda_1, \dots, \lambda_H$, and a positive scalar $L$ satisfying \eqref{contractivity_lmi:convex_smooth}.
\end{theorem}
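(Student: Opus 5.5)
We cast the regularized MPC as an implicit Lur'e system and then invoke Corollary~\ref{cor:implicit_lure_contraction} and Proposition~\ref{prop:contractivity_feasibility}.

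\emph{Lur'e reformulation.} Under Assumption~\ref{assumption:convex_smooth_regularizer}, the OCP \eqref{ocp_expanded} is $\min_U U^\top Cx + \frac12 U^\top DU + \sum_h \ellreg(u_h,t+h)$. Its first-order optimality condition is $0 = Cx + DU + \Psi_t(U)$ with $\Psi_t(U) \coloneqq [\nabla\ellreg(u_1,t+1);\dots;\nabla\ellreg(u_H,t+H)]$. This is \eqref{generalized_equation} with decision variable $U\in\R^{H\ninput}$, $F=0$, and $G=I_{H\ninput}$. Since $\Psi_t=\partial f_t$ for the CCP function $f_t(U)=\sum_h\ellreg(u_h,t+h)$ and $D\succ0$ by Assumption~\ref{assumption:positive_definite_cost}, Lemma~\ref{lemma:wellposedness_subdifferential} gives a unique solution $U^\ast_t(x)$, so Assumption~\ref{assumption:generalized_equation} holds.

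\emph{Block multipliers.} The $h$-th block of $\Psi_t$ is $\nabla\ellreg(\cdot,t+h)$, which by Example~\ref{example:multiplier_convex_analysis} satisfies the cocoercivity inequality $(\Delta u_h)^\top \Delta g_h - \frac1L\|\Delta g_h\|^2\ge0$. Multiplying the $h$-th inequality by $\lambda_h\ge0$ and summing shows that $\Psi_t$ admits $\begin{bmatrix}0&\Lambda\\ \Lambda&-\frac2L\Lambda\end{bmatrix}$ as an IMM (up to the factor $2$, which does not affect the sign). Proposition~\ref{prop:implicit_lure} then shows that $U^\ast_t$ admits $\mathtt{sol}$ of this matrix, which is exactly the second term in \eqref{contractivity_lmi:convex_smooth}.

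\emph{Sufficiency.} The closed loop is $x^+ = Ax + B\Pi_1 U^\ast_t(x)$. We rerun the proof of Lemma~\ref{lemma:contraction_multiplier} with $Y=[\Delta x;\Delta U]$ in place of $[\Delta x;\Delta u]$. Pre- and post-multiplying \eqref{contractivity_lmi:convex_smooth} by $Y$ and using the IMM inequality gives $\|A\Delta x + B\Pi_1\Delta U\|_P^2 \le \eta^2\|\Delta x\|_P^2$. This is strong contractivity with factor $\eta$. Equivalently, this step is Corollary~\ref{cor:implicit_lure_contraction} applied to the lifted system with input matrix $B\Pi_1$.

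\emph{Feasibility.} We apply Proposition~\ref{prop:contractivity_feasibility} to the lifted system $(A,B\Pi_1,C,D)$ with $F=0$, $G=I$. The hypotheses check as follows.
\begin{itemize}
\item $D$ is invertible by Assumption~\ref{assumption:positive_definite_cost}.
\item $A - B\Pi_1 D^{-1}C = A_\mathrm{nom}$, which is Schur by assumption.
\item Take $\Psi^\mathrm{base}$ to be a $1$-smooth convex gradient and set $\alpha=L$. This is the main subtlety. A convex $L$-smooth function is not literally $L$ times a fixed base function, so we parametrize directly by $L$. Setting $\lambda_h=\lambda$ for all $h$, the multiplier $\lambda\begin{bmatrix}0&I\\I&-\frac2LI\end{bmatrix}$ matches the scaled form $\begin{bmatrix}M_{11}&\frac1\alpha M_{12}\\ \frac1\alpha M_{12}^\top&\frac1{\alpha^2}M_{22}\end{bmatrix}$ with $M_{11}=0$, $M_{12}=\lambda L I$, $M_{22}=-2\lambda L I$.
\item The requirement $\bar M_{22}\prec0$ holds because $\lambda,L>0$.
\end{itemize}

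The obstacle is making sure the proposition's proof covers this parametrization. I would therefore reprove it directly with $F=0$, which is easy. Take $P$ as the Lyapunov solution of $A_\mathrm{nom}^\top P A_\mathrm{nom} - \eta^2P \prec 0$. Change variables to $\Delta U = -D^{-1}C\Delta x + w$. After this substitution, the $w$-quadratic part of the multiplier term is $-\frac2L\lambda\, w^\top D\Lambda' D w$ plus cross terms of order $\lambda$. Pick $\lambda$ large so the negative $w$ term dominates the $O(1)$ plant terms. Then take $L$ small enough that $\frac{\lambda}{L}$ dominates the $O(\lambda)$ cross terms. A Schur complement then gives strict feasibility of \eqref{contractivity_lmi:convex_smooth}.
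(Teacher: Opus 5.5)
Your proposal is correct and follows essentially the same route as the paper. Both use the Lur'e reformulation with $F=0$, $G=I_{H\ninput}$, well-posedness from Lemma~\ref{lemma:wellposedness_subdifferential}, blockwise cocoercivity multipliers weighted by $\lambda_h$ (packaged in the paper as Lemma~\ref{lemma:multiplier_repeated_structure}), Corollary~\ref{cor:implicit_lure_contraction} on the lifted plant $x^+=Ax+B\Pi_1U$, and feasibility via Proposition~\ref{prop:contractivity_feasibility}. Your worry about the parametrization is unnecessary: the feasibility claim is a pure matrix statement, and in any case $\nabla\ellreg = L\,\nabla(\ellreg/L)$ with $\ellreg/L$ convex and $1$-smooth. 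So the paper simply applies Proposition~\ref{prop:contractivity_feasibility} with a fixed base multiplier and then rescales ($L\alpha$ in place of $L$), and your direct reproof via $\Delta U=-D^{-1}C\Delta x+w$ with $L\to0$ is exactly the congruence by $T$ plus Lemma~\ref{lemma:feasibility_proof}; taking $\lambda$ large is not needed, since any fixed $\lambda>0$ works once $L$ is small.
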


See Appendix \ref{appendix:proof_convex_smooth} for the proof of Theorem~\ref{thm:contractivity_lmi:convex_smooth}.
Since \eqref{contractivity_lmi:convex_smooth} is an LMI in $(P,\lambda_1,\dots,\lambda_H)$, its feasibility can be tested efficiently by semidefinite programming, and any feasible solution certifies contractivity.

\begin{example}
\label{example:design:input_soft_penalty}
Consider Example~\ref{example:input_soft_penalty}.
The regularizing cost $\ellreg$ in \eqref{ellreg:input_soft_penalty} is convex and $L$-smooth.
According to Theorem~\ref{thm:contractivity_lmi:convex_smooth}, \eqref{contractivity_lmi:convex_smooth} is sufficient for the contractivity of the regularized MPC and is feasible for a sufficiently small regularization scale $L$.
\end{example}

\subsection{Case 2: Convex closed proper regularizers}
\label{section:contractivity:ccp}

The subdifferential of a CCP function is monotone and admits the IMM in Example~\ref{example:multiplier_monotone}, yielding contractivity in Case~2.

\begin{theorem}[Contractivity for CCP regularizers]
\label{thm:contractivity_lmi:ccp}
Consider the regularized MPC under Assumptions~\ref{assumption:positive_definite_cost} and \ref{assumption:convex_closed_proper_regularizer}.
Let $\eta \in (0,1)$.
If there exist a positive definite matrix $P$ and nonnegative scalars $\lambda_1, \ldots, \lambda_H$ satisfying
\begin{align}
    \label{contractivity_lmi:ccp}
    \begin{aligned}
    &
    \begin{bmatrix}
        A^\top P A - \eta^2 P & A^\top P B \Pi_1 \\
        \Pi_1^\top B^\top P A & \Pi_1^\top B^\top P B \Pi_1
    \end{bmatrix}
    \\
    &
    +
    \begin{bmatrix}
        0 & I_{H\ninput} \\ -C & -D
    \end{bmatrix}^\top
    \begin{bmatrix}
        0
        &
        \Lambda
        \\
        \Lambda
        &
        0
    \end{bmatrix}
    \begin{bmatrix}
        0 & I_{H\ninput} \\ -C & -D
    \end{bmatrix}
    \preceq 0
    ,
    \end{aligned}
\end{align}
where $\Lambda \coloneqq \diag{ \lambda_1, \dots, \lambda_H } \otimes I_{\ninput}$,
then the system is strongly contracting with factor $\eta$.
\end{theorem}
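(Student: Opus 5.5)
We cast the closed-loop regularized MPC as an implicit Lur'e system of the form \eqref{implicit_lure_system} in the lifted input $U\in\R^{H\ninput}$, and then apply Corollary~\ref{cor:implicit_lure_contraction} with IMMs obtained from monotonicity.

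\textbf{Reformulation.} Under Assumption~\ref{assumption:convex_closed_proper_regularizer}, the OCP \eqref{ocp_expanded} reads $\min_U U^\top Cx + \frac12 U^\top D U + \sum_{h}\ellreg(u_h,t+h)$. Since $D\succ 0$ by Assumption~\ref{assumption:positive_definite_cost} and $\Vreg(\cdot,t)$ is CCP, the objective is strongly convex, so the minimizer $U^\ast_t(x)$ exists and is unique. Two facts are needed here:
\begin{itemize}
\item the optimality condition is $0\in Cx+DU+\Psi_t(U)$ with $\Psi_t(U)=\partial\Vreg(U,t)=\partial\ellreg(u_1,t+1)\times\cdots\times\partial\ellreg(u_H,t+H)$, using the separable sum rule for subdifferentials;
\item this inclusion is also sufficient for optimality.
\end{itemize}
The inclusion has the form \eqref{generalized_equation} with $F=0$, $G=I_{H\ninput}$ and $r=H\ninput$. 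Lemma~\ref{lemma:wellposedness_subdifferential} then gives Assumption~\ref{assumption:generalized_equation}, with $f_t=\Vreg(\cdot,t)$. The plant is driven by $u=\Pi_1U$, so the closed loop is $x(t+1)=Ax+B\Pi_1U$, i.e., \eqref{implicit_lure_plant} with $B$ replaced by $B\Pi_1$.

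\textbf{Multipliers.} For each $h$, $\partial\ellreg(\cdot,t+h)$ is monotone because $\ellreg(\cdot,t+h)$ is CCP. Define $E_h$ as the $h$-th block selector. The set-valued map $U\mapsto\Psi_t(U)$ then satisfies $(U^{(1)}-U^{(2)})^\top(E_h^\top E_h)(g^{(1)}-g^{(2)})\ge 0$ for each $h$, since only the $h$-th block components enter. It therefore admits the IMM $M_h=\begin{bmatrix}0 & E_h^\top E_h\\ E_h^\top E_h & 0\end{bmatrix}$, a blockwise refinement of Example~\ref{example:multiplier_monotone}. Taking $\sum_h\lambda_h M_h$ gives exactly $\begin{bmatrix}0&\Lambda\\ \Lambda&0\end{bmatrix}$ with $\Lambda=\diag{\lambda_1,\dots,\lambda_H}\otimes I_\ninput$.

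\textbf{Conclusion.} By Proposition~\ref{prop:implicit_lure}, $\mathtt{sol}(\cdot)$ applied to this combination with $F=0$, $G=I$ is precisely the second term in \eqref{contractivity_lmi:ccp}. Applying Lemma~\ref{lemma:contraction_multiplier} to the lifted input $U^\ast_t(x)$ with input matrix $B\Pi_1$ yields the first block matrix of \eqref{contractivity_lmi:ccp}. The corresponding quadratic form evaluated at $Y=[x^{(1)}-x^{(2)};U^\ast_t(x^{(1)})-U^\ast_t(x^{(2)})]$ equals $\|x^{(1)}(t+1)-x^{(2)}(t+1)\|_P^2-\eta^2\|x^{(1)}-x^{(2)}\|_P^2$. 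Hence \eqref{contractivity_lmi:ccp} implies strong contraction with factor $\eta$.

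\textbf{Main obstacle.} The only delicate points are the set-valued, extended-real setting. One must justify the subdifferential sum rule, i.e., that $0\in Cx+DU+\partial\Vreg(U,t)$ characterizes the minimizer. This holds because the quadratic part is finite and differentiable everywhere and $\Vreg$ is separable, so no constraint qualification is needed. One must also check that the blockwise monotonicity multipliers remain valid for set-valued selections $g^{(i)}$, which follows directly from Definition~\ref{def:incremental_multiplier_set_valued}.
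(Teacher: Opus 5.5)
Your proposal is correct and matches the paper's proof step for step. The paper also writes the closed loop as the implicit Lur'e system $x(t+1)=Ax+B\Pi_1U$, $0\in Cx+DU+\Psi_t(U)$ with $F=0$, $G=I_{H\ninput}$, gets well-posedness from Lemma~\ref{lemma:wellposedness_subdifferential}, builds the blockwise monotonicity multipliers via Lemma~\ref{lemma:multiplier_repeated_structure} (your $E_h$ are its block selectors $\Pi_h$), sums them to $\begin{bmatrix}0&\Lambda\\ \Lambda&0\end{bmatrix}$, and concludes with Corollary~\ref{cor:implicit_lure_contraction}; your remarks on the separable subdifferential sum rule just make explicit a step the paper takes for granted.
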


See Appendix \ref{appendix:proof_ccp} for the proof of Theorem~\ref{thm:contractivity_lmi:ccp}.
As in Examples \ref{example:input_hard_constraint} and \ref{example:sparse}, the CCP regularization includes hard constraints of inputs and sparse control.
Since \eqref{contractivity_lmi:ccp} is an LMI in $(P,\lambda_1,\dots,\lambda_H)$, its feasibility can be tested efficiently by semidefinite programming, and any feasible solution certifies contractivity.

\begin{remark}
\label{remark:open_loop_stability}
We note that \eqref{contractivity_lmi:ccp} requires the open-loop stability of the plant, i.e., $A^\top P A - \eta^2 P \preceq 0$,
since Theorem~\ref{thm:contractivity_lmi:ccp} certifies global contractivity even in the case of $u=0$, i.e., $\mathcal{U}_t = \{0\}$ in Example~\ref{example:input_hard_constraint}.
This limitation could be relaxed if we only require local contractivity, which is a topic of future work.
Furthermore, if the regularizer is also $L$-smooth, the problem falls under Case~1, in which closed-loop contractivity can be guaranteed even when the open-loop system is unstable.
\end{remark}

\begin{remark}
\label{remark:novelty:ccp}
As used in \cite[Proposition 2]{SH-GB-FD-DLMP:26}, convex analysis and monotone operator theory show that the inverse mapping of a $\mu$-strongly monotone operator is $\mu$-cocoercive \cite[Ex. 22.7]{HHB-PLC:17}.
Our analysis captures richer information in two respects:
i) Theorem~\ref{thm:contractivity_lmi:ccp} allows coefficients $\lambda$s, since the regularizer is assumed to be a sum of stage-wise regularizing costs; and
ii) the matrix $D$ in the generalized equation is explicitly exploited, whereas $\mu$-monotonicity characterizes $D$ solely through its maximum eigenvalue.
See Appendix \ref{appendix:comparison_multipliers} for a detailed comparison.
\end{remark}

\subsection{Case 3: Lipschitz-gradient regularizers}

In Case~3, the gradient of the regularizer admits the IMM in Example~\ref{example:multiplier_lipschitz}, yielding the following theorem.

\begin{theorem}[Contractivity for Lipschitz-gradient regularizers]
\label{thm:contractivity:lipschitz}
Consider the regularized MPC under Assumptions~\ref{assumption:lipschitz:well-posedness} and \ref{assumption:gradient_lipschitz_regularizer}.
Let $\eta \in (0,1)$ and $\gamma = L^{-2}$, where $L$ is the Lipschitz constant of the gradient of the regularizer.
Suppose there exists a positive definite matrix $P$ such that
\begin{align}
    \label{contractivity_lmi:lipschitz}
    \begin{aligned}
    &
    \begin{bmatrix}
        A^\top P A - \eta^2 P & A^\top P B \Pi_1 \\
        \Pi_1^\top B^\top P A & \Pi_1^\top B^\top P B \Pi_1
    \end{bmatrix}
    \\
    &
    +
    \begin{bmatrix}
        \bar{A} & \bar{B}
        \\
        0 & I_{H\ninput}
        \\
        -C & -D
    \end{bmatrix}^\top
    \begin{bmatrix}
    I_{H\nstate+H\ninput} & 0
    \\
    0 & - \gamma I_{H\ninput}
    \end{bmatrix}
    \begin{bmatrix}
        \bar{A} & \bar{B}
        \\
        0 & I_{H\ninput}
        \\
        -C & -D
    \end{bmatrix}
    \preceq 0
    .
    \end{aligned}
\end{align}
Then the system is strongly contracting with factor $\eta$.
\end{theorem}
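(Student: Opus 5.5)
The plan is to cast the closed loop of the regularized MPC under Assumptions~\ref{assumption:lipschitz:well-posedness} and \ref{assumption:gradient_lipschitz_regularizer} as an implicit Lur'e system \eqref{implicit_lure_system} and then invoke Corollary~\ref{cor:implicit_lure_contraction} with a single multiplier.

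The first step is the reformulation. I take the implicit variable to be the whole horizon input $U\in\R^{H\ninput}$ rather than $u$. The generalized equation is the first-order condition \eqref{equation:implicit}, i.e., $0 = Cx + DU + \psi_t(\bar{A}x+\bar{B}U, U)$. This has the form \eqref{generalized_equation} with $F=[\bar{A};0]$, $G=[\bar{B}; I_{H\ninput}]$, $r=H\nstate+H\ninput$, and the single-valued map $\Psi_t(z)=\{\psi_t(z)\}$. The plant input is $u=\Pi_1 U$, so the dynamics read $x(t+1)=Ax(t)+B\Pi_1 U(t)$, and $B$ is replaced by $B\Pi_1$.

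Next I check well-posedness. By Assumption~\ref{assumption:lipschitz:well-posedness}, the OCP has a unique optimizer and no other stationary points. Since $\Vreg$ is differentiable, every solution of \eqref{equation:implicit} is a stationary point, and the optimizer is one. Hence \eqref{equation:implicit} has exactly one solution $U^\ast_t(x)$, which equals the MPC optimizer. This gives Assumption~\ref{assumption:generalized_equation} for the reformulated system, and the closed loop of the implicit Lur'e system coincides with the MPC closed loop \eqref{mpc_control_law}.

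For the multiplier, Assumption~\ref{assumption:gradient_lipschitz_regularizer} and Example~\ref{example:multiplier_lipschitz} show that $\psi_t$ admits $M=\diag{I_{H\nstate+H\ninput}, -\gamma I_{H\ninput}}$ with $\gamma=L^{-2}$, uniformly in $t$. Proposition~\ref{prop:implicit_lure} then gives the IMM $\mathtt{sol}(M)$ for $U^\ast_t$. Stacking $[F\ G; -C\ -D]$ produces exactly the block matrix $[\bar{A}\ \bar{B};\, 0\ I;\, -C\ -D]$ appearing in \eqref{contractivity_lmi:lipschitz}.

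Finally, I apply Lemma~\ref{lemma:contraction_multiplier} with input matrix $B\Pi_1$ and $\lambda_1=1$. Alternatively, with $Y=[x^{(1)}-x^{(2)}; U^\ast_t(x^{(1)})-U^\ast_t(x^{(2)})]$, the quadratic form of the first block in \eqref{contractivity_lmi:lipschitz} equals $\|x^{(1)}(t+1)-x^{(2)}(t+1)\|_P^2-\eta^2\|x^{(1)}-x^{(2)}\|_P^2$. This is at most $-Y^\top\mathtt{sol}(M)Y\le 0$, which is the contraction inequality \eqref{eq:contraction_definition}. The multiplier enters with weight one, which is harmless because scaling $P$ is equivalent to scaling $\lambda$.

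The main subtlety is the well-posedness step. One must ensure the implicit solution mapping is the MPC law, i.e., that no spurious stationary point is selected. Assumption~\ref{assumption:lipschitz:well-posedness} is stated precisely to supply this, so I expect the remaining steps to be bookkeeping.
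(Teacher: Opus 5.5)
Your proposal is correct and follows essentially the same route as the paper. Like the paper, you write the closed loop as $x(t+1)=Ax(t)+B\Pi_1U(t)$ with the first-order condition \eqref{equation:implicit} as the generalized equation ($F=[\bar{A};0]$, $G=[\bar{B};I_{H\ninput}]$), take the IMM $\diag{I_{H\nstate+H\ninput},-L^{-2}I_{H\ninput}}$ from Example~\ref{example:multiplier_lipschitz}, obtain Assumption~\ref{assumption:generalized_equation} from Assumption~\ref{assumption:lipschitz:well-posedness}, and apply Corollary~\ref{cor:implicit_lure_contraction} with $p=1$, $\lambda_1=1$; your observation that the optimizer is itself a stationary point just makes explicit a step the paper states in one line.
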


See Appendix \ref{appendix:proof_lipschitz} for the proof of Theorem~\ref{thm:contractivity:lipschitz}.
The matrix inequality \eqref{contractivity_lmi:lipschitz} is an LMI in $P$ and $\gamma = L^{-2}$.
Therefore, by solving a semidefinite program that minimizes $\gamma$ subject to the LMI constraints, one can search for the largest $L$ for which the closed-loop system is contracting.
Furthermore, the feasibility of the LMI and the well-posedness of the regularized MPC are guaranteed by the following theorem.

\begin{theorem}[Feasibility of LMI and well-posedness of OCP]
\label{thm:lipschitz:feasibility_wellposedness}
Consider the regularized MPC under Assumptions~\ref{assumption:positive_definite_cost} and \ref{assumption:gradient_lipschitz_regularizer}.
Suppose $A_\mathrm{nom}$ in \eqref{A_nom} is Schur.
Then, for any $\eta \in (\rho(A_\mathrm{nom}), 1)$, there exist a positive definite matrix $P$ and a positive scalar $\gamma$ such that the strict version of \eqref{contractivity_lmi:lipschitz} holds.
Furthermore, Assumption~\ref{assumption:lipschitz:well-posedness} holds if the regularizer is scaled so that the Lipschitz constant of its gradient is $L=\gamma^{-\frac12}$.
\end{theorem}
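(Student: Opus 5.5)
We would obtain both claims by casting the regularized MPC as an implicit Lur'e system \eqref{implicit_lure_system} and then invoking Proposition~\ref{prop:contractivity_feasibility} and Proposition~\ref{prop:well-posedness_contraction}. The state is $x$ and the "input" is $U\in\R^{H\ninput}$. The plant matrices are $(A, B\Pi_1)$, and $C,D$ are those of \eqref{ocp_expanded}. We take $F\coloneqq[\bar A;0]$ and $G\coloneqq[\bar B; I_{H\ninput}]$, so that $Fx+GU=[X;U]$, and $\Psi_t\coloneqq\psi_t$ from \eqref{psi_gradient_lipschitz}, viewed as single-valued. With this dictionary the first-order condition \eqref{equation:implicit} is exactly \eqref{generalized_equation}, and $\mathtt{sol}(\cdot)$ of the multiplier $\diag{I,-\gamma I}$ reproduces the second term of \eqref{contractivity_lmi:lipschitz}. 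Moreover $A-(B\Pi_1)D^{-1}C=A_\mathrm{nom}$, which is Schur by hypothesis.

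For feasibility, we write $\psi_t=\alpha\psi^\mathrm{base}_t$ with $\psi^\mathrm{base}_t$ $1$-Lipschitz. By Example~\ref{example:multiplier_lipschitz} it admits the single IMM $\diag{I,-I}$, so $M_{11}=I$, $M_{12}=0$ and $M_{22}=-I\prec0$. Proposition~\ref{prop:contractivity_feasibility} (with $p=1$, $\bar\lambda_1=1$) then yields, for any $\eta\in(\rho(A_\mathrm{nom}),1)$, some $P\succ0$, $\lambda\ge0$ and $\alpha>0$ satisfying \eqref{contractivity_lmi:feasibility} strictly with multiplier $\lambda\,\diag{I,-\alpha^{-2}I}$. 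Next we argue $\lambda>0$. If $\lambda=0$, the lower-right block $\Pi_1^\top B^\top PB\Pi_1\succeq0$ would have to be negative definite, which is impossible. Dividing the strict inequality by $\lambda$ and renaming $P/\lambda$ as $P$ gives the strict version of \eqref{contractivity_lmi:lipschitz} with $\gamma\coloneqq\alpha^{-2}>0$.

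For well-posedness, we scale the regularizer so that $L=\gamma^{-1/2}$. Then the multiplier $\diag{I,-\gamma I}$ is a valid IMM for $\psi_t$ and \eqref{contractivity_lmi:lipschitz} holds strictly. We have $D\succ0$ by Assumption~\ref{assumption:positive_definite_cost}, and the relevant blocks satisfy $\bar M_{22}=-\gamma I\prec0$ and $\bar M_{12}=0$. Proposition~\ref{prop:well-posedness_contraction} then gives uniqueness of the solution of \eqref{equation:implicit}, i.e., there is exactly one stationary point of \eqref{ocp_expanded}.

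The delicate step is upgrading "unique stationary point" to "unique optimal solution", since Assumption~\ref{assumption:lipschitz:well-posedness} also demands that a minimizer exists. We would extract this from the $(2,2)$ block of the strict LMI. That block reads
\[
\Pi_1^\top B^\top PB\Pi_1+\bar B^\top\bar B+I-\gamma D^2\prec0 ,
\]
which implies $L^2\bigl(1+\|\bar B\|^2\bigr)<\lambda_{\min}(D)^2$. Consequently the map $U\mapsto\psi_t(\bar Ax+\bar BU,U)$ is Lipschitz with constant $L\|[\bar B;I]\|<\lambda_{\min}(D)$. The gradient $U\mapsto Cx+DU+\psi_t(\cdot)$ of the objective is therefore strongly monotone, so the objective is strongly convex in $U$. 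Strong convexity gives existence of a minimizer, which must then coincide with the unique stationary point. In fact this observation alone also gives uniqueness, so it can serve as a cross-check on the appeal to Proposition~\ref{prop:well-posedness_contraction}.
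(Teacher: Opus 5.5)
Up to and including the appeal to Proposition~\ref{prop:well-posedness_contraction}, your argument is the paper's proof. It uses the same $F=[\bar A;0]$ and $G=[\bar B;I_{H\ninput}]$, and the same application of Proposition~\ref{prop:contractivity_feasibility} with $\bar\lambda_1=1$. It also has the same argument that $\lambda\neq0$ from the lower-right block, the same rescaling by $\lambda$, and the same check $\bar M_{12}=0$, $\bar M_{22}\prec0$. (Normalizing the base map to be $1$-Lipschitz only turns the paper's $\gamma=L^{-2}\alpha^{-2}$ into your $\alpha^{-2}$.)

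You are also right that Assumption~\ref{assumption:lipschitz:well-posedness} needs a minimizer to exist. The paper does not address this; it passes directly from unique solvability of \eqref{equation:implicit} to that assumption.

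The step you use to settle it, however, is invalid. After dropping $\Pi_1^\top B^\top PB\Pi_1\succeq0$, the $(2,2)$ block only yields the Loewner inequality $L^2(\bar B^\top\bar B+I)\prec D^2$. That inequality compares the two sides direction by direction and does not give $L^2(1+\|\bar B\|^2)<\lambda_{\min}(D)^2$. For instance, take $D=\mathrm{diag}(1,10)$, $\bar B^\top\bar B=\mathrm{diag}(0,50)$ and $L^2=1/2$. Then $L^2(\bar B^\top\bar B+I)=\mathrm{diag}(1/2,51/2)\prec\mathrm{diag}(1,100)=D^2$, yet $L^2(1+\|\bar B\|^2)=51/2>1=\lambda_{\min}(D)^2$. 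Hence the Euclidean Lipschitz bound $L\|[\bar B;I]\|<\lambda_{\min}(D)$, your strong-convexity claim, and the proposed cross-check on uniqueness are all unsupported as written.

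What the block does give is $L^2(\bar B^\top\bar B+I)\preceq\theta^2D^2$ for some $\theta<1$. Hence $\|\varDelta\psi\|\le\theta\|D\varDelta U\|$, where $\varDelta U=U_1-U_2$ and $\varDelta\psi$ is the corresponding increment of $\psi_t(\bar Ax+\bar BU,U)$. Write $\phi$ for the objective in \eqref{ocp_expanded}. This yields only the $D$-weighted monotonicity $\langle D\varDelta U,\nabla\phi(U_1)-\nabla\phi(U_2)\rangle\ge(1-\theta)\|D\varDelta U\|^2$. Up to constants, this is the strong monotonicity of $\mathcal{R}_{x,t}$ in the metric $W=-D^\top\bar M_{22}D=\gamma D^2$ that the proof of Proposition~\ref{prop:well-posedness_contraction} already establishes.

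That is enough for existence. Let $U^\ast$ be the unique stationary point and set $U(s)=U^\ast+e^{-sD}(U_0-U^\ast)$. Then $\frac{d}{ds}\phi(U(s))=-\langle D(U(s)-U^\ast),\nabla\phi(U(s))\rangle\le-(1-\theta)\|D(U(s)-U^\ast)\|^2$ and $U(s)\to U^\ast$, so $\phi(U_0)>\phi(U^\ast)$ whenever $U_0\neq U^\ast$. Thus $U^\ast$ is the unique minimizer.

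Alternatively, strong convexity does hold, but only because $U\mapsto\psi_t(\bar Ax+\bar BU,U)$ is a gradient. Its (a.e.) Jacobian $J$ is symmetric with $J^2\preceq\theta^2D^2$, so operator monotonicity of the square root gives $J\succeq-\theta D$ and hence $D+J\succeq(1-\theta)D$. If the regularizer is only $C^{1,1}$, apply this to mollifications. A norm bound on $\varDelta\psi$ without this symmetry cannot give Euclidean monotonicity when $D$ is ill-conditioned.
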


See Appendix \ref{appendix:proof_lipschitz:feasibility_wellposedness} for the proof of Theorem~\ref{thm:lipschitz:feasibility_wellposedness}.

\begin{example}[Soft penalty for state constraints]
\label{example:design:state_soft_penalty}
Consider Example~\ref{example:state_soft_penalty}.
The regularizer in \eqref{Vreg:state_soft_penalty} is Lipschitz with respect to $(X,U)$ with Lipschitz constant at most $L$.
Theorem~\ref{thm:contractivity:lipschitz} enables an SDP-based search for the largest regularization scale $L$ for which the closed-loop system of the regularized MPC remains strongly contracting with a prescribed factor $\eta$.
Furthermore, Assumption~\ref{assumption:lipschitz:well-posedness} holds if the regularizer is scaled so that the Lipschitz constant of its gradient equals the obtained value $L$.
\end{example}

\section{Numerical Case Studies}
\label{sec:numerical_examples}

This section illustrates the contractivity conditions developed above using four MPC designs.
The first example treats a convex smooth input soft penalty.
The next two examples use CCP regularizers for hard input constraints and sparse control, respectively.
The final example treats a state soft penalty with a Lipschitz gradient.
In each case, we solve the corresponding LMI as a semidefinite program, report a feasible contraction metric, and compare the closed-loop behavior of the nominal and regularized MPC laws.
The simulation code is available at \url{https://github.com/ToyotaCRDL/implicit-lure-mpc.git}.

\begin{figure}[t]
\centering
\begin{tikzpicture}[line cap=round,line join=round,>=Stealth]
  \def\gapY{1.0}    
  \def\spanX{1.6}   
  \def\massW{1.8}   
  \def\massH{2.2}   

  \def\damperBody{0.60}
  \def\damperPlate{0.35}
  \def\damperHalfH{0.25}

  \def\springLen{1.2}

  \def\labelUp{6pt}

  \tikzset{
    mass/.style   = {draw, thick, fill=gray!10, minimum width=\massW cm, minimum height=\massH cm},
    kspring/.style= {thick, decorate, decoration={zigzag, segment length=6pt, amplitude=2.5pt}},
    thinlabel/.style={font=\small, inner sep=1pt},
  }

  \pgfmathsetmacro\yU{\gapY/2}
  \pgfmathsetmacro\yL{-\gapY/2}

  \node[mass, anchor=west] (m1) at (\spanX,0) {$m_1$};
  \node[mass, anchor=west] (m2) at ({2*\spanX+\massW},0) {$m_2$};

  \coordinate (m1Wtop) at ($(m1.west)+(0,\yU)$);
  \coordinate (m1Wbot) at ($(m1.west)+(0,\yL)$);
  \coordinate (m1Etop) at ($(m1.east)+(0,\yU)$);
  \coordinate (m1Ebot) at ($(m1.east)+(0,\yL)$);
  \coordinate (m2Wtop) at ($(m2.west)+(0,\yU)$);
  \coordinate (m2Wbot) at ($(m2.west)+(0,\yL)$);

  \draw[thick] (-0.6, 1.8) -- (-0.6,-1.9);
  \foreach \y in {-1.8,-1.5,...,1.7} {
    \draw[thick] (-0.6,\y) -- (-0.9,\y+0.15);
  }
  \coordinate (wallTop) at (0,\yU);
  \coordinate (wallBot) at (0,\yL);
  \draw[thick] (-0.6,\yU) -- (wallTop);
  \draw[thick] (-0.6,\yL) -- (wallBot);

  \newcommand{\CenteredSpring}[4]{%
    \coordinate (mid) at ($(#1)!0.5!(#2)$);
    \draw[thick] (#1) -- ($(mid)+(-\springLen/2,0)$);
    \draw[kspring] ($(mid)+(-\springLen/2,0)$) -- ($(mid)+(\springLen/2,0)$);
    \draw[thick] ($(mid)+(\springLen/2,0)$) -- (#2);
    \node[thinlabel, #4=\labelUp] at (mid) {#3};
  }

  \CenteredSpring{wallTop}{m1Wtop}{$k_1$}{above}
  \CenteredSpring{m1Etop}{m2Wtop}{$k_2$}{above}

  \coordinate (midc1) at ($(wallBot)!0.5!(m1Wbot)$);
  \draw[thick] ($(midc1)+(-\damperBody/2,-\damperHalfH)$) rectangle
               ($(midc1)+(\damperBody/2,\damperHalfH)$);
  \draw[thick] ($(midc1)+(-\damperBody/2,0)$) -- ++(-\damperPlate,0);
  \draw[thick] ($(midc1)+(\damperBody/2,0)$) -- ++(\damperPlate,0);
  \draw[thick] (wallBot) -- ($(midc1)+(-\damperBody/2-\damperPlate,0)$);
  \draw[thick] ($(midc1)+(\damperBody/2+\damperPlate,0)$) -- (m1Wbot);
  \node[thinlabel, above=\labelUp, yshift=1mm] at (midc1) {$c_1$};

  \coordinate (midc2) at ($(m1Ebot)!0.5!(m2Wbot)$);
  \draw[thick] ($(midc2)+(-\damperBody/2,-\damperHalfH)$) rectangle
               ($(midc2)+(\damperBody/2,\damperHalfH)$);
  \draw[thick] ($(midc2)+(-\damperBody/2,0)$) -- ++(-\damperPlate,0);
  \draw[thick] ($(midc2)+(\damperBody/2,0)$) -- ++(\damperPlate,0);
  \draw[thick] (m1Ebot) -- ($(midc2)+(-\damperBody/2-\damperPlate,0)$);
  \draw[thick] ($(midc2)+(\damperBody/2+\damperPlate,0)$) -- (m2Wbot);
  \node[thinlabel, above=\labelUp, yshift=1mm] at (midc2) {$c_2$};

  \draw[->, thick] (m1.north) -- ++(1.5,0) node[midway, above] {$u_1$};
  \draw[->, thick] (m2.north) -- ++(1.5,0) node[midway, above] {$u_2$};
\end{tikzpicture}
\caption{Series mass-spring-damper system used in the mass-spring-damper case studies.
The state is $x=[q_1;\,v_1;\,q_2;\,v_2]$, and the control inputs $u_1$ and $u_2$ are external forces applied to the two masses.}
\label{fig:msd}
\end{figure}

\subsection{MPC of Series Mass-Spring-Damper System with Soft Penalty on Inputs}
\label{sec:numerical_example_soft_input_penalty}

We first illustrate Theorem~\ref{thm:contractivity_lmi:convex_smooth} on an MPC design with a soft penalty on input constraints.
Consider the series mass-spring-damper system shown in Figure~\ref{fig:msd}.
Let $x = [q_1; v_1; q_2; v_2]$ be the state vector, where $q_i$ and $v_i$ are the position and velocity of mass $i$, respectively, and let $u = [u_1; u_2]$ be the vector of applied forces.
The continuous-time dynamics are
\begin{align*}
    \dot{x}(t) = A_\mathrm{c} x(t) + B_\mathrm{c} u(t)
    ,
    \\
    A_\mathrm{c}
    =
    \begin{bmatrix}
        0 & 1 & 0 & 0
        \\
        a_{21} & a_{22} & a_{23} & a_{24}
        \\
        0 & 0 & 0 & 1
        \\
        \frac{k_2}{m_2} & \frac{c_2}{m_2} & -\frac{k_2}{m_2} & -\frac{c_2}{m_2}
    \end{bmatrix}
    ,
    \ \
    B_\mathrm{c}
    =
    \begin{bmatrix}
        0 & 0
        \\
        \frac{1}{m_1} & 0
        \\
        0 & 0
        \\
        0 & \frac{1}{m_2}
    \end{bmatrix}
\end{align*}
where $a_{21} = -\frac{k_1 + k_2}{m_1}$, $a_{22} = -\frac{c_1 + c_2}{m_1}$, $a_{23} = \frac{k_2}{m_1}$, and $a_{24} = \frac{c_2}{m_1}$.
The parameters are $m_1=9, m_2=10, k_1=5, k_2=6$, and $c_1=c_2=0$.
Applying a zero-order hold with sampling period $\varDelta T = 1$ gives the discrete-time model \eqref{plant_lti}.

For the nominal MPC, we set $Q = 5 I_4$ and $R=I_2$, and choose the terminal weight $Q_\mathrm{f}$ as the solution of DARE \eqref{dare}.
The prediction horizon is $H=5$.
In this setting, $\rho(A_\mathrm{nom}) = 0.867$.

To encourage the inputs to satisfy $\underline{u} \leq u_i \leq \bar{u}$ $(i=1,2)$ with $\underline{u} = -1.0$ and $\bar{u} = 0.5$, we use the soft penalty in Example~\ref{example:input_soft_penalty} with $L=10$.
We choose the desired contractivity factor $\eta = 0.99$, which is larger than $\rho(A_\mathrm{nom})$.
The LMI \eqref{contractivity_lmi:convex_smooth} is feasible; therefore, by Theorem~\ref{thm:contractivity_lmi:convex_smooth}, the closed-loop system is strongly contracting with factor $\eta$ with respect to the norm $\|\cdot\|_P$, where $P$ is the LMI solution.
Because the regularized MPC satisfies $u=0$ at $x=0$, Lemma~\ref{lemma:contraction_benefit} further guarantees exponential stability of the origin.

To illustrate the certified contraction, we simulate two trajectories $\{x^{(1)}(t)\}$ and $\{x^{(2)}(t)\}$ starting from
$
    x^{(1)}(0) = [-1;\, -0.8;\, 1;\, 0.5], \ \ 
    x^{(2)}(0) = [1;\, -1;\, 1;\, 1]
    .
$
Figure~\ref{fig:smooth} shows the resulting trajectories under the nominal and regularized MPC laws.
The top panel shows $\|x^{(1)}(t) - x^{(2)}(t)\|_P$ on a logarithmic scale.
For comparison, the exponential decay with rate $\eta$ is shown in gray.
Both the nominal and regularized MPC laws exhibit contraction with factor $\eta$, whereas the open-loop system does not.

The middle and bottom panels show the trajectories of $u_1$ and $u_2$, respectively.
The regularized MPC keeps the inputs closer to the interval $[-1,\,0.5]$, as intended by the input penalty.
To quantify this effect, we measure the cumulative constraint violation
$
\sum_{t=0}^{39} \sum_{i=1}^2 \max\{0, u_i(t) - \bar{u}, \underline{u} - u_i(t)\}
$.
For 50 initial states sampled uniformly from $[-3.0, 3.0]^4$, the regularized MPC gives a smaller cumulative violation than the nominal MPC for every initial condition.
The average cumulative violation is $18.3$ for the regularized MPC and $36.7$ for the nominal MPC.

\begin{figure}[t]
\centering
\includegraphics[width=0.99\linewidth]{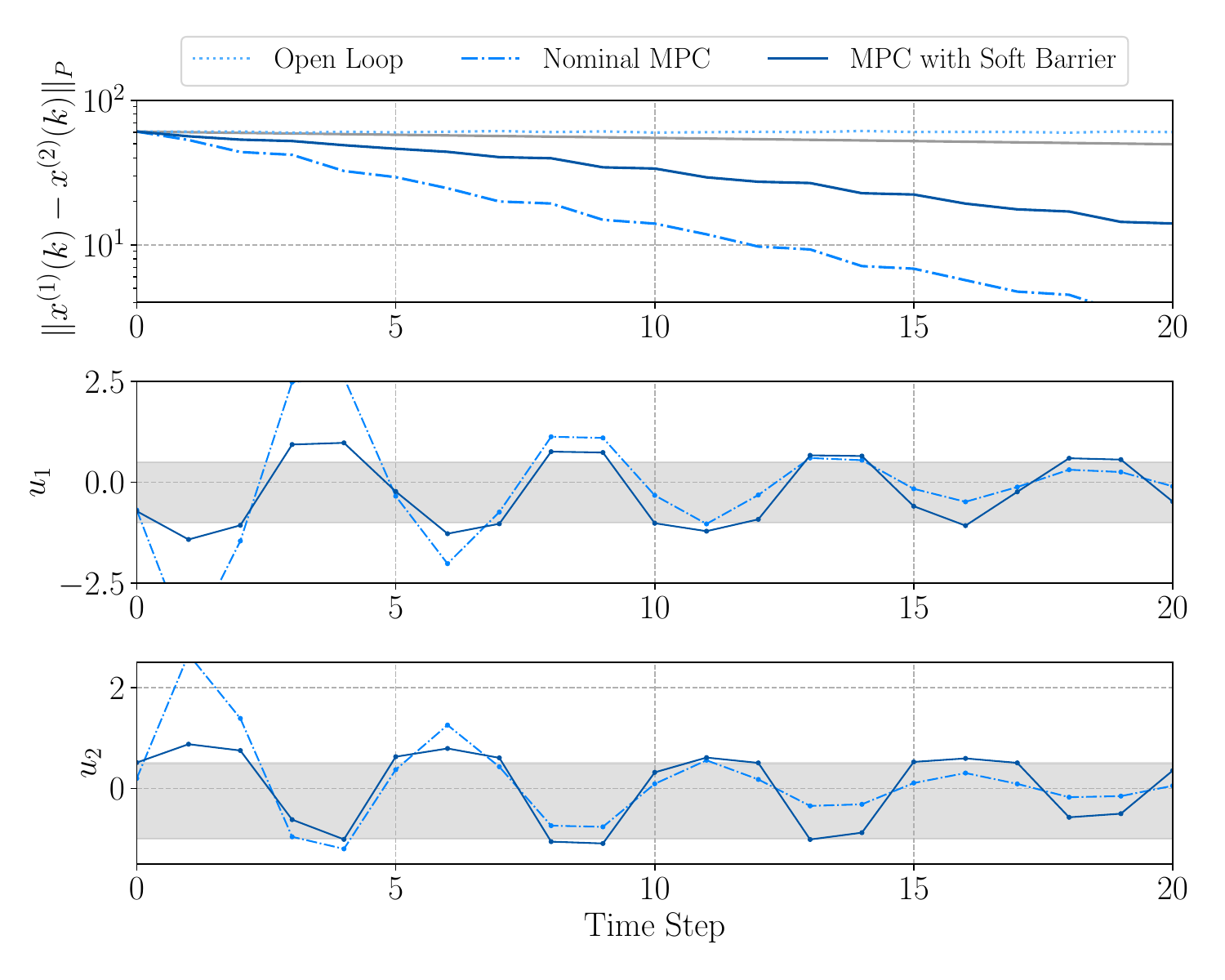}
\caption{Soft input-penalty example.
Top: distance between two closed-loop trajectories, measured in the certified norm $\| \cdot \|_P$, with the gray line indicating the prescribed decay rate $\eta$.
Middle and bottom: input trajectories $u_1$ and $u_2$.
The input-penalized MPC preserves the certified contraction behavior while keeping the inputs closer to the desired interval $[-1,\,0.5]$ than the nominal MPC.}
\label{fig:smooth}
\end{figure}

\subsection{Tracking MPC for Series Mass-Spring-Damper System with Hard Constraints on Inputs}
\label{sec:numerical_example_hard_input_penalty}

We next illustrate Theorem~\ref{thm:contractivity_lmi:ccp} on a tracking MPC design with hard input constraints.

We consider the series mass-spring-damper system shown in Figure~\ref{fig:msd} with parameters $m_1 = 9, m_2 = 8, k_1 = 7, k_2 = 6, c_1 = 5, c_2 = 4$ and sampling period $\varDelta T = 1$.
We choose the reference positions
$x_1^\mathrm{ref}(t) = 2\sin{(\frac{\pi}{4}t)}$ and
$x_3^\mathrm{ref}(t) = 4\sin{(\frac{\pi}{4}t)}$, and define
$q^\mathrm{ref}(t) \coloneqq [x_1^\mathrm{ref}(t); x_3^\mathrm{ref}(t)]$.
We assume that the reference state $x^\mathrm{ref}(t)$ and reference input $u^\mathrm{ref}(t)$ satisfy the state-space model $x^+ = Ax + Bu$ and are periodic with period 16, i.e., $x^\mathrm{ref}(t+16) = x^\mathrm{ref}(t), \ u^\mathrm{ref}(t+16) = u^\mathrm{ref}(t)$ for all $t \in \timeset$.
Then $x^\mathrm{ref}(1), \ldots, x^\mathrm{ref}(16)$ and $u^\mathrm{ref}(1), \ldots, u^\mathrm{ref}(16)$ satisfy
\begin{align*}
A x^\mathrm{ref}(t) - x^\mathrm{ref}(t+1) + B u^\mathrm{ref}(t) &= 0
\ \
t = 1, \dots, 15
,
\\
A x^\mathrm{ref}(16) - x^\mathrm{ref}(1) + B u^\mathrm{ref}(16) &= 0
,
\\
\Pi_\mathrm{pos} x^\mathrm{ref}(t) - q^\mathrm{ref}(t) &= 0
\ \
t = 1, \dots, 16
,
\\
\Pi_\mathrm{pos}
&\coloneqq
\begin{bmatrix}
    1 & 0 & 0 & 0 \\ 0 & 0 & 1 & 0
\end{bmatrix}
.
\end{align*}

The tracking error is defined as $x^\mathrm{e} \coloneqq x - x^\mathrm{ref}(t)$ and the feedback input as $u^\mathrm{e} \coloneqq u - u^\mathrm{ref}(t)$.
By linearity of \eqref{plant_lti}, the error variables satisfy $x^\mathrm{e}(t+1) = Ax^\mathrm{e}(t) + Bu^\mathrm{e}(t)$.
The MPC control law for the error system is given by $u^\mathrm{e}(t) = \Pi_1 U^\ast(x^\mathrm{e}(t), t)$ as in \eqref{mpc_control_law}, yielding $u(t) = u^\mathrm{ref}(t) + \Pi_1 U^\ast(x^\mathrm{e}(t), t)$.
We set $Q = 10 I_4, R = I_2$ and select the terminal weight $Q_\mathrm{f}$ as the solution to DARE in \eqref{dare}.
The prediction horizon is $H = 15$.
In this setting, $\rho(A_\mathrm{nom}) = 0.693$.

We impose the input box constraints $-6 \leq u_i \leq 6$ $(i = 1, 2)$, which become the time-varying constraints
$-6 - u^\mathrm{ref}_i(t) \leq u_i^\mathrm{e} \leq 6 - u^\mathrm{ref}_i(t)$ on the feedback input.
These constraints are represented by selecting the regularizer as the indicator function in Example~\ref{example:input_hard_constraint}.
We choose the desired contraction factor $\eta = 0.95$.
The LMI \eqref{contractivity_lmi:ccp} is feasible; therefore, by Theorem~\ref{thm:contractivity_lmi:ccp}, the regularized MPC closed loop for the error system is strongly contracting with factor $\eta$ with respect to the norm $\|\cdot\|_P$, where $P$ is the LMI solution.
Consequently, the tracking MPC is contracting.
Since the control law is periodic, Lemma~\ref{lemma:contraction_benefit} implies that the closed-loop state and input trajectories converge to a periodic orbit.

To illustrate the certified contraction, we simulate two trajectories $\{x^{(1)}(k)\}$ and $\{x^{(2)}(k)\}$ starting from
$
    x^{(1)}(0) = [4;\, 0;\, 4;\, 0], \ \ 
    x^{(2)}(0) = [-4;\, 0;\, -4;\, 0]
    .
$
Figure~\ref{fig:tracking_convex} shows the results for the nominal and hard-constrained MPC laws.
The top panel shows $\|x^{(1)}(t) - x^{(2)}(t)\|_P$ on a logarithmic scale.
For comparison, the exponential decay with rate $\eta$ is shown in gray.
The open-loop system, nominal MPC, and hard-constrained MPC all exhibit contraction with factor $\eta$ in this example.
The middle and bottom panels show the trajectories of $u_1$ and $u_2$, respectively.
The nominal MPC achieves accurate tracking but violates the input constraints, whereas the hard-constrained MPC respects the constraints and tracks the reference as closely as the input bounds allow.
The constrained closed-loop input also converges to a periodic orbit.

\begin{figure}[t]
\centering
\includegraphics[width=0.99\linewidth]{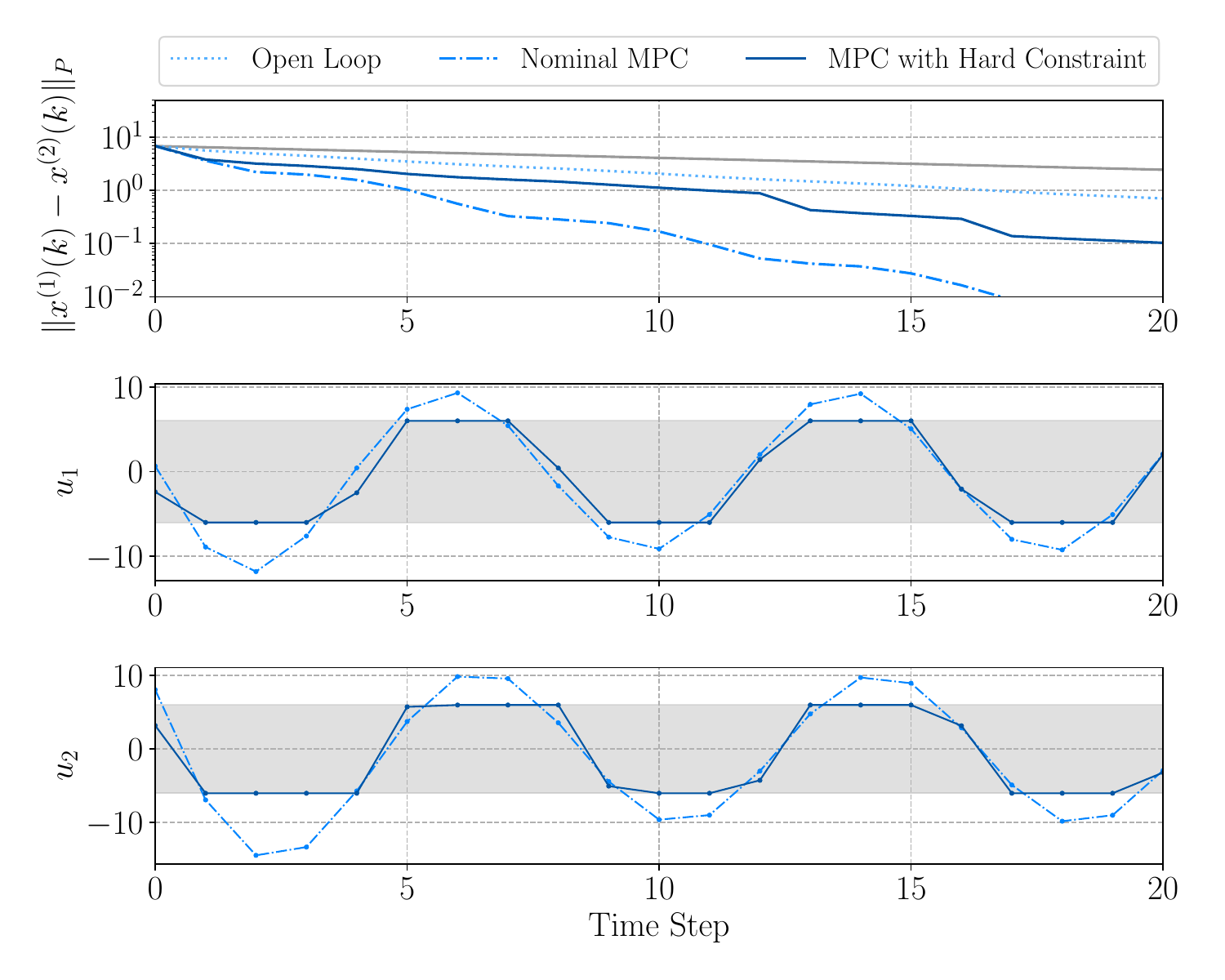}
\caption{
Tracking example with hard input constraints.
Top: distance between two error trajectories, measured in the certified norm $\| \cdot \|_P$, with the gray line indicating the prescribed decay rate $\eta$.
Middle and bottom: input trajectories $u_1$ and $u_2$.
The nominal MPC tracks accurately but can violate the input bounds, whereas the hard-constrained MPC enforces $-6 \leq u_i \leq 6$ and converges to the constrained periodic response.
\label{fig:tracking_convex}
}
\end{figure}

\subsection{Sparse Control of Consensus Networks}
\label{sec:numerical_example_sparse}

We now consider a consensus network represented by a graph $G = (V,E)$, where each node $i \in V$ has a state $z_i(t) \in \R$ and edge inputs are used to promote consensus.
The dynamics of the consensus network is described by
\begin{align}
    \label{consensus}
    z(t+1) = (I - \varepsilon \alpha L_G) z(t) + \varepsilon B_G u(t),
    \\
    \notag
    L_G =
    \begin{bmatrix}
         2 & -1 &  0 &  0 & -1 &  0 \\
        -1 &  3 & -1 &  0 & -1 &  0 \\
         0 & -1 &  3 & -1 &  0 & -1 \\
         0 &  0 & -1 &  2 &  0 & -1 \\
        -1 & -1 &  0 &  0 &  3 & -1 \\
         0 &  0 & -1 & -1 & -1 &  3
    \end{bmatrix}
    ,
    \\
    \notag
    B_G =
    \begin{bmatrix}
         0 &  0 &  0 &  0 &  0 &  0  &  1 &  1 \\
         1 &  0 &  1 &  0 &  0 &  0  & -1 &  0 \\
        -1 &  1 &  0 &  1 &  0 &  0  &  0 &  0 \\
         0 & -1 &  0 &  0 &  0 &  1  &  0 &  0 \\
         0 &  0 & -1 &  0 &  1 &  0  &  0 & -1 \\
         0 &  0 &  0 & -1 & -1 & -1  &  0 &  0
    \end{bmatrix}
\end{align}
with $\varepsilon = 0.2$ and $\alpha = 0.1$.
Here, $z(t) \in \R^6$ is the state vector, $u(t) \in \R^8$ is the edge-input vector, and $L_G$ and $B_G$ are the graph Laplacian and incidence matrix, respectively.
The parameter $\varepsilon$ is the sampling period, and $\alpha$ is the consensus gain.
The communication graph is shown in Figure~\ref{fig:graph}.
One instance of such dynamics is a vehicle platoon, where each node represents a vehicle, $z_i(t)$ denotes its position, $(I - \varepsilon \alpha L_G)z(t)$ describes local feedback from neighboring vehicles, and $\varepsilon B_G u(t)$ represents external velocity commands from a central coordinator.

\begin{figure}[t]
\centering
\begin{tikzpicture}
[
>=stealth,
node/.style={circle, draw, thick, minimum size=8mm, inner sep=0pt, font=\small},
edge/.style={thick}
]
\node[node] (n1) at (-2,-0.8) {1};
\node[node] (n2) at (0,0)    {2};
\node[node] (n3) at (2,0)    {3};
\node[node] (n4) at (4,-0.8) {4};
\node[node] (n5) at (0,-1.6) {5};
\node[node] (n6) at (2,-1.6) {6};
\draw[edge] (n1) -- node[above] {$u_7$} (n2);
\draw[edge] (n1) -- node[below] {$u_8$} (n5);
\draw[edge] (n2) -- node[above] {$u_1$} (n3);
\draw[edge] (n3) -- node[above] {$u_2$} (n4);
\draw[edge] (n2) -- node[left]  {$u_3$} (n5);
\draw[edge] (n3) -- node[right] {$u_4$} (n6);
\draw[edge] (n5) -- node[below] {$u_5$} (n6);
\draw[edge] (n4) -- node[below] {$u_6$} (n6);
\end{tikzpicture}
\caption{Communication graph of the consensus network used in the sparse-control example.
Nodes represent scalar agents, and edge labels indicate the eight control inputs acting through the incidence matrix $B_G$.}
\label{fig:graph}
\end{figure}

For any input $u(t)$, the state-space model \eqref{consensus} preserves the average $\bar{z} \coloneqq \frac16 \sum_{i=1}^6 z_i(t)$ because $\mathbf{1}^\top L_G = 0$ and $\mathbf{1}^\top B_G = 0$, where $\mathbf{1} \coloneqq [1; \dots; 1] \in \R^6$.
To analyze convergence to the average, define the deviation variables $x_i(t) \coloneqq z_i(t) - \bar z$ for $i=1,\dots,6$.
Since $\sum_{i=1}^6 x_i(t)=0$, we eliminate $x_6$ with $x_6 = -\sum_{i=1}^5 x_i$ and define the reduced state as $x(t) \coloneqq [x_1;\dots;x_5]$.
The dynamics of the reduced state is given by
\begin{align}
    \label{consensus_reduced}
    x(t+1) = A x(t) + B u(t)
\end{align}
where $A$ and $B$ are given by
\begin{align*}
    A &\coloneqq
    \begin{bmatrix}
        1 - 2 \varepsilon \alpha &  \varepsilon \alpha        & 0            & 0            &  \varepsilon \alpha \\
         \varepsilon \alpha      & 1 - 3 \varepsilon \alpha &  \varepsilon \alpha      & 0            &  \varepsilon \alpha \\
        - \varepsilon \alpha     & 0              & 1 - 4 \varepsilon \alpha & 0            & - \varepsilon \alpha \\
        - \varepsilon \alpha     & - \varepsilon \alpha      & 0             & 1 - 3 \varepsilon \alpha & - \varepsilon \alpha \\
        0             & 0              & - \varepsilon \alpha     & - \varepsilon \alpha     & 1 - 4 \varepsilon \alpha
    \end{bmatrix}
    ,
    \\
    B &\coloneqq \varepsilon
    \begin{bmatrix}
         0 &  0 &  0 & 0 & 0 & 0 &  1 &  1 \\
         1 &  0 &  1 & 0 & 0 & 0 & -1 &  0 \\
        -1 &  1 &  0 & 1 & 0 & 0 &  0 &  0 \\
         0 & -1 &  0 & 0 & 0 & 1 &  0 &  0 \\
         0 &  0 & -1 & 0 & 1 & 0 &  0 & -1
    \end{bmatrix}
    .
\end{align*}

To promote consensus using sparse control effort, we use the following stage cost:
\begin{align}
    \label{stage_consensus_z}
    \hat{\ell}(z,u) \coloneqq \sum_{i=1}^6 \frac12 \left\| z_i - \bar{z} \right\|^2 + \|u\|^2 + \lambda \|u\|_1
    .
\end{align}
In terms of the reduced state $x$, this stage cost becomes
\begin{align*}
    \ell(x,u) \coloneqq \frac12 x^\top Q x + \frac12 u^\top R u + \lambda \|u\|_1
    ,
    \\ 
    Q \coloneqq I_5 + \mathbf{1}_5\mathbf{1}_5^\top
    ,
    \ \ 
    R \coloneqq 2 I_8
    .
\end{align*}
Here, $\mathbf{1}_5 \in \R^5$ denotes the all-ones vector.
We set $H=10$ and choose the terminal weight $Q_\mathrm{f}$ as the solution of DARE in \eqref{dare}.
We choose the desired contractivity factor $\eta = 0.99$, which is larger than $\rho(A_\mathrm{nom}) = 0.866$.
Then the LMI \eqref{contractivity_lmi:ccp} is feasible; therefore, by Theorem~\ref{thm:contractivity_lmi:ccp}, the closed-loop system of the regularized MPC is strongly contracting with factor $\eta$ with respect to the norm $\|\cdot\|_P$, where $P$ is the LMI solution.
Since $u=0$ at $x=0$, Lemma~\ref{lemma:contraction_benefit} implies exponential stability of the consensus subspace.

To illustrate the certified contraction, we compute two trajectories $\{x^{(1)}(t)\}$ and $\{x^{(2)}(t)\}$ starting from
$
    x^{(1)}(0) = [-3;\, 3;\, 2;\, 1;\, -5], \ \ 
    x^{(2)}(0) = [ 1;\, -1;\, 1;\, 1;\, 3]
    .
$
Figure~\ref{fig:consensus_sparse} shows the results for the nominal and sparse MPC laws.
The top panel shows $\|x^{(1)}(t) - x^{(2)}(t)\|_P$ on a logarithmic scale.
For comparison, the exponential decay with rate $\eta$ is shown in gray.
The open-loop system, nominal MPC, and sparse MPC all exhibit contraction with factor $\eta$ in this example.
The middle and bottom panels show the trajectories of $u_2$ and $u_4$, respectively.
The sparse MPC turns off $u_2$ after time step 16 and $u_4$ after time step 5, demonstrating the sparsity-inducing effect of the $\ell_1$ regularizer.

Figure~\ref{fig:consensus_stat} plots the number of zero-valued inputs for 50 randomly generated initial conditions and several values of the regularization coefficient $\lambda$ in \eqref{stage_consensus_z}.
As $\lambda$ increases, the controller uses fewer nonzero inputs.

\begin{figure}[t]
  \centering
  \includegraphics[width=0.99\linewidth]{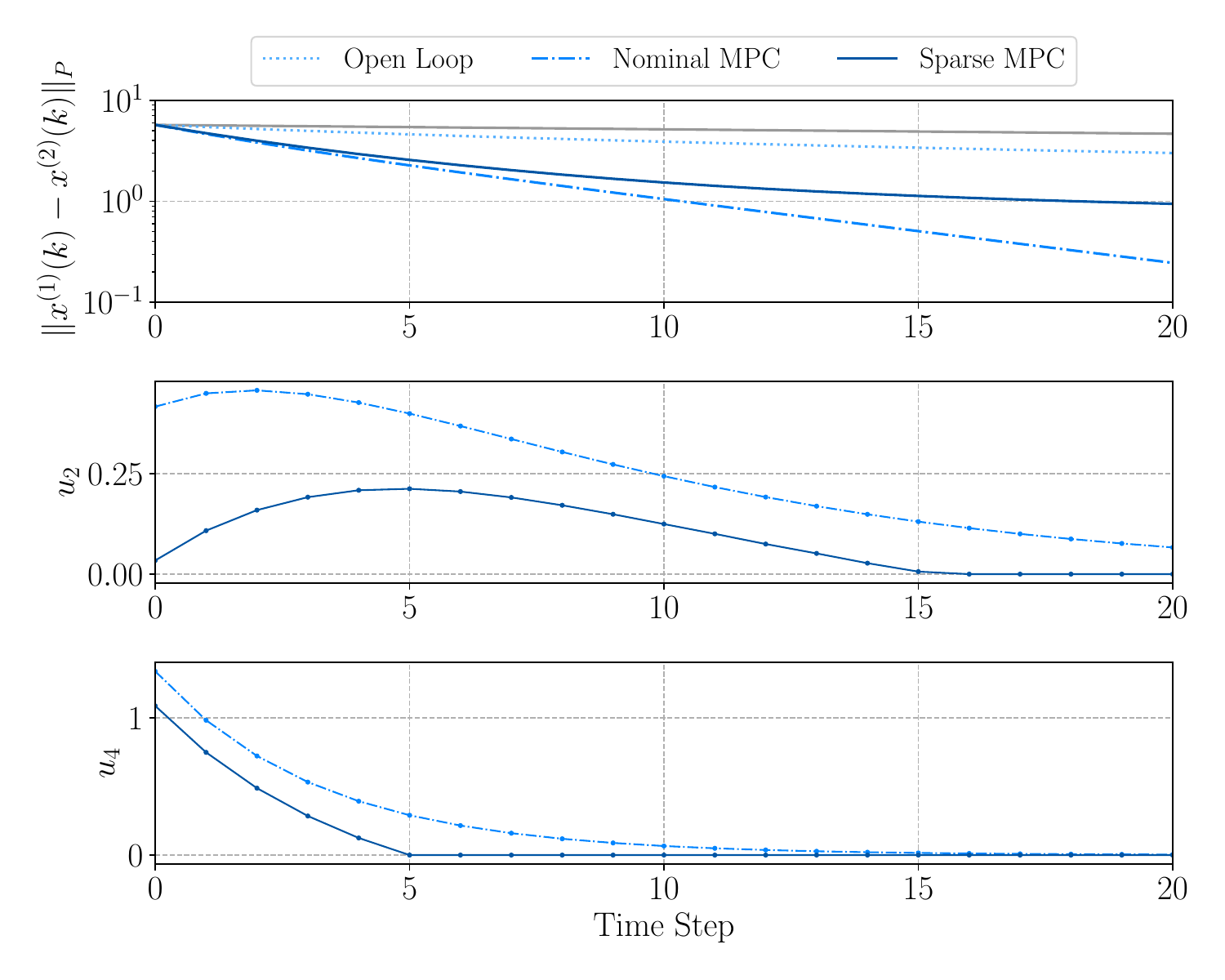}
  \caption{Sparse consensus-control example with $\lambda=1.5$.
  Top: distance between two reduced-state trajectories, measured in the certified norm $\| \cdot \|_P$, with the gray line indicating the prescribed decay rate $\eta$.
  Middle and bottom: representative edge inputs $u_2$ and $u_4$.
  The sparse MPC retains contraction while driving selected inputs exactly to zero, illustrating the sparsity-promoting effect of the $\ell_1$ regularizer.
  \label{fig:consensus_sparse}
  }
\end{figure}

\begin{figure}[t]
  \centering
  \includegraphics[width=0.99\linewidth]{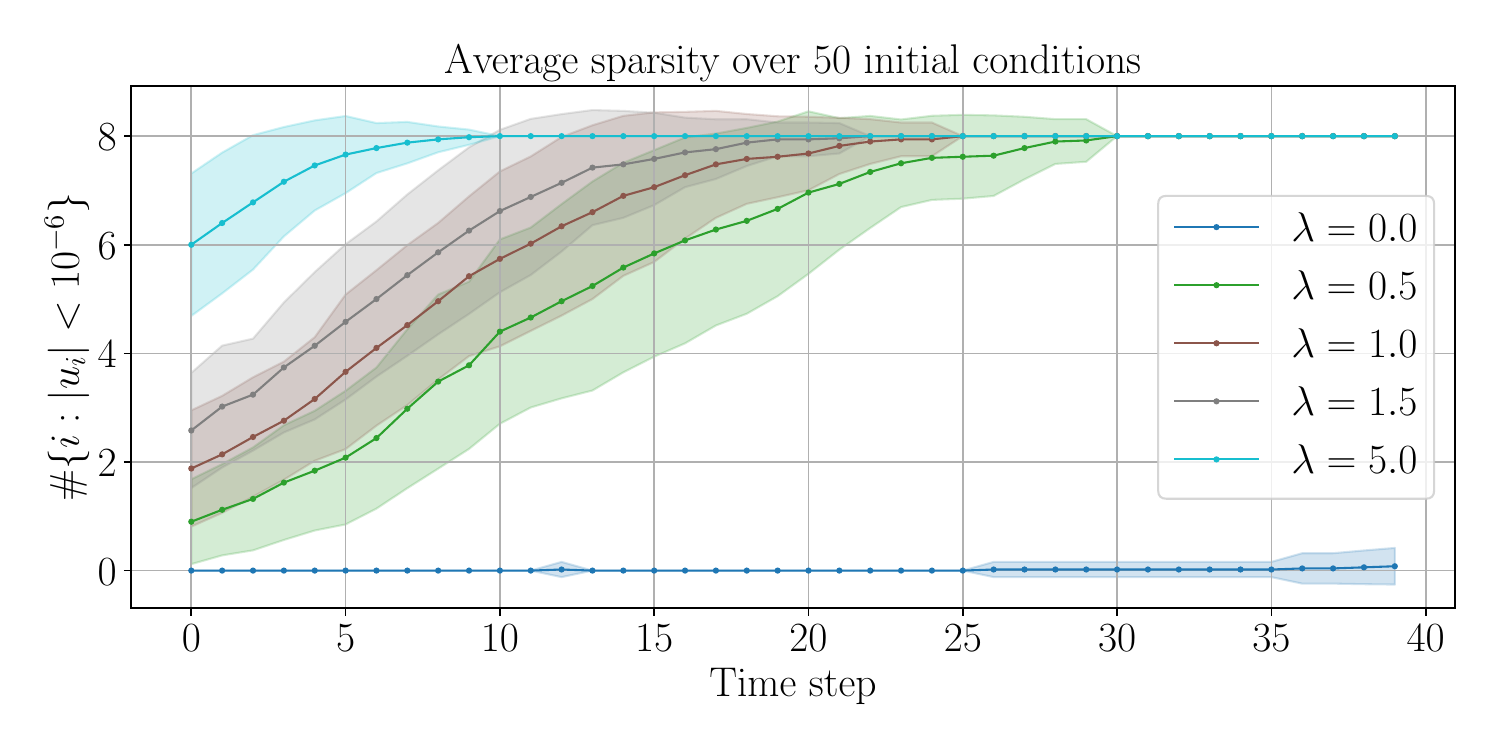}
  \caption{Sparsity statistics for the consensus example over 50 randomly generated initial conditions.
  The curves show the number of zero-valued edge inputs over time for different values of the regularization coefficient $\lambda$.
  Larger $\lambda$ values produce sparser control actions, whereas the nominal MPC case ($\lambda=0$) rarely sets inputs exactly to zero.
  \label{fig:consensus_stat}
  }
\end{figure}

\subsection{MPC of Series Mass-Spring-Damper System with Soft Penalty on States}
\label{sec:numerical_example_soft_state_penalty}

Finally, we illustrate Theorem~\ref{thm:contractivity:lipschitz} on an MPC design with a soft penalty on the states.
We consider the series mass-spring-damper system shown in Figure~\ref{fig:msd} with parameters $m_1 = 8, m_2 = 6, k_1 = 5, k_2 = 4, c_1 = -0.5, c_2 = 1.3$, and sampling period $\varDelta T = 1$.

For the nominal MPC, we set $Q=I_4$ and $R=I_2$, and choose $Q_\mathrm{f}$ as the solution of DARE in \eqref{dare}.
The prediction horizon is $H = 2$.
The nominal closed-loop matrix satisfies $\rho(A_\mathrm{nom}) = 0.852$.

To encourage the states to satisfy $\underline{x} \leq x_i \leq \bar{x}$ with $\underline{x} = -0.6$ and $\bar{x} = 0.4$, we use the regularizer \eqref{Vreg:state_soft_penalty} from Example~\ref{example:state_soft_penalty}.
We choose the desired contraction factor $\eta = 0.99$.
Since $\eta>\rho(A_\mathrm{nom})$, Theorem~\ref{thm:lipschitz:feasibility_wellposedness} guarantees feasibility of the SDP that minimizes $\gamma=L^{-2}$ subject to \eqref{contractivity_lmi:lipschitz}.
One feasible solution gives $L = 0.60$, and hence Theorem~\ref{thm:contractivity:lipschitz} guarantees that the regularized MPC is strongly contracting with factor $\eta$ with respect to the norm $\|\cdot\|_P$, where $P$ is the LMI solution.
Since the regularized MPC satisfies $u=0$ at $x=0$, Lemma~\ref{lemma:contraction_benefit} implies that the closed-loop system remains exponentially stable.

To illustrate the certified contraction, we compute two trajectories $\{x^{(1)}(t)\}_{t=0, \dots, 20}$ and $\{x^{(2)}(t)\}_{t=0, \dots, 20}$ starting from
$
    x^{(1)}(0) = [-0.5;\, 3.5;\, -2.5;\, 1.5], \ \ 
    x^{(2)}(0) = [-2;\, -2;\, -2;\, -2]
    .
$
Figure~\ref{fig:full} shows the trajectories under the nominal MPC, the state-penalized MPC, and the open-loop system.
The top panel shows $\|x^{(1)}(t) - x^{(2)}(t)\|_P$ on a logarithmic scale.
For comparison, the exponential decay with rate $\eta$ is shown in gray.
Both MPC laws exhibit contraction with factor $\eta$, whereas the open-loop system does not.
The middle and bottom panels show the trajectories of the first state $x_1$ and the third state $x_3$, respectively, which correspond to the positions of the two masses.
Compared with the open-loop response, both MPC laws reduce the amplitude of the state oscillations.
The state-penalized MPC further encourages the states to remain within the interval $[-0.6,\,0.4]$.
In particular, $x_1$ enters this interval at time $4$ and remains inside, while $x_3$ enters it at time $6$ and remains inside.

\begin{figure}[t]
    \centering
    \includegraphics[width=0.99\linewidth]{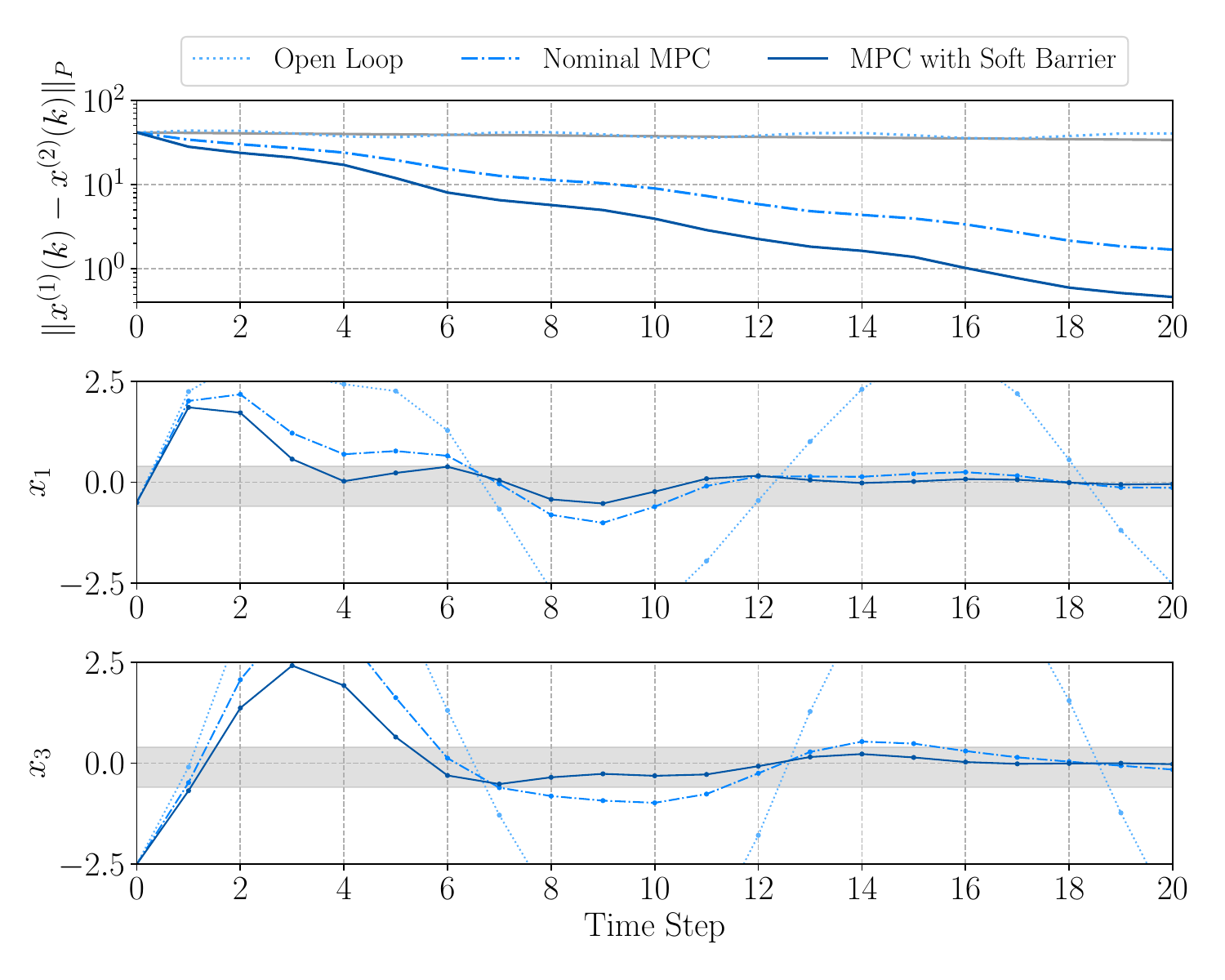}
    \caption{Soft state-penalty example.
    Top: distance between two trajectories, measured in the certified norm $\| \cdot \|_P$, with the gray line indicating the prescribed decay rate $\eta$.
    Middle and bottom: position trajectories $x_1$ and $x_3$ of the two masses.
    The state-penalized MPC preserves contraction and encourages the positions to remain in the desired interval $[-0.6,\,0.4]$, while the open-loop system does not exhibit the certified decay.
    \label{fig:full}
    }
\end{figure}

\section{Conclusion}
\label{sec:conclusion}

This paper investigated the contractivity of regularized MPC for linear systems through implicit Lur'e analysis.
The results provide a new perspective on stability analysis and design for regularized MPC.

Several directions for future research remain.
One promising application of the proposed contractivity analysis is economic MPC \cite{TF-LG-MAM:18}, because contractivity can ensure convergence to an unknown optimal operation.
Other potential applications include nonlinear MPC \cite{FB-ALB-MF-RS:24}, distributed MPC \cite{CC-CNJ-MM-MNZ:16}, and receding horizon games \cite{SH-GB-FD-DLMP:26}.
Furthermore, the implicit Lur'e approach may apply to a broader class of implicit problems, such as anti-windup design of PID controllers \cite{IQ-ST-GV-LZ:22}, moving horizon estimation-type observers \cite{JDS-SM-JK-MNZ-MAM:23}, and equilibrium networks \cite{EW-JZK:20, MR-RW-IRM:24} in machine learning.
It may also be extended to dynamical operator settings \cite{RW-IRM:19}, where the implicit equations are solved over time.
Finally, an extension of the framework to local stability analysis would enable the treatment of constrained MPC settings.

\section*{References}

\bibliographystyle{plainurl+isbn}

\bibliography{alias,Main,FB}

\appendices

\section{Supplemental Lemmata\\for Proof of Proposition~\ref{prop:contractivity_feasibility}}

\label{app:proof_feasibility}

\begin{lemma}[Core for LMI feasibility]
\label{lemma:feasibility_proof}
Let $N_1\in \R^{n\times n}, N_2 \in \R^{m\times m}$ be negative definite, let $B_{11} \in \R^{n\times n}, A_{22}, B_{22} \in \R^{m \times m}$ be symmetric, and let $A_{12}, B_{12} \in \R^{n\times m}$ be arbitrary matrices.
Then there exists a positive scalar $\alpha > 0$ such that
\begin{align}
    \label{scaling_multiplier_inequality}
    \begin{bmatrix}
        N_1 & A_{12} \\
        A_{12}^\top & A_{22}
    \end{bmatrix}
    +
    \alpha
    \begin{bmatrix}
        B_{11} & B_{12} \\
        B_{12}^\top & B_{22}
    \end{bmatrix}
    +
    \frac{1}{\alpha}
    \begin{bmatrix}
        0 & 0 \\
        0 & N_2
    \end{bmatrix}
    \prec 0
    .
\end{align}
\end{lemma}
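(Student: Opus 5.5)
The inequality is a Schur-complement argument with $\alpha$ taken small. Write the left-hand side of \eqref{scaling_multiplier_inequality} as
\begin{align*}
    S(\alpha) \coloneqq
    \begin{bmatrix}
        N_1 + \alpha B_{11} & A_{12} + \alpha B_{12} \\
        A_{12}^\top + \alpha B_{12}^\top & A_{22} + \alpha B_{22} + \frac{1}{\alpha} N_2
    \end{bmatrix}.
\end{align*}
The idea is that as $\alpha \to 0^+$, the $(1,1)$ block tends to $N_1 \prec 0$ while the $(2,2)$ block is dominated by $\frac1\alpha N_2$, which is very negative. The bounded off-diagonal block should therefore be absorbed.

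First, I fix $\alpha_0>0$ with $\alpha_0 \|B_{11}\| \le \frac12 \lambda_{\min}(-N_1)$. Then for all $\alpha\in(0,\alpha_0]$ we have $N_1 + \alpha B_{11} \preceq \frac12 N_1' $, where $N_1' \coloneqq -\lambda_{\min}(-N_1) I \prec 0$. In particular the $(1,1)$ block is negative definite with inverse bounded in norm by $2/\lambda_{\min}(-N_1)$, uniformly in $\alpha$.

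Next I apply the Schur complement: $S(\alpha)\prec 0$ iff the $(1,1)$ block is $\prec 0$ and
\begin{align*}
    A_{22} + \alpha B_{22} + \tfrac{1}{\alpha} N_2 - (A_{12}+\alpha B_{12})^\top (N_1+\alpha B_{11})^{-1} (A_{12}+\alpha B_{12}) \prec 0 .
\end{align*}
Since $-(N_1+\alpha B_{11})^{-1}\succ 0$, the subtracted term is positive semidefinite. Its norm is bounded by $c \coloneqq 2(\|A_{12}\|+\alpha_0\|B_{12}\|)^2/\lambda_{\min}(-N_1)$, uniformly over $\alpha\in(0,\alpha_0]$. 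Hence the Schur complement is bounded above by $(\|A_{22}\| + \alpha_0\|B_{22}\| + c) I - \frac{1}{\alpha}\lambda_{\min}(-N_2) I$. Choosing $\alpha \le \alpha_0$ small enough that $\frac1\alpha \lambda_{\min}(-N_2)$ exceeds this constant makes the Schur complement negative definite, and the claim follows.

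The argument is routine. The only point needing care is keeping all bounds uniform in $\alpha$, which is why I restrict to $(0,\alpha_0]$ before invoking the Schur complement. An equivalent route, if preferred, is a quadratic-form bound: for $[v;w]$, use Young's inequality on the cross term $2v^\top(A_{12}+\alpha B_{12})w \le \epsilon\|v\|^2 + \epsilon^{-1}\|\cdot\|^2\|w\|^2$ with $\epsilon = \frac14\lambda_{\min}(-N_1)$, and then pick $\alpha$ small.
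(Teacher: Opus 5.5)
Your proof is correct and is essentially the paper's argument: a Schur complement combined with taking $\alpha$ small. The only difference is the pivot block. The paper pivots on the $(2,2)$ block, written as $\frac{1}{\alpha}\bar N_2$ with $\bar N_2 \coloneqq N_2 + \alpha A_{22} + \alpha^2 B_{22} \prec 0$, and notes that $N_1 + \alpha B_{11} - \alpha (A_{12}+\alpha B_{12})\bar N_2^{-1}(A_{12}+\alpha B_{12})^\top$ tends to $N_1$. You pivot on the $(1,1)$ block and let $\frac{1}{\alpha}N_2$ dominate, with explicit bounds uniform in $\alpha$ that the paper leaves implicit.
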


\smallskip

\proof
Let $\bar{N}_2 \coloneqq N_2 + \alpha A_{22} + \alpha^2 B_{22}$.
Since $N_2 \prec 0$, we have $\bar{N}_2 \prec 0$ for a sufficiently small positive constant $\alpha$.
The Schur complement lemma implies that \eqref{scaling_multiplier_inequality} is equivalent to
\begin{align*}
    N_1 + \alpha B_{11} - \alpha (A_{12} + \alpha B_{12}) \bar{N}_2^{-1} (A_{12} + \alpha B_{12})^\top \prec 0
    ,
\end{align*}
which holds for a sufficiently small positive constant $\alpha$ since $N_1 \prec 0$.
\endproof

\begin{lemma}[Singleton property]
\label{lemma:singleton}
Let $\Psi: \R^r \to \mathcal{P}(\R^m)$ admit
\begin{align*}
    M =
    \begin{bmatrix}
        M_{11} & M_{12}
        \\
        M_{12}^\top & M_{22}
    \end{bmatrix}
\end{align*}
as an IMM.
If $M_{22} \prec 0$, then $\Psi(z)$ is a singleton for every $z \in \R^r$.
\end{lemma}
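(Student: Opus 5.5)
The plan is to apply the IMM condition \eqref{incremental_multiplier_set_valued} with the same base point $z^{(1)} = z^{(2)} = z$. Two elements of the same set $\Psi(z)$ will then be compared directly, and the negative definiteness of $M_{22}$ should force them to coincide. Nonemptiness is not claimed here, so the statement is read as ``$\Psi(z)$ contains at most one element.'' In the intended use, Assumption~\ref{assumption:generalized_equation} or the construction in the proof of Proposition~\ref{prop:contractivity_feasibility} provides existence, and the lemma is needed only to pin down uniqueness of the value. If a literal singleton is required, the lemma has to be stated for points $z$ where $\Psi(z) \neq \emptyset$, and I would add that caveat explicitly.

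First I fix $z \in \R^r$ and take arbitrary $g^{(1)}, g^{(2)} \in \Psi(z)$. I then invoke Definition~\ref{def:incremental_multiplier_set_valued} with $z^{(1)} = z^{(2)} = z$. The increment vector is $[0;\, g^{(1)} - g^{(2)}]$, so the quadratic form collapses to its lower-right block:
\begin{align*}
    0 \le
    \begin{bmatrix} 0 \\ g^{(1)} - g^{(2)} \end{bmatrix}^\top
    M
    \begin{bmatrix} 0 \\ g^{(1)} - g^{(2)} \end{bmatrix}
    = (g^{(1)} - g^{(2)})^\top M_{22} (g^{(1)} - g^{(2)}).
\end{align*}
The blocks $M_{11}$ and $M_{12}$ drop out entirely, which is why no hypothesis on them is needed.

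Since $M_{22} \prec 0$, the right-hand side is strictly negative unless $g^{(1)} - g^{(2)} = 0$. Hence $g^{(1)} = g^{(2)}$, and $\Psi(z)$ has at most one element. I do not expect a real technical obstacle. The only care needed is to note that the definition quantifies over all pairs of base points, including the diagonal pair $z^{(1)} = z^{(2)}$, and to state the nonemptiness caveat above.
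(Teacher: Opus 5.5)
Your proof is correct and is exactly the paper's argument: set $z^{(1)}=z^{(2)}=z$ in the IMM condition so only the $M_{22}$ block survives, and negative definiteness then forces $g^{(1)}=g^{(2)}$. Your nonemptiness caveat is also well taken. This argument, like the paper's, only shows that $\Psi(z)$ has at most one element, which is how the paper itself phrases the same fact in Lemma~\ref{lemma:well-posedness_imm}, where nonemptiness is assumed separately.
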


\proof
Fix $z \in \R^r$ and take $g^{(1)},g^{(2)} \in \Psi(z)$.
The IMM condition \eqref{incremental_multiplier_set_valued} with $z^{(1)}=z^{(2)}=z$ gives
\begin{align*}
    (g^{(1)}-g^{(2)})^\top M_{22} (g^{(1)}-g^{(2)}) \ge 0.
\end{align*}
Since $M_{22}\prec0$, this is possible only if $g^{(1)}=g^{(2)}$.
\endproof

\section{Proof of Proposition~\ref{prop:contractivity_feasibility}}
\label{appendix:proof_implicit_lure}

\proof
Let $g^{(j)} \in \Psi_t(z^{(j)})$ for $j=1,2$.
Since $\Psi_t = \alpha \Psi^\mathrm{base}_t$, we have $g^{(j)}/\alpha \in \Psi^\mathrm{base}_t(z^{(j)})$.
Thus, for each $i=1,\ldots,p$, the IMM condition for $\Psi^\mathrm{base}_t$ gives
\begin{align*}
    \begin{bmatrix}
        z^{(1)} - z^{(2)}
        \\
        g^{(1)} - g^{(2)}
    \end{bmatrix}^{\!\top}
    \begin{bmatrix}
        M_{11}^i & \frac{1}{\alpha} M_{12}^i
        \\
        \frac{1}{\alpha} (M_{12}^i)^\top & \frac{1}{\alpha^2} M_{22}^i
    \end{bmatrix}
    \begin{bmatrix}
        z^{(1)} - z^{(2)}
        \\
        g^{(1)} - g^{(2)}
    \end{bmatrix}
    \ge 0
    ,
\end{align*}
which implies $\Psi_t$ admits the IMM
$
    \begin{bmatrix}
        M_{11}^i & \frac{1}{\alpha} M_{12}^i
        \\
        \frac{1}{\alpha} (M_{12}^i)^\top & \frac{1}{\alpha^2} M_{22}^i
    \end{bmatrix}
$.

To prove the feasibility of \eqref{contractivity_lmi:feasibility}, denote $\subscr{A}{cl} \coloneqq A-BD^{-1}C$ and choose $\bar{\lambda}_1,\ldots,\bar{\lambda}_p$ as in the statement.
Let
\begin{align*}
    \bar{M}_{11}
    \coloneqq
    \sum_{i=1}^p \bar{\lambda}_i M_{11}^i,
    \
    \bar{M}_{12}
    \coloneqq
    \sum_{i=1}^p \bar{\lambda}_i M_{12}^i,
    \
    \bar{M}_{22}
    \coloneqq
    \sum_{i=1}^p \bar{\lambda}_i M_{22}^i.
\end{align*}
Since $D$ is invertible, $D^\top \bar{M}_{22} D \prec 0$.
Since $\rho(\subscr{A}{cl})<\eta$, we know $\rho(\subscr{A}{cl}/\eta)<1$ so that there exists $P_\mathrm{nom}\succ 0$ with $(\subscr{A}{cl}/\eta)^\top P_\mathrm{nom}(\subscr{A}{cl}/\eta) - P_\mathrm{nom}\prec 0$, that is, $\subscr{A}{cl}^\top P_\mathrm{nom}\subscr{A}{cl} - \eta^2 P_\mathrm{nom}\prec 0$.
Let
\begin{align*}
    T
    \coloneqq
    \begin{bmatrix}
        I_{\nstate} & 0
        \\
        -D^{-1} C & I_{\ninput}
    \end{bmatrix},
\end{align*}
which is nonsingular.
Set $P = P_\mathrm{nom}$ and $\lambda_i = \alpha \bar{\lambda}_i$ for $i=1,\dots,p$.
After applying the congruence transformation\footnote{
    which post-multiplies \eqref{contractivity_lmi:feasibility} by $T$ and pre-multiplies it by $T^\top$, yielding an equivalent inequality.
} by $T$, the inequality \eqref{contractivity_lmi:feasibility} can be written as
\begin{align}
    \label{contractivity_lmi:scaled_proof}
    \begin{aligned}
    \begin{bmatrix}
        N_1 & A_{12}
        \\
        A_{12}^\top & A_{22}
    \end{bmatrix}
    &+
    \alpha
    \begin{bmatrix}
        B_{11} & B_{12}
        \\
        B_{12}^\top & B_{22}
    \end{bmatrix}
    +
    \frac{1}{\alpha}
    \begin{bmatrix}
        0 & 0 \\
        0 & N_2
    \end{bmatrix}
    \prec 0
    ,
    \end{aligned}
\end{align}
where
$
    N_1
    \coloneqq
    \subscr{A}{cl}^\top P_\mathrm{nom}\subscr{A}{cl}
    -
    \eta^2 P_\mathrm{nom}
    \prec 0,
$
$
    N_2 \coloneqq D^\top \bar{M}_{22} D
    ,
$
and $A_{12}$, $A_{22}$, $B_{11}$, $B_{12}$, and $B_{22}$ are appropriate matrices of compatible dimensions, with $A_{22}$, $B_{11}$, and $B_{22}$ being symmetric.
Lemma~\ref{lemma:feasibility_proof} implies that there exists a positive scalar $\alpha > 0$ such that \eqref{contractivity_lmi:scaled_proof} is negative definite.
Since $T$ is nonsingular, the inequality \eqref{contractivity_lmi:feasibility} holds.
\endproof

\section{Proof of Well-Posedness}
\label{appendix:proof_well-posedness}

\subsection{Supplementary Lemma: Well-posedness via IMM}

\begin{lemma}[Well-posedness via IMM]
\label{lemma:well-posedness_imm}
Consider the generalized equation \eqref{generalized_equation}, whose $\Psi_t$ admits the IMMs $M_1, \dots, M_p \in \R^{(r+\ninput)\times(r+\ninput)}$.
Suppose $D$ is invertible and there exist nonnegative scalars $\bar{\lambda}_1, \dots, \bar{\lambda}_p$ such that $\bar{M}_{22} \prec 0$, where
\begin{align}
    \label{condition:IMM_summation_bar}
    \begin{bmatrix}
        \bar{M}_{11} & \bar{M}_{12} \\
        \bar{M}_{12}^\top & \bar{M}_{22}
    \end{bmatrix}
    \coloneqq
    \sum_{i=1}^p \bar{\lambda}_i M_i
\end{align}
and $\bar{M}_{11} \in \R^{r \times r}$, $\bar{M}_{12} \in \R^{r \times \ninput}$, and $\bar{M}_{22} \in \R^{\ninput \times \ninput}$.
Then, for each $t\in\timeset$ and each $z\in\R^r$, the set $\Psi_t(z)$ contains at most one element.
Moreover, suppose $\Psi_t(z)$ is nonempty for each $t$ and each $z$ and identify $\Psi_t$ as a function from $\R^r$ to $\R^\ninput$.
Then $\Psi_t$ is Lipschitz continuous with respect to $z$, uniformly in $t$.
Furthermore, if there exist a positive scalar $\mu$, a positive definite matrix $W$, and nonnegative scalars $\lambda_1, \dots, \lambda_p$ such that
\begin{align}
    \label{condition:strong_monotonicity}
    \begin{bmatrix}
        2(1-\mu) W & W \\ W & 0
    \end{bmatrix}
    \succeq
    \begin{bmatrix}
        G & 0 \\ 0 & D
    \end{bmatrix}^\top
    \Big(
        \sum_{i=1}^{p} \lambda_i
        M_i
    \Big)
    \begin{bmatrix}
        G & 0 \\ 0 & D
    \end{bmatrix}
    ,
\end{align}
then Assumption~\ref{assumption:generalized_equation} holds.
\end{lemma}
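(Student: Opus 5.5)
The plan is to prove the three claims in order, each building on the previous one.

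\emph{At most one element.} Since each $M_i$ is an IMM for $\Psi_t$ and the $\bar{\lambda}_i$ are nonnegative, summing the IMM conditions \eqref{incremental_multiplier_set_valued} shows that the matrix in \eqref{condition:IMM_summation_bar} is again an IMM for $\Psi_t$. Its lower-right block satisfies $\bar{M}_{22} \prec 0$. Lemma~\ref{lemma:singleton} then gives that $\Psi_t(z)$ has at most one element. (That lemma states "singleton", but its proof only uses two elements $g^{(1)},g^{(2)}$ and so shows at most one element.)

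\emph{Lipschitz continuity.} Now assume nonemptiness, so $\Psi_t$ is a function. Write $\Delta z = z^{(1)}-z^{(2)}$ and $\Delta g = \Psi_t(z^{(1)})-\Psi_t(z^{(2)})$. Let $c>0$ satisfy $\bar{M}_{22} \preceq -cI$. The aggregated IMM condition gives
\[
c\|\Delta g\|^2 \le \|\bar{M}_{11}\|\|\Delta z\|^2 + 2\|\bar{M}_{12}\|\|\Delta z\|\|\Delta g\|.
\]
Solving this quadratic inequality in $\|\Delta g\|$ yields $\|\Delta g\| \le K\|\Delta z\|$ with
\[
K = \bigl(\|\bar{M}_{12}\| + \sqrt{\|\bar{M}_{12}\|^2 + c\|\bar{M}_{11}\|}\bigr)/c .
\]
The IMMs do not depend on $t$, so $K$ is uniform in $t$.

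\emph{Existence and uniqueness.} Fix $x$ and $t$. Define $\Phi(u) \coloneqq u + D^{-1}\Psi_t(Fx+Gu)$ and $b \coloneqq -D^{-1}Cx$. Then \eqref{generalized_equation} is equivalent to $\Phi(u)=b$.

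For $u^{(1)},u^{(2)}$, set $\Delta u = u^{(1)}-u^{(2)}$ and $w = D^{-1}\Delta g$, where $\Delta g$ is the increment of $\Psi_t$ at $z^{(j)}=Fx+Gu^{(j)}$. Then $\Delta z = G\Delta u$ and $\Delta g = Dw$, so
\[
\begin{bmatrix} \Delta z \\ \Delta g \end{bmatrix}
= \begin{bmatrix} G & 0 \\ 0 & D \end{bmatrix}
\begin{bmatrix} \Delta u \\ w \end{bmatrix}.
\]
Pre- and post-multiplying \eqref{condition:strong_monotonicity} by $[\Delta u; w]$ and using the IMM condition with weights $\lambda_i$ makes the right-hand side nonnegative. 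Hence
\[
2(1-\mu)\Delta u^\top W\Delta u + 2\Delta u^\top W w \ge 0,
\]
that is, $\Delta u^\top W(\Phi(u^{(1)})-\Phi(u^{(2)})) \ge \mu\|\Delta u\|_W^2$. So $\Phi$ is $\mu$-strongly monotone in the $W$-inner product.

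By the Lipschitz step, $\Phi$ is also Lipschitz, with some constant $\ell$ in $\|\cdot\|_W$. Consider $S(u) \coloneqq u - \tau(\Phi(u)-b)$. Then
\[
\|S(u^{(1)})-S(u^{(2)})\|_W^2 \le (1-2\tau\mu+\tau^2\ell^2)\|\Delta u\|_W^2,
\]
so $S$ is a contraction for $0<\tau<2\mu/\ell^2$. Banach's fixed-point theorem gives a unique fixed point of $S$, which is exactly the unique solution of \eqref{generalized_equation}. Uniqueness also follows directly from strong monotonicity.

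The main obstacle is this last step: choosing the change of variables $w=D^{-1}\Delta g$ so that \eqref{condition:strong_monotonicity} becomes strong monotonicity of $\Phi$ in the $W$-metric. Existence then relies on combining that monotonicity with the Lipschitz bound from the previous step through the contraction argument.
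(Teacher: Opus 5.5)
Your proof is correct. Your first step matches the paper's. Your Lipschitz step differs only cosmetically: you bound the cross term by norms and solve a scalar quadratic inequality, whereas the paper completes the square with $S=\bar M_{11}-\bar M_{12}\bar M_{22}^{-1}\bar M_{12}^\top$ and uses the triangle inequality. Your reduction of \eqref{condition:strong_monotonicity} to $\mu$-strong monotonicity of $u\mapsto u+D^{-1}\Psi_t(Fx+Gu)$ in the $W$-metric, via $w=D^{-1}\Delta g$, is exactly the paper's.

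The genuine difference is how you get existence. The paper stops at strong monotonicity and cites monotone-operator theory: the map is maximally monotone and strongly monotone, hence has a unique zero. You instead combine strong monotonicity with the Lipschitz bound, make the forward step $u\mapsto u-\tau(\Phi(u)-b)$ a contraction in $\|\cdot\|_W$, and apply Banach's theorem. Your route is self-contained and constructive, giving a convergent iteration with an explicit rate for computing the solution. It also shows where continuity enters. The paper's phrase ``strongly monotone, which implies \ldots\ maximally monotone'' tacitly relies on the continuity from the Lipschitz step, since strong monotonicity alone does not give maximality. It also tacitly requires working in the $W$-inner product.

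The paper's route is shorter because it leans on standard results. Both arguments need the nonemptiness hypothesis from the ``Moreover'' clause for the existence part.
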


\proof
Fix $x \in \R^{\nstate}$ and $t \in \timeset$.
By summing up the IMM conditions for $\Psi_t$ defined by $M_1, \dots, M_p$ with weights $\bar{\lambda}_1, \dots, \bar{\lambda}_p$, we have
\begin{align}
    \label{incremental_multiplier_sum}
    \begin{bmatrix}
        z^{(1)} - z^{(2)} \\ g^{(1)} - g^{(2)}
    \end{bmatrix}^\top
    \begin{bmatrix}
        \bar{M}_{11} & \bar{M}_{12} \\ \bar{M}_{12}^\top & \bar{M}_{22}
    \end{bmatrix}
    \begin{bmatrix}
        z^{(1)} - z^{(2)} \\ g^{(1)} - g^{(2)}
    \end{bmatrix}
    \ge 0
\end{align}
for all $z^{(1)}, z^{(2)} \in \R^r$ and $g^{(1)} \in \Psi_t(z^{(1)})$, $g^{(2)} \in \Psi_t(z^{(2)})$.
By taking $z^{(1)}=z^{(2)}=z$, the IMM condition \eqref{incremental_multiplier_sum} implies
$
    (g^{(1)}-g^{(2)})^\top \bar M_{22} (g^{(1)}-g^{(2)}) \ge 0
$
for all $g^{(1)}, g^{(2)} \in \Psi_t(z)$, and hence $g^{(1)}=g^{(2)}$ since $\bar M_{22} \prec 0$.
Therefore, $\Psi_t(z)$ contains at most one element at each $z\in\R^r$.

To show the Lipschitzness, denote $\varDelta z \coloneqq z^{(1)}-z^{(2)}$, $\varDelta g \coloneqq \Psi_t(z^{(1)})-\Psi_t(z^{(2)})$, and $S \coloneqq \bar M_{11}-\bar M_{12}\bar M_{22}^{-1}\bar M_{12}^\top$.
The IMM condition \eqref{incremental_multiplier_sum} can be rewritten as
\begin{align*}
    \varDelta z^\top
    S
    \varDelta z
    \geq
    \| \varDelta g + \bar M_{22}^{-1}\bar M_{12}^\top \varDelta z \|_{- \bar M_{22}}^2
    ,
\end{align*}
which in turn yields
\begin{align}
    \label{inequality:IMM_norm}
    \rho(S) \|\varDelta z\|^2
    \ge
    \lambda_{\min}(-\bar M_{22})
    \|
    \varDelta g + \bar M_{22}^{-1}\bar M_{12}^\top \varDelta z
    \|^2
    ,
\end{align}
where $\lambda_{\min}(\cdot)$ denotes the minimum eigenvalue of a matrix.
Since $-\bar M_{22} \succ 0$, we have $\lambda_{\min}(-\bar M_{22}) > 0$.
Using the triangle inequality and \eqref{inequality:IMM_norm}, we obtain
\begin{align*}
    \|\varDelta g\|
    &\le \|\varDelta g + \bar M_{22}^{-1}\bar M_{12}^\top \varDelta z\| + \|\bar M_{22}^{-1}\bar M_{12}^\top \varDelta z\|
    \\
    &\le \left(\sqrt{\frac{\rho(S)}{\lambda_{\min}(-\bar M_{22})}} + \|\bar M_{22}^{-1}\bar M_{12}^\top\|\right) \|\varDelta z\|
    .
\end{align*}
This establishes the uniform Lipschitzness of $\Psi_t$.

We conclude the proof by showing the mapping
\begin{align}
    \label{residual_mapping}
    \mathcal{R}_{x,t}(u)
    \coloneqq
    u + D^{-1} \Psi_t(Fx + G u)
\end{align}
is strongly monotone, which implies that $\mathcal{R}_{x,t}$ is maximally monotone \cite[p.11]{EKR-SB:16} and the equation $\mathcal{R}_{x,t}(u)=-D^{-1}Cx$ (which is equivalent to \eqref{generalized_equation}) admits a unique solution \cite[p.20]{EKR-SB:16}.
Let $u^{(1)}, u^{(2)} \in \R^{\ninput}$ be arbitrary and let $z^{(1)} = Fx + Gu^{(1)}$ and $z^{(2)} = Fx + Gu^{(2)}$.
Let 
\begin{align*}
    Y \coloneqq
    \begin{bmatrix}
        u^{(1)} - u^{(2)} \\ D^{-1} \Psi_t(z^{(1)}) - D^{-1} \Psi_t(z^{(2)})
    \end{bmatrix}
    .
\end{align*}
By substituting $z^{(1)} = Fx + Gu^{(1)}$, $z^{(2)} = Fx + Gu^{(2)}$, $g^{(1)} = \Psi_t(z^{(1)})$, and $g^{(2)} = \Psi_t(z^{(2)})$ into \eqref{incremental_multiplier_sum}, we obtain
\begin{align}
    \label{incremental_multiplier_sum:substitution}
    Y^\top
    \begin{bmatrix}
        G & 0 \\ 0 & D
    \end{bmatrix}^\top
    \Big(
    \sum_{i=1}^p \lambda_i M_i
    \Big)
    \begin{bmatrix}
        G & 0 \\ 0 & D
    \end{bmatrix}
    Y
    \ge 0
    .
\end{align}
Upon pre-multiplying \eqref{condition:strong_monotonicity} by $Y^\top$, post-multiplying by $Y$, and taking into account \eqref{incremental_multiplier_sum:substitution}, we have
\begin{align*}
    Y^\top
    \begin{bmatrix}
        2(1-\mu) W & W \\ W & 0
    \end{bmatrix}
    Y
    \geq 0
    ,
\end{align*}
which states that the mapping $\mathcal{R}_{x,t}$ in \eqref{residual_mapping} is $\mu$-strongly monotone with respect to metric $W$, i.e.,
$
        (u^{(1)} - u^{(2)})^\top
        W
        (\mathcal{R}_{x,t}(u^{(1)}) - \mathcal{R}_{x,t}(u^{(2)}))
        \ge \mu \|u^{(1)} - u^{(2)}\|_W^2
        .
$
\endproof

\subsection{Proof of Proposition~\ref{prop:well-posedness_contraction}}

\proof
Since $\bar{M}_{12} = 0$, the lower-right part of the strict version of \eqref{contractivity_lmi_implicit} reads
\begin{align*}
    B^\top P B + G^\top \bar{M}_{11} G + D^\top \bar{M}_{22} D
    \prec 0
    .
\end{align*}
Since $B^\top P B \succeq 0$, we have $G^\top \bar{M}_{11} G + D^\top \bar{M}_{22} D \prec 0$.

We show that \eqref{condition:strong_monotonicity} holds with $W = - D^\top \bar{M}_{22} D$.
Note that $- D^\top \bar{M}_{22} D \succ 0$ since $\bar{M}_{22} \prec 0$ and $D$ is invertible.
Since $G^\top \bar{M}_{11} G \prec - D^\top \bar{M}_{22} D$, there exists $\mu \in (0,\frac12)$ such that $G^\top \bar{M}_{11} G \preceq - (1-2\mu) D^\top \bar{M}_{22} D$.
Thus, by denoting $W = - D^\top \bar{M}_{22} D$, we have
\begin{align*}
    \begin{bmatrix}
        (1-2\mu) W - G^\top \bar{M}_{11} G & 0 \\ 0 & 0
    \end{bmatrix}
    +
    \begin{bmatrix}
        W & W \\ W & W
    \end{bmatrix}
    \succeq 0
    ,
\end{align*}
which is exactly \eqref{condition:strong_monotonicity} with $\lambda_i = \bar{\lambda}_i$ for each $i=1,\ldots,p$.
Therefore, Lemma~\ref{lemma:well-posedness_imm} implies that Assumption~\ref{assumption:generalized_equation} holds.
\endproof

\section{Multipliers for Repeated Perturbations}

\begin{lemma}
\label{lemma:multiplier_repeated_structure}
Let $\Psi_t: \R^{H\ninput} \to \mathcal{P}(\R^{H\ninput})$ be defined by
\begin{align}
    \label{psi:repeated_structure}
    \Psi_t(U) =
    \Phi_{t+1}(u_1) \oplus \cdots \oplus \Phi_{t+H}(u_H)
    ,
\end{align}
where $U=[u_1; \ldots; u_H]$ and $\Phi_t: \R^{\ninput} \to \mathcal{P}(\R^{\ninput})$.
Suppose that $\Phi_t$ admits the IMM $M \in \R^{2\ninput \times 2\ninput}$ for each $t \in \timeset$.
Then $\Psi_t$ admits the IMMs $\hat{M}^1,\dots, \hat{M}^H$, where
\begin{align}
    \label{multiplier:extended}
    \hat{M}^h =
    \begin{bmatrix}
        \Pi_h & 0 \\ 0 & \Pi_h
    \end{bmatrix}^\top
    M
    \begin{bmatrix}
        \Pi_h & 0 \\ 0 & \Pi_h
    \end{bmatrix}
    ,
\end{align}
where the $(k,h\ninput-h+k)$-th element of $\Pi_h \in \R^{\ninput \times H\ninput}$ is $1$ for each $k=1,\dots,\ninput$ and all other elements are $0$.\footnote{
    The matrix $\Pi_1$ in Lemma~\ref{lemma:multiplier_repeated_structure} is the same as that in \S\ref{sec:problem_setup}.
}
\end{lemma}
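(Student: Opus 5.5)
Note first that $\Pi_h$ is meant to be the selection matrix that extracts the $h$-th block $u_h$ from $U$, i.e.\ $\Pi_h U = u_h$. This is consistent with the footnote identifying $\Pi_1$ with the matrix of \S\ref{sec:problem_setup}, and I will use only this property. The plan is to verify the IMM condition \eqref{incremental_multiplier_set_valued} for each $\hat{M}^h$ directly, by reducing it to the IMM condition of $\Phi_{t+h}$ on the $h$-th block.

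Fix $t$ and $h \in \{1,\dots,H\}$. Take arbitrary $U^{(1)}, U^{(2)} \in \R^{H\ninput}$ and arbitrary $g^{(1)} \in \Psi_t(U^{(1)})$, $g^{(2)} \in \Psi_t(U^{(2)})$. By the definition \eqref{psi:repeated_structure} of $\Psi_t$ as a direct sum (Cartesian product of the stacked blocks), each $g^{(j)}$ has the form $[g^{(j)}_1; \dots; g^{(j)}_H]$ with $g^{(j)}_k \in \Phi_{t+k}(u^{(j)}_k)$. In particular, $\Pi_h g^{(j)} = g^{(j)}_h \in \Phi_{t+h}(u^{(j)}_h)$ and $\Pi_h U^{(j)} = u^{(j)}_h$.

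Next, expand the quadratic form:
\begin{align*}
\begin{bmatrix} U^{(1)}-U^{(2)} \\ g^{(1)}-g^{(2)} \end{bmatrix}^\top \hat{M}^h \begin{bmatrix} U^{(1)}-U^{(2)} \\ g^{(1)}-g^{(2)} \end{bmatrix}
= \begin{bmatrix} u^{(1)}_h-u^{(2)}_h \\ g^{(1)}_h-g^{(2)}_h \end{bmatrix}^\top M \begin{bmatrix} u^{(1)}_h-u^{(2)}_h \\ g^{(1)}_h-g^{(2)}_h \end{bmatrix}.
\end{align*}
This follows from \eqref{multiplier:extended} and linearity of $\Pi_h$. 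The right-hand side is nonnegative because $\Phi_{t+h}$ admits $M$ as an IMM; here we use that this holds for every time index, in particular at $t+h$. Hence $\hat{M}^h$ is an IMM for $\Psi_t$, and this holds for each $h$.

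The only potential obstacle is notational. One must read the direct sum $\oplus$ as the set of stacked vectors whose blocks are chosen independently from each $\Phi_{t+k}(u_k)$, and confirm that the index formula for $\Pi_h$ indeed selects the $h$-th $\ninput$-block. If the stated column index is a typo for $(h-1)\ninput + k$, the argument is unchanged, since only $\Pi_h U = u_h$ is used. Everything else is a one-line substitution.
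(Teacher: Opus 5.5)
Your proof is correct and follows the paper's argument exactly. You project the increments onto the $h$-th block via $\Pi_h$, use the direct-sum structure to get $\Pi_h g^{(j)} \in \Phi_{t+h}(\Pi_h U^{(j)})$, and then invoke the IMM condition of $\Phi_{t+h}$. Your reading of $\Pi_h$ as the $h$-th block selector is also the right one: the stated column index $h\ninput - h + k$ is indeed a typo for $(h-1)\ninput + k$, since it already fails to reproduce $\Pi_1 = [I_{\ninput}\ 0]$ when $\ninput \ge 2$.
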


\proof
Let $U^{(1)}, U^{(2)} \in \R^{H\ninput}$, $G^{(1)} \in \Psi_t(U^{(1)})$, and $G^{(2)} \in \Psi_t(U^{(2)})$ be arbitrary vectors.
Define $u^{(i)}_h \coloneqq \Pi_h U^{(i)}$ and $g^{(i)}_h \coloneqq \Pi_h G^{(i)}$ for $i=1,2$ and $h=1,\dots,H$, i.e.,
\begin{align}
    \label{projection_ug}
    \begin{bmatrix}
    u_h^{(1)} - u_h^{(2)} \\ g_h^{(1)} - g_h^{(2)}
    \end{bmatrix}
    =
    \begin{bmatrix}
        \Pi_h & 0 \\ 0 & \Pi_h
    \end{bmatrix}
    \begin{bmatrix}
        U^{(1)}- U^{(2)} \\ G^{(1)} - G^{(2)}
    \end{bmatrix}
    .
\end{align}
Note that $g^{(i)}_h \in \Phi_{t+h}(u^{(i)}_h)$ for each $i=1,2$ and $h=1,\dots,H$.
Since $\Phi_t$ admits the IMM $M$ for each $t$, we have
\begin{align}
    \label{multiplier_condition:phi}
    \begin{bmatrix}
    u^{(1)}_h - u^{(2)}_h \\ g^{(1)}_h - g^{(2)}_h
    \end{bmatrix}^\top
    M
    \begin{bmatrix}
    u^{(1)}_h - u^{(2)}_h \\ g^{(1)}_h - g^{(2)}_h
    \end{bmatrix}
    \ge 0
\end{align}
for the above $u^{(1)}_h$, $u^{(2)}_h$, $g^{(1)}_h$, and $g^{(2)}_h$.
By substituting \eqref{projection_ug} into \eqref{multiplier_condition:phi}, we have the IMM condition for $\Psi_t$ with $\hat{M}^h$.
\endproof

\section{Proof of Theorems \ref{thm:contractivity_lmi:convex_smooth}--\ref{thm:lipschitz:feasibility_wellposedness}}

In the setup of the regularized MPC, the plant dynamics \eqref{plant_lti} can be reformulated as
\begin{align}
    \label{plant_lti_extended}
    x(t+1) = A x(t) + B \Pi_1 U(t)
    .
\end{align}

\subsection{Proof of Theorem~\ref{thm:contractivity_lmi:convex_smooth}}
\label{appendix:proof_convex_smooth}

\proof
The closed-loop dynamics of the regularized MPC in Case~1 can be represented as the implicit Lur'e system consisting of the plant \eqref{plant_lti_extended} and the generalized equation
\begin{align}
    \label{optimality_condition:convex_smooth}
    Cx(t) + DU(t) + \psi_t(U(t)) = 0
    ,
\end{align}
where \eqref{optimality_condition:convex_smooth} is the first-order optimality condition and
\begin{align}
    \label{psi:convex_smooth}
    \psi_t(U)
    \coloneqq
    \begin{bmatrix} \nabla_u \ellreg(u_1, t+1) \\ \vdots \\ \nabla_u \ellreg(u_H, t+H) \end{bmatrix}
    .
\end{align}
Lemma~\ref{lemma:wellposedness_subdifferential} implies that Assumption~\ref{assumption:generalized_equation} holds for \eqref{optimality_condition:convex_smooth}.
It follows from Example~\ref{example:multiplier_convex_analysis} that $\nabla_u \ellreg^\mathrm{base}(\cdot, t)$ admits $M_\mathrm{cnv} \coloneqq \begin{bmatrix} 0 & I_{\ninput} \\ I_{\ninput} & -\frac{2}{L} I_{\ninput} \end{bmatrix}$ as an IMM.
By Lemma~\ref{lemma:multiplier_repeated_structure}, $\psi_t$ in \eqref{psi:convex_smooth} admits the IMM
$\hat{M}^1, \dots, \hat{M}^H$ in the form of \eqref{multiplier:extended} with $M = M_\mathrm{cnv}$.
Straightforward calculations show that
\begin{align*}
    \sum_{h=1}^H \lambda_h \hat{M}^h
    =
    \begin{bmatrix}
        0 & \Lambda \\ \Lambda & -\frac{2}{L} \Lambda
    \end{bmatrix}
    .
\end{align*}
Therefore, by Corollary~\ref{cor:implicit_lure_contraction} with $F=0$ and $G=I_{H\ninput}$, \eqref{contractivity_lmi:convex_smooth} implies the contractivity of the closed-loop system.

Positive scalars $\lambda_1, \dots, \lambda_H$ provide $\bar{M}_{22} = -\frac{2}{L}\Lambda \prec 0$.
Proposition~\ref{prop:contractivity_feasibility} implies that, if the input is determined by the solution of the generalized equation
$
    Cx(t) + DU(t) + \alpha \psi_t(U(t)) = 0
$
with a positive scalar $\alpha$, then there exist a positive definite matrix $P$, nonnegative scalars $\lambda_1, \dots, \lambda_H$, and a positive scalar $\alpha$ satisfying
\begin{align*}
    \begin{aligned}
    &
    \begin{bmatrix}
        A^\top P A - \eta^2 P & A^\top P B \Pi_1 \\
        \Pi_1^\top B^\top P A & \Pi_1^\top B^\top P B \Pi_1
    \end{bmatrix}
    \\
    &
    +
    \begin{bmatrix}
        0 & I_{H\ninput} \\ -C & -D
    \end{bmatrix}^\top
    \begin{bmatrix}
        0
        &
        \frac{1}{\alpha} \Lambda
        \\
        \frac{1}{\alpha} \Lambda
        &
        - \frac{2}{L \alpha^2} \Lambda
    \end{bmatrix}
    \begin{bmatrix}
        0 & I_{H\ninput} \\ -C & -D
    \end{bmatrix}
    \prec 0
    .
    \end{aligned}
\end{align*}
Therefore, \eqref{contractivity_lmi:convex_smooth} holds with $L \alpha$ in place of $L$ and $\alpha \lambda_h$ in place of $\lambda_h$ for each $h=1,\dots,H$.
\endproof

\subsection{Proof of Theorem~\ref{thm:contractivity_lmi:ccp}}
\label{appendix:proof_ccp}

\proof
The closed-loop dynamics of the regularized MPC in Case~2 can be represented as the implicit Lur'e system consisting of the plant \eqref{plant_lti_extended} and the generalized equation
\begin{align}
    \label{optimality_condition:ccp}
    0 \in Cx(t) + DU(t) + \Psi_t(U(t))
    ,
\end{align}
where \eqref{optimality_condition:ccp} is the first-order optimality condition and
\begin{align}
    \label{psi:ccp}
    \Psi_t(U)
    \coloneqq
    \partial_u \ellreg(u_1, t+1)
    \oplus \cdots \oplus
    \partial_u \ellreg(u_H, t+H)
    .
\end{align}
Lemma~\ref{lemma:wellposedness_subdifferential} implies that Assumption~\ref{assumption:generalized_equation} holds for the generalized equation \eqref{optimality_condition:ccp}.
Since $\ellreg(\cdot, t)$ is CCP, the subdifferential $\partial_u \ellreg(\cdot, t)$ is monotone \cite[p.10]{EKR-SB:16}.
According to Example~\ref{example:multiplier_monotone}, $\partial_u \ellreg(\cdot, t)$ admits the IMM $M_\mathrm{ccp} \coloneqq \begin{bmatrix} 0 & I_{\ninput} \\ I_{\ninput} & 0 \end{bmatrix}$ for each $t \in \timeset$.
By Lemma~\ref{lemma:multiplier_repeated_structure}, $\Psi_t$ in \eqref{psi:ccp} admits $\hat{M}^1, \dots, \hat{M}^H$ as IMMs in the form of \eqref{multiplier:extended} with $M = M_\mathrm{ccp}$.
Straightforward calculations show that
\begin{align*}
    \sum_{h=1}^H \lambda_h \hat{M}^h
    =
    \begin{bmatrix}
        0 & \Lambda \\ \Lambda & 0
    \end{bmatrix}
    .
\end{align*}
Therefore, by Corollary~\ref{cor:implicit_lure_contraction} with $F=0$ and $G=I_{H\ninput}$, \eqref{contractivity_lmi:ccp} implies the contractivity of the closed-loop system.
\endproof

\subsection{Proof of Theorem~\ref{thm:contractivity:lipschitz}}
\label{appendix:proof_lipschitz}

\proof
The closed-loop dynamics of the regularized MPC in Case~3 can be formulated as \eqref{plant_lti_extended} with \eqref{equation:implicit}.
Let $r \coloneqq H(\nstate+\ninput)$, $z \coloneqq [X;U] \in \R^{r}$,
$
    F \coloneqq
    \begin{bmatrix}
        \bar{A} \\ 0
    \end{bmatrix}
    \in \R^{r \times \nstate}
$, and
$
    G \coloneqq
    \begin{bmatrix}
        \bar{B} \\ I_{H\ninput}
    \end{bmatrix}
    \in \R^{r \times H\ninput}
$.
Since $\psi_t$ is Lipschitz with Lipschitz constant $L$, $\psi_t$ admits $M_\mathrm{lip} = \diag{I_{r}, -\frac{1}{L^2} I_{H\ninput}}$ as an IMM according to Example~\ref{example:multiplier_lipschitz}.
Since \eqref{equation:implicit} is the first-order optimality condition of the OCP \eqref{ocp}, Assumption~\ref{assumption:lipschitz:well-posedness} implies that Assumption~\ref{assumption:generalized_equation} holds for the generalized equation \eqref{equation:implicit}.
By Corollary~\ref{cor:implicit_lure_contraction} with $p=1$ and $\lambda_1=1$, we see that \eqref{contractivity_lmi:lipschitz} implies the contractivity.
\endproof

\subsection{Proof of Theorem~\ref{thm:lipschitz:feasibility_wellposedness}}
\label{appendix:proof_lipschitz:feasibility_wellposedness}

\proof
Let $r \coloneqq H(\nstate+\ninput)$, $z \coloneqq [X;U] \in \R^{r}$,
$
    F \coloneqq
    \begin{bmatrix}
        \bar{A} \\ 0
    \end{bmatrix}
    \in \R^{r \times \nstate}
$, and
$
    G \coloneqq
    \begin{bmatrix}
        \bar{B} \\ I_{H\ninput}
    \end{bmatrix}
    \in \R^{r \times H\ninput}
$.
Since $\psi_t$ in \eqref{equation:implicit} is Lipschitz with Lipschitz constant $L$, $\psi_t$ admits the IMM $M_\mathrm{lip} \coloneqq \diag{I_{r}, -\frac{1}{L^2} I_{H\ninput}}$ according to Example~\ref{example:multiplier_lipschitz}.
Since $\bar{M}_{22} = -I_{H\ninput} \prec 0$ with $\bar{\lambda}_1 = 1$, Proposition~\ref{prop:contractivity_feasibility} implies that, if the input is determined by the solution of the generalized equation
$
    Cx + DU + \alpha \psi_t(Fx + GU) = 0
$
with a positive scalar $\alpha$, then there exist a positive definite matrix $P$, nonnegative scalars $\lambda_1, \dots, \lambda_H$, and a positive scalar $\alpha$ such that
\begin{align*}
    \begin{aligned}
    &
    \begin{bmatrix}
        A^\top P A - \eta^2 P & A^\top P B \Pi_1 \\
        \Pi_1^\top B^\top P A & \Pi_1^\top B^\top P B \Pi_1
    \end{bmatrix}
    \\
    &
    +
    \begin{bmatrix}
        \bar{A} & \bar{B}
        \\
        0 & I_{H\ninput}
        \\
        -C & -D
    \end{bmatrix}^\top
    \begin{bmatrix}
    \lambda_1 I_{r} & 0
    \\
    0 & - \frac{\lambda_1}{L^2 \alpha^2} I_{H\ninput}
    \end{bmatrix}
    \begin{bmatrix}
        \bar{A} & \bar{B}
        \\
        0 & I_{H\ninput}
        \\
        -C & -D
    \end{bmatrix}
    \preceq 0
    .
    \end{aligned}
\end{align*}
Here, $\lambda_1 \neq 0$; otherwise, the lower-right block cannot be negative definite.
Therefore, the strict version of \eqref{contractivity_lmi:lipschitz} holds with $P/\lambda_1$ in place of $P$ and $L^{-2} \alpha^{-2}$ in place of $\gamma$.
Since $\bar{M}_{12} = 0$ and $\bar{M}_{22} = -\lambda_1 I_{H\ninput} \prec 0$, Proposition~\ref{prop:well-posedness_contraction} implies that the generalized equation is well-posed, which in turn implies that Assumption~\ref{assumption:lipschitz:well-posedness} holds.
\endproof

\section{Details on Comparison in Remark \ref{remark:novelty:ccp}}
\label{appendix:comparison_multipliers}

One comparable result to Theorem~\ref{thm:contractivity_lmi:ccp} is \cite[Theorem 1]{SH-GB-FD-DLMP:26}.
The paper \cite{SH-GB-FD-DLMP:26} treats game theoretic control of a linear time-invariant state-space system, where each agent minimizes their own costs under interactive input constraints.
Each optimization problem of the agents is assumed to be $\mu$-strongly convex, and the input constraints are assumed to be convex, closed, and nonempty.
The main result \cite[Theorem 1]{SH-GB-FD-DLMP:26} shows that it admits a globally asymptotically stable Nash equilibrium if a dissipativity-based LMI holds.

The generalized Nash equilibrium is defined as
\begin{align}
    \label{gne}
    0 \in F_u (U) + C x + N_\mathcal{U}(U)
    ,
\end{align}
where $F_u$ is a $\mu$-strongly monotone function, $C$ is a matrix, and $N_\mathcal{U}$ is the normal cone of $\mathcal{U}$ at $U$.
Note that $N_\mathcal{U}$ is monotone since $\mathcal{U}$ is convex, closed, and nonempty.
The core observation that leads to the dissipativity-based LMI in \cite[Theorem 1]{SH-GB-FD-DLMP:26} is \cite[Proposition 2]{SH-GB-FD-DLMP:26}, which states that the solution mapping of \eqref{gne} admits the IMM
\begin{align}
    \label{multiplier:rhg}
    M_\mathrm{gne}
    =
    \begin{bmatrix}
        0 & -C^\top \\ - C & - 2 \mu I_{H\ninput}
    \end{bmatrix}
    .
\end{align}

What we would like to highlight here is that the matrix inequality \eqref{contractivity_lmi:ccp} in the proposed framework is less conservative than the LMI in \cite[Theorem 1]{SH-GB-FD-DLMP:26} in two respects.
First, the proposed implicit Lur'e analysis in \S\ref{section:contractivity:ccp} exploits the matrix $D$ explicitly.
The $\mu$-monotonicity of the objective function falls into a special case of the proposed analysis, where $D$ is chosen as $D = \mu I$.
The other thing is that Theorem~\ref{thm:contractivity_lmi:ccp} allows $\lambda$s, since in our setting the regularizer is assumed to be a sum of stage-wise regularizing costs.

\end{document}